\setlist{nolistsep,leftmargin=*}
\theoremstyle{definition} 
\newtheorem{thm}{Theorem}[section]
\newtheorem{lem}[thm]{Lemma}
\newtheorem{prop}[thm]{Proposition}
\newtheorem{cor}[thm]{Corollary}
\newtheorem{exmp}[thm]{Example}
\newtheorem{defn}[thm]{Definition}
\newtheorem*{thm*}{Theorem}
\newtheorem*{prop*}{Proposition}
\theoremstyle{remark}
\newtheorem*{rmk}{Remark}
\newcommand{\bmat}{\begin{matrix}}
\newcommand{\emat}{\end{matrix}}
\newcommand{\bpmat}{\begin{pmatrix}}
\newcommand{\epmat}{\end{pmatrix}}
\renewcommand{\ni}{\noindent}
\renewcommand{\t}{\text}
\newcommand{\ra}{\longrightarrow}
\newcommand{\xra}{\xrightarrow}
\newcommand{\mbb}{\mathbb}
\newcommand{\mc}{\mathcal}
\newcommand{\mf}{\mathfrak}
\newcommand{\tb}{\textbf}
\newcommand{\ti}{\textit}
\newcommand{\ol}{\overline}
\renewcommand{\epsilon}{\varepsilon}
\newcommand{\rlim}{\varinjlim}
\newcommand{\llim}{\varprojlim}
\newcommand{\spec}{\t{Spec }}
\renewcommand{\mod}{\t{mod-}}
\newcommand{\Mod}{\t{Mod-}}
\newcommand{\Hom}{\t{Hom}}
\renewcommand{\phi}{\varphi}
\newcommand{\hcm}[1]{\t{CohCM}(#1)}
\newcommand{\lcm}[1]{\rlim{\t{CM}(#1)}}
\newcommand{\cm}[1]{\t{CM}(#1)}
\newcommand{\hra}{\hookrightarrow}
\newcommand{\tra}{\twoheadrightarrow}
\begin{document}

\title{Two definable subcategories of maximal Cohen-Macaulay modules}

\author{Isaac Bird
\\
\small{School of Mathematics, University of Manchester, Oxford Road, Manchester M13 9PL}
\\
\small{\url{isaac.bird@manchester.ac.uk}}}
\date{}

\maketitle
\begin{abstract}
Over a Cohen-Macaulay ring we consider two extensions of the maximal Cohen-Macaulay modules from the viewpoint of definable subcategories, which are closed under direct limits, direct products and pure submodules. After presenting these categories, we compare them and consider which properties they inherit from the maximal Cohen-Macaulay modules. We then consider some further properties of these classes and how they interact with the entire module category.
\end{abstract}

\section{Introduction}
\ni
Over a commutative Noetherian local ring $(R,\mf{m},k)$ of Krull dimension $d$, a finitely generated $R$-module $M$ is \ti{maximal Cohen-Macaulay} if every maximal $M$-sequence contained in $\mf{m}$ has length $d$. A classic theorem due to Rees shows that this is equivalent to asking for $\t{Ext}_{R}^{i}(k,M)=0$ for all $i<d$. These equivalences fail when the assumption of $M$ being finitely generated is removed, so there is no uniform way to define the maximal Cohen-Macaulay property beyond finitely generated $R$-modules. We investigate two possible extensions from the perspective of \ti{definable subcategories}.
\\
\\
A definable subcategory is a class of modules that is closed under pure submodules, direct limits and direct products. These classes are in bijection with the closed sets of a topological space, called the Ziegler spectrum, whose underlying set is the set of isomorphism classes of indecomposable pure-injective modules. Given any class of modules, there is a smallest definable subcategory containing it, which then corresponds to a closed subset of the Ziegler spectrum. Understanding the closed set corresponding to the class of maximal Cohen-Macaulay modules provides one motivation for this approach. 
\\
\\
By considering the Ext-depth of a module, as described in \ref{strook}, we consider the category
\begin{equation}\label{cohcm}
\hcm{R}:=\{M\in R\t{-Mod} :\t{Ext}_{R}^{i}(k,M)=0 \t{ for all $i<d$}\}
\end{equation}
consisting of all $R$-modules whose Ext-depth is at least $d$. This is a definable subcategory and it is clear that its finitely generated modules are precisely the maximal Cohen-Macaulay modules. However, it is not clear that this is the smallest definable subcategory with this property. 
\\
\\
Since definable subcategories are closed under direct limits, the class $\lcm{R}$, which consists of all modules that can be obtained as direct limits of systems of maximal Cohen-Macaulay modules, will be contained inside the class defined in (\ref{cohcm}). However, $\lcm{R}$ is not necessarily always a definable class, since it need not be closed under products. In \cite[Theorem B]{holm}, H. Holm showed that whenever $R$ admits a canonical module $\Omega$, the class $\lcm{R}$ is a definable subcategory, and is the smallest definable subcategory extending the maximal Cohen-Macaulay modules, as well as providing several equivalent characterisations of the modules in this class. Using Holm's characterisations, we are able to determine when the categories $\hcm{R}$ and $\lcm{R}$ coincide.

\begin{thm*}[\ref{sep}]
Let $R$ be a Cohen-Macaulay ring. If $\t{dim}\,R$=1, then $\hcm{R}=\rlim \cm{R}$. Moreover, if $R$ admits a canonical module $\hcm{R}=\lcm{R}$ if and only if $\t{dim}\,R=1$.
\end{thm*}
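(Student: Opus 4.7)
The inclusion $\rlim\cm R\subseteq\hcm R$ holds for every Cohen--Macaulay local ring, since Rees's theorem gives $\cm R\subseteq\hcm R$ and $\hcm R$ is closed under direct limits. The theorem therefore reduces to proving the reverse inclusion when $d=1$, and exhibiting a counterexample when $d\geq 2$.

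When $d=1$, the condition $M\in\hcm R$ simply says $\Hom_R(k,M)=0$, and this passes to every finitely generated submodule $N\subseteq M$. In a one-dimensional Cohen--Macaulay local ring any nonzero finitely generated module with $\Hom_R(k,N)=0$ has $\t{depth}\,N=1=\dim R$, and hence is maximal Cohen--Macaulay. Writing $M$ as the direct limit of its finitely generated submodules then shows $M\in\rlim\cm R$. When a canonical module exists, Holm's theorem identifies $\rlim\cm R=\lcm R$, which gives the ``if'' direction of the second statement.

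For the converse, I propose to show that in any dimension $d\geq 2$ the indecomposable injective $E:=E_R(R/\mf p)$, for any prime $\mf p$ with $1\leq\t{ht}(\mf p)\leq d-1$, lies in $\hcm R\setminus\rlim\cm R$. That $E\in\hcm R$ is immediate: $\t{Ext}_R^i(k,E)=0$ for $i\geq 1$ by injectivity, while any nonzero map $k\to E$ would produce a submodule of $E$ with associated prime $\mf m$, contradicting $\t{Ass}(E)=\{\mf p\}$. To exclude $E$ from $\rlim\cm R$, I use that in a Cohen--Macaulay ring every $R$-regular element $x\in\mf m$ is part of a system of parameters and is therefore $M$-regular for every maximal Cohen--Macaulay $M$; since $x$-torsion-freeness is preserved by filtered colimits, every module in $\rlim\cm R$ must be $x$-torsion-free. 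But $\mf p$ has positive height and so contains some $R$-regular $x$, because $\mf p$ avoids the minimal primes (which coincide with the associated primes of the Cohen--Macaulay ring $R$), and this $x$ annihilates the submodule $R/\mf p\hookrightarrow E$, producing $x$-torsion in $E$ and excluding it from $\rlim\cm R$.

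The technical verifications are mild: filtered colimits of $x$-torsion-free modules are $x$-torsion-free by the concrete description of colimit elements, and $R$-regular elements of $\mf m$ are $M$-regular for MCM $M$ because $R$-regular elements are parameters in a Cohen--Macaulay ring and systems of parameters act regularly on MCM modules. The main conceptual input is identifying $E_R(R/\mf p)$ as the natural witness separating the two classes once $d\geq 2$, with $x$-torsion-freeness for $R$-regular $x\in\mf m$ as the definable invariant that the MCM class enforces but which this injective violates.
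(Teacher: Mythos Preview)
Your proof is correct and follows the same overall strategy as the paper: the $d=1$ case is handled identically (write $M$ as the filtered union of its finitely generated submodules, each of which lies in $\cm R$ because $\hcm R$ is closed under submodules in dimension one), and the separating object in dimension $\geq 2$ is the same indecomposable injective $E(R/\mf p)$ for a non-maximal prime of positive height, shown to lie in $\hcm R$ by the same associated-prime argument.

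The one difference worth recording is how you exclude $E$ from $\rlim\cm R$. The paper invokes Holm's characterization \ref{holm} (which requires the canonical module) to say that membership in $\rlim\cm R$ forces every system of parameters to be a weak $E$-sequence, and then exhibits a regular $x\in\mf p$ killing a nonzero element of $E$. You bypass Holm entirely: you observe directly that $x$-torsion-freeness for a fixed $R$-regular $x$ passes from $\cm R$ to its direct-limit closure, and then produce the same torsion. This is a mild but genuine simplification --- your exclusion argument works over any Cohen--Macaulay local ring, so you have in fact proved the slightly stronger statement that $\hcm R\neq\rlim\cm R$ whenever $\dim R\geq 2$, with no hypothesis on the existence of $\Omega$.
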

\ni
In assuming the existence of a canonical module, we can consider the canonical duality $\Hom_{R}(-,\Omega)$. This functor plays a fundamental role in understanding the category of maximal Cohen-Macaulay $R$-modules, $\cm{R}$, as illustrated in the monographs \cite{lw} and \cite{yosh}. Since both $\hcm{R}$ and $\lcm{R}$ are extensions of $\cm{R}$, we aim to see which, if any, properties of this functor extend to these categories from $\cm{R}$. In particular, we show the following.

\begin{thm*}[\ref{hom},\ref{hom2}]
If $R$ is a Cohen-Macaulay ring admitting a canonical module $\Omega$, then $\Hom_{R}(-,\Omega)$ is an endofunctor on both $\rlim \cm{R}$ and $\hcm{R}$. Moreover, $\Omega$ is an injective object in both these categories.
\end{thm*}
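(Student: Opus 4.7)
The strategy is to leverage the classical fact that $\Hom_{R}(-,\Omega)$ restricts to a duality on $\cm{R}$ and extend this to each of the larger categories using their structural descriptions: Holm's equivalent characterisations from \cite{holm} for $\lcm{R}$, and the Ext-depth/Koszul description for $\hcm{R}$. The injectivity claims are then handled by a dimension induction.

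For the endofunctor property on $\lcm{R}$, given $M = \rlim_{\lambda} M_{\lambda}$ with $M_{\lambda} \in \cm{R}$, the dual becomes $\Hom_{R}(M,\Omega) \cong \llim_{\lambda} \Hom_{R}(M_{\lambda},\Omega)$, an inverse limit of maximal Cohen-Macaulay modules. The plan is to take the most convenient of Holm's characterisations -- these give cohomological vanishing conditions involving $\Omega$ -- and verify it for the inverse limit by combining the known vanishing on each $\Hom_{R}(M_{\lambda},\Omega) \in \cm{R}$ with the closure of $\lcm{R}$ under direct products (which it inherits from being definable).

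For $\hcm{R}$, fix a system of parameters $\underline{x} = x_{1},\ldots,x_{d}$, which is automatically $R$-regular. By the equality of Ext-depth with respect to equivalent $\mf{m}$-primary ideals and Koszul self-duality, membership in $\hcm{R}$ is the vanishing $\t{Tor}_{j}^{R}(R/\underline{x},M) = 0$ for all $j > 0$. Because $K(\underline{x})$ is built from finite rank free modules one has the natural complex-level identification $K(\underline{x}) \otimes_{R} \Hom_{R}(M,\Omega) \cong \Hom_{R}(M, K(\underline{x}) \otimes_{R} \Omega)$; since $\underline{x}$ is $\Omega$-regular, $K(\underline{x}) \otimes_{R} \Omega$ is a resolution of $\Omega/\underline{x}\Omega$. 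A hypercohomology spectral sequence, together with the change-of-rings identity $\t{Ext}_{R}^{j}(M, \Omega/\underline{x}\Omega) \cong \t{Ext}_{R/\underline{x}}^{j}(M/\underline{x}M, \Omega/\underline{x}\Omega)$ and the injectivity of $\Omega/\underline{x}\Omega$ as the canonical module of the Artinian ring $R/\underline{x}$, then forces the higher Tors of $\Hom_{R}(M,\Omega)$ against $R/\underline{x}$ to vanish, placing $\Hom_{R}(M,\Omega)$ in $\hcm{R}$.

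For the injectivity of $\Omega$ I would induct on $d = \t{dim}\,R$. The base $d = 0$ is immediate, as $\Omega$ is then the injective hull of $k$. For $d \geq 1$ pick $x \in \mf{m}$ that is $R$-regular, set $R' = R/xR$ and $\Omega' = \Omega/x\Omega$; a Rees-type calculation gives $M/xM \in \hcm{R'}$ for $M \in \hcm{R}$, so by the inductive hypothesis $\t{Ext}_{R'}^{i}(M/xM, \Omega') = 0$ for $i > 0$, and change of rings upgrades this to $\t{Ext}_{R}^{i}(M, \Omega/x\Omega) = 0$. The long exact sequence of $\t{Ext}_{R}^{*}(M,-)$ attached to $0 \to \Omega \xrightarrow{x} \Omega \to \Omega/x\Omega \to 0$ then forces $x$ to act surjectively on $\t{Ext}_{R}^{i}(M,\Omega)$ for $i \geq 1$, and running this across a full system of parameters exhibits each $\t{Ext}_{R}^{i}(M,\Omega)$ as an $\mf{m}$-divisible module. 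The main obstacle is promoting this divisibility to actual vanishing: I anticipate either a Nakayama-type finiteness input specific to modules in $\hcm{R}$, or a direct splitting argument -- splitting a candidate extension $0 \to \Omega \to E \to M \to 0$ modulo $\underline{x}$ (where $\Omega/\underline{x}\Omega$ is injective over the Artinian $R/\underline{x}$) and lifting the splitting via an inverse-system or completion-style construction. Once injectivity is established on $\hcm{R}$ it descends to $\lcm{R}$ via the inclusion $\lcm{R} \subseteq \hcm{R}$.
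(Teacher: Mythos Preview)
Your approach diverges substantially from the paper's, and several steps have genuine gaps.

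\textbf{The central problem is your treatment of $\hcm{R}$.} Both the Rees-type step ``$M/xM\in\hcm{R'}$'' and the change-of-rings identity $\t{Ext}_{R}^{i}(M,\Omega/x\Omega)\cong\t{Ext}_{R'}^{i}(M/xM,\Omega')$ presuppose that a parameter $x$ is a non-zerodivisor on $M$. But this is exactly the property that \emph{fails} for modules in $\hcm{R}\setminus\lcm{R}$: the paper's own separating example (Theorem~\ref{sep}) is $E(R/\mf{p})$ for a height-one prime $\mf{p}$, on which any $x\in\mf{p}$ is a zerodivisor. So your induction collapses precisely on the modules that distinguish $\hcm{R}$ from $\lcm{R}$. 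The same issue infects your Koszul/hypercohomology argument for the endofunctor property: the spectral sequence comparing $\t{Tor}_{*}(R/\underline{x},\Hom_{R}(M,\Omega))$ with $\t{Ext}_{R}^{*}(M,\Omega/\underline{x}\Omega)$ only collapses once you already know $\t{Ext}_{R}^{i}(M,\Omega)=0$ for $i>0$, so the two claims are circular as presented. And your acknowledged obstacle---promoting $\mf{m}$-divisibility of $\t{Ext}_{R}^{i}(M,\Omega)$ to vanishing---is real: there is no Nakayama-type input available, since $M$ is not finitely generated.

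\textbf{For $\lcm{R}$}, writing $\Hom_{R}(M,\Omega)$ as an inverse limit of Cohen--Macaulay modules is fine, but closure under direct products does \emph{not} give closure under inverse limits; indeed the paper later shows that $\hcm{R}$ is generally not closed under inverse limits. One can show $\llim\cm{R}\subseteq\lcm{R}$ over a complete ring via Matlis duality (this is done later in the paper), but that is a different argument from the one you sketch.

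\textbf{How the paper proceeds instead.} Both propositions assume $R$ is complete and use Grothendieck local duality $\t{Ext}_{R}^{i}(M,\Omega)\simeq H_{\mf{m}}^{d-i}(M)^{\vee}$ as the organising tool. Injectivity of $\Omega$ in $\hcm{R}$ is then immediate: $H_{\mf{m}}^{d-i}(M)=0$ for $i>0$ by definition of $\hcm{R}$. For the endofunctor property on $\hcm{R}$ the paper runs a Grothendieck spectral sequence for $\t{Tor}_{*}(k,-\otimes H_{\mf{m}}^{d}(R))$, using that $H_{\mf{m}}^{d}(-)\simeq -\otimes H_{\mf{m}}^{d}(R)$ and that $\t{Tor}_{i}(k,H_{\mf{m}}^{d}(R))$ is concentrated in degree $d$; a shorter alternative (also recorded in the paper) uses Yassemi's formula $\t{E-dp}\,\Hom_{R}(M,\Omega)=\t{T-codp}\,M+d$. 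For $\lcm{R}$ the paper again passes through local duality and the direct-limit description, together with pure-injectivity of $\Omega$ over a complete ring to pull $\t{Ext}$ through the inverse limit.
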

\ni
One can partition both $\lcm{R}$ and $\hcm{R}$ into the modules of finite and infinite Ext-depth. In the case of $\lcm{R}$, it is clear from Holm's characterisation that the modules in $\lcm{R}$ whose Ext-depth is finite are precisely Hochster's balanced big Cohen-Macaulay modules, see \cite[Ch. 8]{bh}. We then consider how the canonical dual acts on this category. The modules of infinite depth are of interest in their own right, as they form a definable subcategory that contains no finitely generated modules but is still very much related to $\cm{R}$. We provide an in-depth example by considering the one-dimensional $A_{\infty}$ singularity.
\\
\\
We then consider some of the properties of both $\hcm{R}$ and $\lcm{R}$, both categorical and homological. Since both these categories are definable, they are already covering and preenveloping. However, we are able to improve on this by replicating Holm's result \cite[Theorem D]{holm} to show the following:

\begin{thm*}[\ref{cotorsion}]
Let $R$ be a Cohen-Macaulay ring. Then $(\hcm{R},\hcm{R}^{\perp})$ is a perfect hereditary cotorsion pair in $\Mod{R}$.
\end{thm*}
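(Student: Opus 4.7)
The plan is to mirror Holm's proof of \cite[Theorem D]{holm}, now applied to $\hcm{R}$ rather than $\lcm{R}$. The argument splits into three ingredients: (i) showing $\hcm{R}$ is resolving, so the pair is automatically hereditary; (ii) showing $(\hcm{R}, \hcm{R}^{\perp})$ is a complete cotorsion pair via the Eklof--Trlifaj small-object argument applied to a suitable generating set; (iii) upgrading completeness to perfection using that $\hcm{R}$ is direct-limit closed.

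For (i), I would apply $\Hom_{R}(k,-)$ to a short exact sequence $0\to A'\to A\to A''\to 0$: the long exact sequence of $\t{Ext}_{R}^{*}(k,-)$ together with standard dimension shifts shows that $\hcm{R}$ is closed under extensions and kernels of epimorphisms. Containment of $R$ follows from $\t{depth}(R)=d$, which holds since $R$ is Cohen-Macaulay of dimension $d$; combined with closure under direct sums, this places every projective $R$-module in $\hcm{R}$. Hereditariness then falls out by dimension shifting: the syzygies $\Omega^{i}A$ of any $A\in \hcm{R}$ lie in $\hcm{R}$, so $\t{Ext}_{R}^{i+1}(A,B)=\t{Ext}_{R}^{1}(\Omega^{i}A,B)=0$ for all $i\geq 0$ and $B\in \hcm{R}^{\perp}$.

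The main obstacle will be (ii): producing a set $\mathcal{S}\subseteq \hcm{R}$ with $\mathcal{S}^{\perp}=\hcm{R}^{\perp}$, so that Eklof--Trlifaj yields a complete cotorsion pair $({}^{\perp}(\mathcal{S}^{\perp}),\mathcal{S}^{\perp})=(\hcm{R},\hcm{R}^{\perp})$. Since $\hcm{R}$ is definable, I expect it to be deconstructible: for some cardinal $\kappa$, every $M\in \hcm{R}$ should admit a continuous well-ordered filtration with $\kappa$-presented factors in $\hcm{R}$, and I would take $\mathcal{S}$ to be a representative set of such $\kappa$-presented modules together with $R$. A delicate point is that the definition of $\hcm{R}$ includes the vanishing $\Hom_{R}(k,M)=0$, which is not a priori a cotorsion condition; however this is preserved under transfinite extensions of modules in $\hcm{R}$, since $k$ is finitely presented so $\Hom_{R}(k,-)$ is left exact and commutes with direct limits. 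Establishing deconstructibility of $\hcm{R}$ is where I expect the main technical work to lie; it should be accessible via Hill-lemma technology for classes defined by Ext-vanishing against a fixed finitely presented module.

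Finally, for (iii) I would invoke the classical result that any complete cotorsion pair whose left class is closed under direct limits is perfect. Since $\hcm{R}$ is definable, it is in particular direct-limit closed, so $\hcm{R}$ will be covering and $\hcm{R}^{\perp}$ enveloping.
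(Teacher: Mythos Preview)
Your proposal is correct and would work, but the paper takes a shorter route that bypasses the deconstructibility step you flag as the main obstacle. Rather than building filtrations via Hill-lemma technology, the paper observes that $\hcm{R}$ is a \emph{Kaplansky class} and invokes the Enochs--L\'opez-Ramos criterion \cite[Theorem 2.8]{kap} directly. The point is that definability already gives everything needed: for any cardinal $\lambda>\mathrm{card}(R)+\aleph_{0}$, every module $M$ and element $x\in M$ admit a \emph{pure} submodule $N\ni x$ with $|N|\leq\lambda$ (a L\"owenheim--Skolem-type fact), and since definable classes are closed under both pure submodules and pure quotients, both $N$ and $M/N$ stay in $\hcm{R}$. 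This one-line verification replaces your entire step (ii). Your steps (i) and (iii) match the paper's reasoning (extension closure, containment of projectives, and direct-limit closure are all either stated as clear or absorbed into the cited criterion). What your approach buys is self-containment---you avoid importing the Kaplansky-class machinery---but at the cost of the filtration argument you anticipated; the paper's approach trades that for a black-box citation but exploits definability much more efficiently.
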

\ni
This enables us to consider the $\hcm{R}$-dimension of a module, which is closely related to its Ext-depth. We also turn our attention to inverse limits and see how the inverse limit closure of $\cm{R}$ is related to $\lcm{R}$.
\\
\\
Lastly, we look at the special case when $\t{dim }R=1$, where $\lcm{R}$ and $\hcm{R}$ coincide. In this situation the modules of infinite Ext-depth have a particularly rich structure.
\begin{thm*}[\ref{abelian}]
If $R$ is a one-dimensional Cohen-Macaulay ring, then the class of infinite depth modules is a Grothendieck Abelian category.
\end{thm*}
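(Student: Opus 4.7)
My strategy is to identify the class $\mathcal{I}$ of infinite Ext-depth modules with the category of modules over the total ring of quotients $Q=Q(R)$; since $Q\t{-Mod}$ is the module category over a ring, it is automatically Grothendieck abelian, and this identification yields the theorem.

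Fix a non-zerodivisor $x\in\mf{m}$, which exists because $R$ is a one-dimensional Cohen-Macaulay ring. As $R$ is CM, the associated primes coincide with the minimal primes, so $x$ avoids every minimal prime; together with $\t{dim }R=1$ this makes $R[x^{-1}]$ a zero-dimensional Noetherian ring with the same spectrum as $Q$, and one easily checks $R[x^{-1}]=Q$. The crux is the equivalence: an $R$-module $M$ satisfies $\t{Ext}^{i}_R(k,M)=0$ for all $i\geq 0$ if and only if its $R$-action extends (necessarily uniquely) to a $Q$-action. For the forward direction, $R/x$ has finite length and thus admits a finite filtration with quotients isomorphic to $k$, so the long exact Ext sequences promote vanishing against $k$ to vanishing against $R/x$; applying $\Hom_R(-,M)$ to $0\to R\xrightarrow{x} R\to R/x\to 0$ then shows multiplication by $x$ is bijective on $M$, so $M$ is a module over $R[x^{-1}]=Q$. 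For the converse, flatness of $Q$ over $R$ lets a projective resolution $P_\bullet\to k$ tensor up to an exact complex of free $Q$-modules (exact because $k\otimes_R Q=0$, since $x$ annihilates $k$ yet is a unit in $Q$); such a complex is contractible, and by the adjunction $\Hom_R(-,M)=\Hom_Q(-\otimes_R Q,M)$ one concludes $\t{Ext}^{i}_R(k,M)=0$.

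Once this identification is established, the rest is essentially formal. The inclusion $Q\t{-Mod}\hookrightarrow R\t{-Mod}$ is fully faithful and exact, so kernels, cokernels, direct sums and filtered colimits of diagrams in $\mathcal{I}$ agree whether computed in $R\t{-Mod}$ or $Q\t{-Mod}$, and in particular remain in $\mathcal{I}$. Thus $\mathcal{I}$ inherits the Grothendieck abelian structure of $Q\t{-Mod}$, with $Q$ itself as a generator.

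I expect the main obstacle to be the converse direction of the identification: one must carefully verify $k\otimes_R Q=0$ and that the resulting change-of-rings adjunction really computes Ext on the nose. Once this is in hand, the Grothendieck property follows automatically from the general fact that modules over any ring form a Grothendieck abelian category.
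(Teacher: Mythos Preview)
Your proof is correct and takes a genuinely different route from the paper's. The paper verifies the Grothendieck axioms directly: it uses the depth lemma and the Serre-type property of $\lcm{R}_\infty$ inside $\lcm{R}$ (Lemma~\ref{serre}) to show closure under kernels and cokernels, gets (co)completeness and exactness of filtered colimits from definability, and produces a generator abstractly via the Downward L\"owenheim--Skolem theorem. Your argument instead identifies $\lcm{R}_\infty$ with $Q\t{-Mod}$ for $Q=R[x^{-1}]$ the total ring of fractions, after which the Grothendieck property is automatic. This buys you more than the paper's statement: you obtain an \emph{explicit} generator (namely $Q$), and indeed an equivalence with a module category, which clarifies the example $R=k[[x,y]]/(x^{2})$ where the infinite-depth pure-injectives $Q$ and $k((y))$ are visibly the indecomposables over $Q\simeq k((y))[x]/(x^{2})$. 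The paper's approach, by contrast, would adapt more readily to settings where no such ring-theoretic identification is available. Two small points worth making explicit in your write-up: the equality $R[x^{-1}]=Q$ uses that $R[x^{-1}]$ is Artinian so every non-zerodivisor of $R$ maps to a unit, and the contractibility of $P_\bullet\otimes_R Q$ uses that an acyclic bounded-below complex of projectives is split exact. Both are standard, but stating them sharpens the argument.
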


\section*{Acknowledgements}
\ni
I am grateful to Mike Prest for his comments, feedback and suggestions in relation to this work, as well as the referee for their constructive comments and the alternative proof of \ref{hom2}. This research was funded by the University of Manchester.

\section{Background: depth and duality over local rings}
\ni
For this section, $(R,\mf{m},k)$ is a commutative Noetherian local ring of Krull dimension $d$. For an $R$-module $M$, an \ti{$M$-regular sequence}, or simply $M$-\ti{sequence}, is a sequence of elements $\bm{x}=x_{1},\cdots, x_{n}\in \mf{m}$ such that the following conditions hold:
\begin{enumerate}
\item $x_{i}$ is a non-zerodivisor on $M/(x_{1},\cdots,x_{i-1})$ for all $i=1,\cdots, n$;
\item $M/\bm{x}M\neq 0$.
\end{enumerate}
\ni
If only the first condition holds, we say that $\bm{x}$ is a \ti{weak $M$-sequence}. Nakayama's lemma shows that whenever $M$ is a non-zero finitely generated $R$-module all weak $M$-sequences are automatically $M$-sequences. This is not the case for arbitrary $R$-modules. We say that a regular sequence is \ti{maximal} if it cannot be extended. 
\\
\\
When $M\in\mod{R}$, the category of finitely generated $R$-modules, a theorem due to Rees shows that all maximal $M$-sequences in $\mf{m}$ are of the same length.

\begin{thm}{\cite[Theorem 1.2.5]{bh}}
Let $(R,\mf{m},k)$ be a Noetherian local ring and $M$ a finitely generated $R$-module. Then all maximal $M$-sequences in $\mf{m}$ have the same length $n$, given by
$$n = \t{inf}\{i:\t{Ext}_{R}^{i}(k,M)\neq 0\}.$$
\end{thm}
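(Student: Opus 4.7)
The plan is to prove both statements together — existence of a common length and the formula for it — by induction on $n:=\t{inf}\{i:\t{Ext}_{R}^{i}(k,M)\neq 0\}$. The inductive step will reduce a maximal $M$-sequence in $\mf{m}$ starting with a non-zerodivisor $x$ to a maximal $M/xM$-sequence shorter by one, provided I can simultaneously verify that passing from $M$ to $M/xM$ shifts this Ext-vanishing invariant down by exactly $1$.

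For the base case $n=0$, $\Hom_{R}(k,M)\neq 0$ produces a nonzero element of $M$ annihilated by $\mf{m}$, so every element of $\mf{m}$ is a zerodivisor on $M$ and the only maximal $M$-sequence in $\mf{m}$ is the empty one. When $n\geq 1$ I need to supply the first element of the sequence: the vanishing $\Hom_{R}(k,M)=0$ translates to $\mf{m}$ not being contained in any associated prime of $M$, and since $M$ is finitely generated $\t{Ass}(M)$ is finite, so prime avoidance yields some non-zerodivisor $x\in\mf{m}$. Moreover, any maximal $M$-sequence has length $\geq 1$, and its first element can play the role of $x$.

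The key computation is then to apply $\Hom_{R}(k,-)$ to $0\to M\xra{x} M\to M/xM\to 0$, and observe that because $x\in\mf{m}$ annihilates $k$, it acts as zero on each $\t{Ext}_{R}^{i}(k,M)$. The long exact sequence therefore collapses into short exact sequences
$$0\ra \t{Ext}_{R}^{i}(k,M)\ra \t{Ext}_{R}^{i}(k,M/xM)\ra \t{Ext}_{R}^{i+1}(k,M)\ra 0,$$
from which one reads off $\t{inf}\{i:\t{Ext}_{R}^{i}(k,M/xM)\neq 0\}=n-1$. A maximal $M$-sequence $x,x_{2},\cdots,x_{r}$ in $\mf{m}$ corresponds bijectively to a maximal $M/xM$-sequence $x_{2},\cdots,x_{r}$ in $\mf{m}$, and the inductive hypothesis applied to the finitely generated module $M/xM$ forces $r-1=n-1$. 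Since this argument applies to \textit{every} maximal $M$-sequence (each starts with some non-zerodivisor which can play the role of $x$), all maximal $M$-sequences in $\mf{m}$ share the common length $n$.

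The main obstacle is the inductive bookkeeping around the availability of $x$: finite generation enters precisely at the prime avoidance step to keep $\t{Ass}(M)$ finite, and one has to verify that every maximal sequence — not merely the one obtained by choosing $x$ ourselves — is captured by the reduction. Once prime avoidance and the collapsed long exact sequence are in hand, the induction is essentially automatic.
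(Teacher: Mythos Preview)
The paper does not supply its own proof of this statement; it is quoted verbatim as \cite[Theorem 1.2.5]{bh} and used as background. Your argument is correct and is essentially the standard proof found in the cited reference: induct on the Ext-vanishing index, use $\Hom_{R}(k,M)=0\iff \mf{m}\notin\t{Ass}(M)$ together with prime avoidance to produce (or accept) a non-zerodivisor $x\in\mf{m}$, and exploit that multiplication by $x$ is zero on $\t{Ext}_{R}^{i}(k,M)$ to split the long exact sequence and drop the invariant by one.
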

\begin{defn}
With the notation of the above theorem, we call the common length of all maximal $M$-sequences to be the \ti{depth} of $M$, and we denote it $\t{dp }M$.
\end{defn}
\ni
\ni
We state the following for clarity: if $M$ is a finitely generated $R$-module, then 
\begin{equation*}
\t{dp }M = \t{inf}\{i:\t{Ext}_{R}^{i}(k,M)\neq 0 \}.
\end{equation*}
\begin{defn}
Let $R$ be as above. We say that a non-zero finitely generated $R$-module is \ti{maximal Cohen-Macaulay}, or simply \ti{Cohen-Macaulay}, if the equivalent following conditions hold:
\begin{enumerate}
\item $\t{dp }M=d$;
\item $\t{Ext}_{R}^{i}(k, M)=0$ for all $0\leq i<d$;
\item $H_{\mf{m}}^{i}(M)=0$ for all $i\neq d$.
\end{enumerate}
We say that $R$ is a \ti{Cohen-Macaulay ring} if it is a Cohen-Macaulay module over itself. For convention we assume the zero module is Cohen-Macaulay.
\end{defn}
\ni
Recall that for any $R$-module $N$, $H_{\mf{m}}^{i}(N)$ is the $i$-th \ti{local cohomology} of $N$ (with support in $\mf{m}$), and is given by the formula 
$$H_{\mf{m}}^{i}(N)=\rlim_{t} \t{Ext}_{R}^{i}(R/\mf{m}^{t},N)$$
for every $i\geq 0$. 
\\
\\
If one wishes to extend the definition of Cohen-Macaulay from $\mod{R}$ to $\Mod{R}$, the class of all $R$-modules, some immediate obstructions arise. For instance, if $M$ is any $R$-module, the maximal $M$-sequences in $\mf{m}$ may no longer have the same length. Moreover, the equivalences of the above three conditions fails, as illustrated in the following example, due to Strooker.

\begin{exmp}{\cite[p. 91]{strooker}}
Let $k$ be a field and $R=k[[x,y]]$. Set $M=\bigoplus R/(f)$, where the sum runs over all elements of $\mf{m}$. Then the depth of $M$ is zero, since no element of $\mf{m}$ is regular on $M$. Since $R$ is a domain, the principal ideal $(f)$ is free for every $f\in\mf{m}$, so is Cohen-Macaulay. Applying the functor $\t{Hom}_{R}(k,-)$ to the short exact sequence $0\ra (f)\ra R\ra R/(f)\ra 0$ shows that $\Hom_{R}(k, R/(f))=0$. Consequently $\Hom_{R}(k,M)=0$ and $\inf\{i\geq 0:\t{Ext}_{R}^{i}(k,M)\neq 0\}\neq 0$. In fact, the latter is actually equal to one. 
\end{exmp} 
\ni
In light of this, the notion of depth has been generalised using the invariant given in Rees's theorem.
\begin{defn}{\cite[5.3.6]{strooker}} 
Let $M$ be an arbitrary $R$-module. We define the \ti{Ext-depth} of $M$, denoted $\t{E-dp }M$ as
$$\t{E-dp }M=\inf\{i\geq 0:\t{Ext}_{R}^{i}(k,M)\neq 0\}.$$
If the above integer does not exist, we say that the module has \ti{infinite Ext-depth}.
\end{defn}
\ni
Fortunately, the relationship between local cohomology and the Ext-functors does not restrict to finitely generated modules, so we can also use local cohomology to measure Ext-depth. More precisely, for all $R$-modules $M$, one has
$$\t{E-dp }M=\inf\{i\geq 0: H_{\mf{m}}^{i}(M)\neq 0\}.$$
One can find a proof of this at {\cite[Prop. 5.3.15]{strooker}} or {\cite[Thm. 9.1]{24h}}. In fact, this relationship extends to complexes of modules, as illustrated in \cite{fi}, but this setting will not be used. This will at times have its advantages, due to properties of local cohomology, in particular that $H_{\mf{m}}^{i}(N)=0$ for all $i>\t{dim }R$ and all $R$-modules $N$ (see {\cite[6.1.2 Grothendieck's Vanishing Theorem]{lc}}). Consequently, if a module has finite Ext-depth, it is at most the Krull dimension of $R$. Much more information about local cohomology can be found in \cite{lc} and \cite{24h}.
\\
\\
One can relate Ext-depth and depth for arbitrary $R$-modules. Indeed, for any $R$-module $M$, there is an inequality $\t{dp }M\leq \t{E-dp }M$. Moreover, if $\t{E-dp}(M)$ is finite, it is equal to $\t{dp }M$ if and only if there is an $M$-sequence $\bm{x}=x_{1},\cdots, x_{s} \in \mf{m}$ and a non-zero element $z\in M/\bm{x}M$ such that $\mf{m}z\neq 0$. Proofs of these claims can be found at \cite[5.3.7, 5.3.8]{strooker}.
\\
\\
Returning to the finitely generated case, in the situation where $R$ is a Cohen-Macaulay ring we let $\cm{R}$ denote the full subcategory of $\mod{R}$ consisting of the Cohen-Macaulay modules. This category has been extensively studied and is well understood, as can be seen in the texts \cite{yosh} and \cite{lw}. 
\\
\\
A class of Cohen-Macaulay rings that will be of particular interest to us will be those that admit a canonical module.
\begin{defn}
If $(R,\mf{m},k)$ is a Cohen-Macaulay local ring, then a Cohen-Macaulay $R$-module $\Omega$ is said to be a \ti{canonical module} if 
$$
\t{dim}_{k} \ \t{Ext}_{R}^{i}(k,\Omega) = 
\begin{cases}
0 & \mbox{ if }i \neq \t{dim }R, \\
1 & \mbox{ if }i = \t{dim }R.
\end{cases}
$$
\end{defn}
\ni
It is known that, if it exists, the canonical module is unique up to isomorphism, see \cite[Theorem 3.3.4]{bh}. Necessary and sufficient conditions for the existence of the canonical module can be found at {\cite[Thm 3.3.6]{bh}}; in particular, any complete local ring admits a canonical module. A notable subclass of Cohen-Macaulay rings admitting a canonical module are Gorenstein rings, for which we recall the definition.

\begin{defn}
A Cohen-Macaulay ring $R$ is \ti{Gorenstein} if it has finite injective dimension over itself.
\end{defn}
\ni
There is alternative property that completely determines Gorenstein rings, that will be of some use.
 
\begin{prop}{\cite[3.3.7]{bh}}
The following are equivalent for a Cohen-Macaulay ring $R$.
\begin{enumerate}
\item $R$ is Gorenstein.
\item $\Omega$ exists and is isomorphic to $R$.
\end{enumerate}
\end{prop}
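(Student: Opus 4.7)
The plan is to establish both implications by relating the injective dimension of $R$ (as a module over itself) to the vanishing of $\t{Ext}_R^i(k,R)$. Throughout I take for granted the standard fact that for a finitely generated module $M$ over a Noetherian local ring, $M$ has finite injective dimension if and only if $\sup\{i : \t{Ext}_R^i(k,M) \neq 0\}$ is finite, in which case the two quantities coincide (a form of Bass's theorem). The direction $(2)\Rightarrow(1)$ is then immediate: if $\Omega$ exists and $\Omega \cong R$, the defining vanishing condition on $\Omega$ forces $\t{Ext}_R^i(k,R) = 0$ for all $i \neq d$, so $R$ has finite injective dimension (equal to $d$) and is by definition Gorenstein.

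For $(1)\Rightarrow(2)$, I would verify that $R$ itself satisfies the definition of a canonical module. The Cohen-Macaulay hypothesis gives $\t{Ext}_R^i(k,R) = 0$ for $i < d$, and combining finite injective dimension with the identity $\t{injdim}\,R = \t{dp}\,R = d$ yields $\t{Ext}_R^i(k,R) = 0$ for $i > d$. What remains is the nontrivial statement $\t{Ext}_R^d(k,R) \cong k$, which I would approach by induction on $d$. The base case $d = 0$ uses the classical fact that a zero-dimensional Gorenstein ring is local Artinian with one-dimensional socle, so that $\Hom_R(k,R) \cong k$. For the inductive step, choose a non-zerodivisor $x \in \mf{m}$; then $R/(x)$ is again Cohen-Macaulay local of dimension $d-1$ and inherits finite injective dimension, hence is Gorenstein, and the induction hypothesis yields $\t{Ext}_{R/(x)}^{d-1}(k,R/(x)) \cong k$. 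Applying $\Hom_R(k,-)$ to the short exact sequence $0 \to R \xrightarrow{x} R \to R/(x) \to 0$, and using that $x$ acts as zero on every Ext group whose first argument is $k$, produces an isomorphism $\t{Ext}_R^d(k,R) \cong \t{Ext}_R^{d-1}(k,R/(x))$; combining this with the change-of-rings identification $\t{Ext}_R^{d-1}(k,R/(x)) \cong \t{Ext}_{R/(x)}^{d-1}(k,R/(x))$ completes the induction.

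The main obstacle is the bookkeeping in the inductive step: both the descent of the Gorenstein property modulo a non-zerodivisor and the change-of-rings isomorphism for Ext need to be justified carefully, as they rest on manipulations of long exact sequences and the fact that $k$ is naturally an $R/(x)$-module with trivial $x$-action. Once these classical lemmas are in hand, the induction runs cleanly and yields the one-dimensional Ext group needed to identify $R$ with $\Omega$.
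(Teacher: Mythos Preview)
The paper does not prove this proposition; it is simply quoted from \cite[3.3.7]{bh} as background. Your outline is essentially the standard argument found there, and the overall strategy is correct.

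One point deserves tightening. In the inductive step you split the desired isomorphism into two pieces: first $\t{Ext}_R^{d}(k,R)\cong \t{Ext}_R^{d-1}(k,R/(x))$ from the long exact sequence, and then a ``change-of-rings identification'' $\t{Ext}_R^{d-1}(k,R/(x))\cong \t{Ext}_{R/(x)}^{d-1}(k,R/(x))$. The second isomorphism is not a general fact: for two $R/(x)$-modules the Ext groups over $R$ and over $R/(x)$ typically differ, and in low dimensions one can check that the analogous identification fails in other degrees. What actually holds is Rees's lemma: if $x$ is $R$-regular and annihilates $N$, then $\t{Ext}_R^{i+1}(N,R)\cong \t{Ext}_{R/(x)}^{i}(N,R/(x))$ for all $i\geq 0$. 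This gives $\t{Ext}_R^{d}(k,R)\cong \t{Ext}_{R/(x)}^{d-1}(k,R/(x))$ in a single step and bypasses the questionable intermediate isomorphism. With that substitution your induction goes through cleanly.
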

\ni
Over a Cohen-Macaulay ring admitting a canonical module $\Omega$, the functor $(-)^{*}:=\Hom_{R}(-,\Omega)$ plays a special role in understanding the category $\cm{R}$, in particular its Auslander-Reiten theory. This is illustrated in great detail in the monograph \cite{yosh}. While we will not need this much detail, there are a few properties of the functor that we will use. 

\begin{prop}{\cite[3.3.10]{bh}}\label{ld}
Let $(R,\mf{m},k)$ be a Cohen-Macaulay ring admitting a canonical module $\Omega$ and let $M\in\cm{R}$. Then
\begin{enumerate}
\item $M^{*}=\Hom_{R}(M,\Omega)$ is a Cohen-Macaulay module,
\item $\t{Ext}_{R}^{i}(M,\Omega)=0$ for all $i>0$,
\item The natural map $M\ra M^{**}$ is an isomorphism.
\end{enumerate}
In particular, $\Omega$ is an injective cogenerator in $\cm{R}$.
\end{prop}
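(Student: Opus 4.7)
The plan is to proceed by induction on $d=\t{dim }R$. In the base case $d=0$, the ring $R$ is Artinian and the defining vanishing conditions force $\Omega$ to be the injective envelope $E(k)$ of the residue field, so that $(-)^{*}$ is Matlis duality. Every finitely generated module is then MCM, and the three conclusions follow from the classical properties of Matlis duality: its exactness gives (2), its preservation of finite generation gives (1), and reflexivity of finitely generated modules gives (3).

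For the inductive step $d>0$, use prime avoidance to pick $x\in\mf{m}$ that is simultaneously a non-zerodivisor on $R$, $M$ and $\Omega$, and set $\ol{R}:=R/xR$, $\ol{M}:=M/xM$, $\ol{\Omega}:=\Omega/x\Omega$. Then $\ol{R}$ is Cohen-Macaulay of dimension $d-1$, $\ol{M}\in\cm{\ol{R}}$, and the change of rings isomorphism $\t{Ext}_{R}^{i+1}(k,\Omega)\cong\t{Ext}_{\ol{R}}^{i}(k,\ol{\Omega})$ certifies that $\ol{\Omega}$ is a canonical module for $\ol{R}$. The same formula produces $\t{Ext}_{R}^{i+1}(\ol{M},\Omega)\cong\t{Ext}_{\ol{R}}^{i}(\ol{M},\ol{\Omega})$. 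Applying $\Hom_{R}(-,\Omega)$ to $0\ra M\xra{x}M\ra\ol{M}\ra 0$, the inductive hypothesis (2) for $\ol{M}$ combined with Nakayama's lemma forces $\t{Ext}_{R}^{i}(M,\Omega)=0$ for all $i\geq 1$, proving (2); the long exact sequence then collapses to $0\ra M^{*}\xra{x}M^{*}\ra\ol{M}^{*}\ra 0$, so $x$ is $M^{*}$-regular with quotient $\ol{M}^{*}\in\cm{\ol{R}}$, which proves (1). Repeating the dualisation on this new sequence, now using (2) for $M^{*}$, yields a parallel short exact sequence $0\ra M^{**}\xra{x}M^{**}\ra\ol{M}^{**}\ra 0$; the biduality maps assemble the two sequences into a commutative diagram, and the snake lemma together with the inductive isomorphism $\ol{M}\cong\ol{M}^{**}$ reduces $\ker(M\ra M^{**})$ and $\t{coker}(M\ra M^{**})$ to finitely generated modules on which $x$ acts surjectively, whence Nakayama yields (3).

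For the concluding statement, injectivity of $\Omega$ in $\cm{R}$ is precisely (2), while to see cogeneration one takes a surjection $R^{n}\tra M^{*}$ supplied by (1), applies the left exact functor $\Hom_{R}(-,\Omega)$, and invokes (3) to extract the embedding $M\cong M^{**}\hra\Omega^{n}$. The main obstacle is the careful ordering of the three conclusions in the inductive step: (2) must be established before (1), and (1) must then feed back into a second application of (2) in order to prove (3); apart from this bookkeeping, the entire argument reduces to the change of rings isomorphism, long exact sequences, and Nakayama's lemma.
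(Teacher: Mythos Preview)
The paper does not supply its own proof of this proposition: it is quoted verbatim from \cite[3.3.10]{bh} and used as background. Your argument is correct and is, in outline and in detail, the standard proof given in that reference---induction on $d$ via reduction modulo a common non-zerodivisor, Rees's shift isomorphism $\t{Ext}_{R}^{i+1}(\ol{M},\Omega)\cong\t{Ext}_{\ol{R}}^{i}(\ol{M},\ol{\Omega})$, Nakayama to kill the higher Ext groups, and the snake lemma for biduality. One small point worth making explicit in the write-up: the right-hand square of your commutative diagram relies on identifying $M^{*}/xM^{*}$ with $\Hom_{\ol{R}}(\ol{M},\ol{\Omega})$ compatibly with the biduality maps, which is routine but not entirely automatic from naturality alone.
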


\ni
Over a complete local ring, one can relate the canonical module with local cohomology using the following theorem.

\begin{thm}[Grothendieck local duality]{\cite[(proof of) 3.5.8]{bh}}\label{gd}
Let $(R,\mf{m},k)$ be a complete Cohen-Macaulay ring of Krull dimension $d$. Then for all $R$-modules $M$ and integers $i$ there is a natural isomorphism
$$\t{Ext}_{R}^{i}(M,\Omega)\simeq \Hom_{R}(H_{\mf{m}}^{d-i}(M),E(k)),$$
where $E(k)$ denotes the injective envelope of the residue field $k$.
When $M$ is finitely generated, there is a further isomorphism
$$H_{\mf{m}}^{d-i}(M)\simeq \Hom_{R}(\t{Ext}_{R}^{i}(M,\Omega),E(k)).$$
\end{thm}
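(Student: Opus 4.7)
The plan is to exploit Matlis duality in the complete local setting together with the \v{C}ech-complex description of local cohomology. Write $D(-) := \Hom_R(-, E(k))$; completeness of $R$ makes $D$ exact and restricts it to mutually inverse dualities between Artinian and Noetherian $R$-modules, satisfying $DD \simeq \t{id}$ on each. The preliminary fact I would establish is the identification $\Omega \simeq D(H^d_{\mf{m}}(R))$, deduced from the defining Ext-vanishing of the canonical module combined with Matlis reflexivity of finitely generated modules over a complete local ring.

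Next, pick a system of parameters $\bm{x} = x_1, \ldots, x_d$ and form the associated \v{C}ech complex $\check{C}^\bullet := \check{C}^\bullet(\bm{x})$, a bounded complex of flat $R$-modules in cohomological degrees $0, \ldots, d$ with $H^i(\check{C}^\bullet \otimes_R M) = H^i_{\mf{m}}(M)$ for every $R$-module $M$. The Cohen-Macaulay hypothesis forces $H^i(\check{C}^\bullet) = 0$ for $i < d$, so $\check{C}^\bullet$ is, up to the shift $[d]$, a flat resolution of $H^d_{\mf{m}}(R)$. Since $D$ sends flat modules to injective ones, $D(\check{C}^\bullet)$ is a bounded complex of injectives; combined with $\Omega \simeq D(H^d_{\mf{m}}(R))$, it follows that $D(\check{C}^\bullet)[-d]$ is an injective resolution of $\Omega$.

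The first isomorphism now follows from tensor-hom adjunction, which provides an isomorphism of complexes
$$D(\check{C}^\bullet \otimes_R M) \simeq \Hom_R(M, D(\check{C}^\bullet));$$
shifting by $[-d]$ and taking $i$-th cohomology, the right-hand side computes $\t{Ext}^i_R(M,\Omega)$ because $D(\check{C}^\bullet)[-d]$ resolves $\Omega$ by injectives, while the left-hand side equals $D(H^{d-i}_{\mf{m}}(M))$ by exactness of $D$. Naturality in $M$ is automatic from the construction. For the second statement, when $M$ is finitely generated each $H^{d-i}_{\mf{m}}(M)$ is Artinian (a standard consequence of the \v{C}ech description together with finiteness of the socle $\Hom_R(k, H^{d-i}_{\mf{m}}(M))$); applying $D$ to the first isomorphism and invoking $DD \simeq \t{id}$ on Artinians then yields the second.

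The main obstacle is the identification of $D(\check{C}^\bullet)[-d]$ with an injective resolution of $\Omega$; this is where the Cohen-Macaulay hypothesis, completeness of $R$, and the defining properties of $\Omega$ all come together. An equivalent but equally technical approach is to bypass the \v{C}ech complex and work directly with the known structure of the minimal injective resolution of $\Omega$, namely $I^j = \bigoplus_{\t{ht}\, \mf{p} = j} E(R/\mf{p})$, carrying the same information through a different bookkeeping.
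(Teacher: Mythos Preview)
The paper does not supply its own proof of this theorem; it is quoted as background with a citation to \cite[(proof of) 3.5.8]{bh}, so there is nothing in the paper to compare your argument against.

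That said, your sketch is sound and is essentially the standard derivation of local duality in the Cohen--Macaulay case: the \v{C}ech complex on a system of parameters is a flat resolution of $H^{d}_{\mf{m}}(R)$ (because $R$ is Cohen--Macaulay), its Matlis dual is then an injective resolution of $\Omega\simeq D(H^{d}_{\mf{m}}(R))$, and tensor--hom adjunction converts $D(\check{C}^{\bullet}\otimes_{R}M)$ into $\Hom_{R}(M,D(\check{C}^{\bullet}))$, from which the first isomorphism drops out on cohomology. The second isomorphism for finitely generated $M$ via Artinianness of $H^{d-i}_{\mf{m}}(M)$ and Matlis reflexivity is also correct. One small point worth tightening: the claim that $D$ carries flats to injectives deserves a one-line justification (adjunction gives $\Hom_{R}(-,\Hom_{R}(F,E(k)))\simeq\Hom_{R}(-\otimes_{R}F,E(k))$, exact since $F$ is flat and $E(k)$ injective), and the shift/sign bookkeeping between $\check{C}^{\bullet}[d]$ and $D(\check{C}^{\bullet})[-d]$ should be made explicit to avoid off-by-one errors. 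Otherwise the argument is complete.
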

\ni
If $M$ is any $R$-module, we will call the module $M^{\vee}:=\Hom_{R}(M,E(k))$ the \ti{Matlis dual} of $M$. We note that $E(k)$ is an injective cogenerator in $\Mod{R}$. 
\\
\\
The following result gives a few useful properties of the Matlis dual.

\begin{prop}{\cite[3.4.1, 3.4.5-7]{rha}}
Let $(R,\mf{m},k)$ be a complete Noetherian local ring and $E(k)$ as above.
\begin{enumerate}
\item For every $R$ module the canonical map $M\ra M^{\vee\vee}$ is injective.
\item If $M$ is either a finitely generated or artinian, then $M$ is reflexive, that is the canonical map $M\ra M^{\vee\vee}$ is an isomorphism;
\item $(-)^{\vee}$ gives a duality between finitely generated $R$-modules and artinian $R$-modules.
\end{enumerate}
\end{prop}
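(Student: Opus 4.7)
The plan is to reduce every assertion to the fundamental Matlis isomorphism $\Hom_{R}(E(k),E(k))\simeq R$, which holds precisely because $R$ is complete and Noetherian local. This tells us that the canonical evaluation map $R\ra R^{\vee\vee}$ is an isomorphism, and by passing to finite direct sums, the same holds for $R^{n}$ for every $n\geq 0$. Because $E(k)$ is injective, the contravariant functor $(-)^{\vee}$ is exact, and consequently $(-)^{\vee\vee}$ is a covariant exact endofunctor of $\Mod{R}$. I will treat this Matlis isomorphism as a black box; everything else will follow by diagram chases and finiteness arguments built on top of it.

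For (1), I would just invoke the fact that $E(k)$ is an injective cogenerator of $\Mod{R}$: given any non-zero $m\in M$, there is a homomorphism $f:M\ra E(k)$ with $f(m)\neq 0$, so the image of $m$ in $M^{\vee\vee}$ is non-zero. For the finitely generated half of (2), choose a finite presentation $R^{m}\ra R^{n}\ra M\ra 0$, apply $(-)^{\vee\vee}$ to produce an exact sequence $R^{m}\ra R^{n}\ra M^{\vee\vee}\ra 0$, and compare with the original via the natural transformation $\eta:\t{id}\Rightarrow(-)^{\vee\vee}$; since $\eta_{R^{k}}$ is an isomorphism for finite $k$, the five lemma forces $\eta_{M}$ to be an isomorphism. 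For the artinian half, I would first check that any artinian $R$-module over a Noetherian local ring is $\mf{m}$-torsion (the descending chain $\mf{m}^{n}Rm$ stabilises, and Nakayama then forces it to be eventually zero), so $M$ is an essential extension of its finite-dimensional socle and hence embeds in $E(k)^{s}$ for some $s$. Since $E(k)\simeq R^{\vee}$ is artinian, $E(k)^{s}/M$ is again artinian and embeds in $E(k)^{t}$, producing a finite injective copresentation $0\ra M\ra E(k)^{s}\ra E(k)^{t}$. Applying $(-)^{\vee\vee}$ and invoking $E(k)^{\vee\vee}\simeq E(k)$ (read off from the Matlis isomorphism applied to $R$), a second five-lemma argument yields the reflexivity.

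For (3), the same copresentations give both halves of the duality for free. A surjection $R^{n}\tra M$ dualises to an embedding $0\ra M^{\vee}\hra E(k)^{n}$, exhibiting $M^{\vee}$ as a submodule of an artinian module and hence artinian. Conversely, the copresentation $0\ra A\ra E(k)^{s}\ra E(k)^{t}$ from part (2) dualises to a finite presentation $R^{t}\ra R^{s}\ra A^{\vee}\ra 0$, showing $A^{\vee}$ is finitely generated. Combined with (2), the functors $(-)^{\vee}$ restrict to mutually inverse contravariant equivalences between finitely generated and artinian $R$-modules.

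The only real obstacle is the input $\Hom_{R}(E(k),E(k))\simeq R$, which is where the completeness hypothesis is consumed; once it is accepted, the rest of the proof is a standard alternation between presentations, copresentations and the five lemma. Because the proof is quite mechanical from this point, I expect the main intellectual content to be solely in verifying that Matlis isomorphism itself, and the fact that artinian modules over $(R,\mf{m})$ admit finite injective copresentations in copies of $E(k)$.
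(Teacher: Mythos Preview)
The paper does not supply a proof of this proposition; it is quoted as background with a citation to \cite[3.4.1, 3.4.5--7]{rha}. Your outline is the standard Matlis duality argument and is correct: once one grants the endomorphism computation $\Hom_{R}(E(k),E(k))\simeq R$ for complete $R$, the reflexivity of finitely generated modules follows from a finite presentation and the five lemma, the reflexivity of artinian modules from a finite $E(k)$-copresentation and the five lemma, and the duality in (3) from dualising those same (co)presentations. One small remark: your appeal to ``$E(k)\simeq R^{\vee}$ is artinian'' is not quite self-contained, since the isomorphism $R^{\vee}\simeq E(k)$ is tautological and does not by itself explain the artinianness; you should either invoke the standard fact that $E(k)$ is artinian over any Noetherian local ring, or derive it from the order-reversing bijection between submodules of $E(k)$ and ideals of $\hat{R}=R$ that the Matlis isomorphism provides.
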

\ni
The Matlis dual also gives us the following useful relations between Ext and Tor modules.

\begin{lem}{\cite[1.2.11]{trlifaj}}\label{relations}
Let $(R,\mf{m},k)$ be a commutative Noetherian local ring and $E(k)$ as above.
\begin{enumerate}
\item Let $M$ and $N$ be arbitrary $R$-modules, then for any $i\geq 0$ there is an isomorphism
$$\t{Ext}_{R}^{i}(M,N^{\vee})\simeq (\t{Tor}_{i}^{R}(M,N))^{\vee}.$$

\item Let $M$ be a finitely generated module and $N$ an arbitrary module. Then for any $i\geq 0$ there is an isomorphism
$$\t{Tor}_{i}^{R}(M, N^{\vee})\simeq (\t{Ext}_{R}^{i}(M,N))^{\vee}.$$
\end{enumerate}
\end{lem}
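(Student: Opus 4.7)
The plan is to reduce both statements to the tensor-hom adjunction
$$\Hom_{R}(A \otimes_{R} B, E(k)) \simeq \Hom_{R}(A, \Hom_{R}(B, E(k)))$$
combined with the fact that $E(k)$ is injective, so the Matlis dual $(-)^{\vee}$ is an exact functor and in particular commutes with taking cohomology of any complex.

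For part (1), I would take a projective resolution $P_{\bullet}\ra M$. Applying the adjunction above with $A=P_{i}$ and $B=N$, naturally in $i$, yields an isomorphism of cochain complexes
$$\Hom_{R}(P_{\bullet}, N^{\vee})\simeq (P_{\bullet}\otimes_{R} N)^{\vee}.$$
The cohomology of the left-hand complex is $\t{Ext}_{R}^{i}(M,N^{\vee})$ by definition, while on the right, exactness of $(-)^{\vee}$ allows the cohomology functor to be pulled inside the dual, producing $(\t{Tor}_{i}^{R}(M,N))^{\vee}$.

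For part (2), I would use the finite generation of $M$ over the Noetherian ring $R$ to choose a resolution $P_{\bullet}\ra M$ by \emph{finitely generated} free modules. For each $P=R^{n}$, finite direct sums and finite direct products coincide, so there is a natural isomorphism
$$P\otimes_{R} N^{\vee}\simeq (N^{\vee})^{n}\simeq (N^{n})^{\vee}\simeq \Hom_{R}(P,N)^{\vee}.$$
Assembled levelwise, this gives an isomorphism of complexes $P_{\bullet}\otimes_{R} N^{\vee}\simeq \Hom_{R}(P_{\bullet}, N)^{\vee}$, and once again the injectivity of $E(k)$ lets cohomology commute with $(-)^{\vee}$, yielding the claimed isomorphism.

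The main obstacle is really just pinning down why part (2) genuinely requires $M\in\mod{R}$: the chain of isomorphisms above depends on passing from a finite direct sum on the tensor side to a finite direct product on the dual side, and tensor does not in general commute with infinite products, so the argument breaks down for infinitely generated free modules. Beyond this observation, the only remaining routine task is to check that the pointwise isomorphisms constructed above are natural in $P$, so that they assemble into genuine morphisms of complexes compatible with the differentials of $P_{\bullet}$.
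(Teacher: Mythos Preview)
Your argument is correct and is exactly the standard proof of these Ext--Tor duality isomorphisms. Note, however, that the paper does not actually prove this lemma: it is stated with a citation to \cite[1.2.11]{trlifaj} and followed by the remark that it is a specialisation of a more general result found there, so there is no in-paper proof to compare against.
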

\ni
The above result is a specialisation of a much more general result, which is stated in its completeness in the given reference. Using the above dualities, we consider a dual notion to $\t{E-dp}$, again following the terminology of Strooker.

\begin{defn}{\cite[p. 102]{strooker}}
Let $M$ be an $R$-module. Define the \ti{Tor-codepth} of $M$, denoted by $\t{T-codp }M$ as
$$\t{T-codp }M=\inf\{i\geq 0:\t{Tor}_{i}^{R}(k,M)\neq 0\}.$$
If no such integer exists, we say the module has \ti{infinte Tor-codepth}.
\end{defn}
\begin{rmk}
Tor-codepth is also known as \ti{width}, see \cite[1.9]{duality} and \cite{yass}.
\end{rmk}
\ni
From the above lemma, it is clear that if $\t{E-dp }M=t$, then $\t{T-codp }M^{\vee}=t$, and vice-versa, where $t$ can be either finite or infinite. One can generalise the notions of Ext-depth and Tor-codepth as follows: if $\mf{a}\subset R$ is an ideal, define
$$\t{E-dp}(\mf{a},M)=\inf\{n\geq :\t{Ext}_{R}^{n}(R/\mf{a},M)\neq 0\}$$
and the dual notion for $\t{T-codp}(\mf{a},M)$. We can relate Ext-depth and Tor-depth using the following useful result.

\begin{prop}{\cite[Cor. 6.1.8]{strooker}}\label{strook}
Let $R$ be a complete Noetherian local ring, $\mf{a}$ an ideal and $M$ an $R$-module. Then $\t{E-dp}(\mf{a},M)<\infty$ if and only if $\t{T-codp}(\mf{a},M)<\infty$, and if this is the case then
\begin{equation*}
\t{E-dp}(\mf{a},M)+\t{T-codp}(\mf{a},M)\leq \t{dim }R.
\end{equation*}
\end{prop}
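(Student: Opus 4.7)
The plan is to prove both claims uniformly via Koszul (co)homology and its self-duality, after reducing to an ideal generated by at most $\dim R$ elements. First I would choose a sequence $\bm{x}=x_1,\dots,x_n\in\mf{a}$ with $n\leq d:=\dim R$ such that $\sqrt{(\bm{x})}=\sqrt{\mf{a}}$; such a sequence exists in a Noetherian local ring of dimension $d$ by a standard prime-avoidance construction bounding the arithmetic rank of any ideal by the dimension. Then I would observe that both $\t{E-dp}(\mf{a},M)$ and $\t{T-codp}(\mf{a},M)$ depend only on $\sqrt{\mf{a}}$: if $I=\sqrt{\mf{a}}$ and $I^t\subseteq\mf{a}$, the finite filtration of $R/\mf{a}$ by subquotients killed by $I$ allows one to transfer vanishing in $\t{Ext}^i(R/I,-)$ to $\t{Ext}^i(R/\mf{a},-)$ (and similarly for $\t{Tor}$). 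Without loss of generality I may therefore replace $\mf{a}$ by $\mf{b}:=(\bm{x})$.

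Next I would establish the Koszul characterisations
\[
\t{E-dp}(\mf{b},M)=\inf\{i:H^i(\bm{x};M)\neq 0\}, \qquad \t{T-codp}(\mf{b},M)=\inf\{i:H_i(\bm{x};M)\neq 0\},
\]
valid for arbitrary $M$. These proceed by induction on $n$: the base case $n=1$ follows directly from the defining short exact sequence $0\to R\xra{x_1}R\to R/(x_1)\to 0$, while the inductive step uses the tensor-product decomposition $K^\bullet(\bm{x})\simeq K^\bullet(x_1,\dots,x_{n-1})\otimes K^\bullet(x_n)$, together with the short exact sequence $0\to R/(x_1,\dots,x_{n-1})\xra{x_n}R/(x_1,\dots,x_{n-1})\to R/\mf{b}\to 0$, which produce matching long exact sequences on both the Koszul and the $\t{Ext}/\t{Tor}$ sides.

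Finally, I would apply Koszul self-duality $H^i(\bm{x};M)\simeq H_{n-i}(\bm{x};M)$ to conclude both claims. The finiteness equivalence is immediate: all $H^i(\bm{x};M)$ vanish iff all $H_i(\bm{x};M)$ do, so $\t{E-dp}(\mf{a},M)<\infty$ iff $\t{T-codp}(\mf{a},M)<\infty$. For the numerical bound, if $s=\t{E-dp}(\mf{b},M)$ is finite then $H^s(\bm{x};M)\neq 0$ forces $H_{n-s}(\bm{x};M)\neq 0$, so $\t{T-codp}(\mf{b},M)\leq n-s$, and hence $\t{E-dp}(\mf{a},M)+\t{T-codp}(\mf{a},M)\leq n\leq d$. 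The main obstacle I anticipate is establishing the Koszul characterisations above for arbitrary (non-finitely-generated) modules: while classical in the finite case, the induction must be set up carefully to avoid finite-generation hypotheses, and the reduction to the radical of the ideal likewise requires care in handling the infinite direct sums/products that appear when lifting vanishing from $R/I$ to arbitrary $R/I$-modules in the filtration.
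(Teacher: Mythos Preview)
The paper does not supply its own proof of this proposition; it is quoted from Strooker \cite[Cor.~6.1.8]{strooker} as a known result, so there is no in-paper argument to compare against. Your proposed route---bounding the arithmetic rank of $\mf{a}$ by $\dim R$, identifying $\t{E-dp}(\mf{a},M)$ and $\t{T-codp}(\mf{a},M)$ with the infima of the non-vanishing Koszul cohomology and homology on a sequence $\bm{x}$ with $\sqrt{(\bm{x})}=\sqrt{\mf{a}}$, and then invoking the self-duality $H^{i}(\bm{x};M)\simeq H_{n-i}(\bm{x};M)$---is correct and is essentially the standard argument one finds in Strooker: the Koszul characterisations of Ext-depth and Tor-codepth for arbitrary (not necessarily finitely generated) modules are set up in his Chapter~5, and the inequality is then read off from self-duality in Chapter~6. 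The two technical points you single out (radical-independence of the invariants and validity of the Koszul depth/width formulae beyond the finitely generated case) are genuine but well documented in that reference. Note incidentally that your argument nowhere invokes completeness of $R$; the result indeed holds over any Noetherian local ring, so the hypothesis as recorded in the paper is stronger than necessary.
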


\ni
We can use the left exactness of the Hom functor to see how Ext-depth behaves with respect to short exact sequences.

\begin{lem}[Depth lemma]{\cite[Prop. 9.1.2(e)]{bh}}\label{dl}
Let $\mf{a}$ be an ideal of $R$ and $0\ra L\ra M \ra N\ra 0$ a short exact sequence of $R$-modules. Then
$$\t{E-dp}(\mf{a}, M)\geq \min\{\t{E-dp}(\mf{a},L),\t{E-dp}(\mf{a},N)\}$$
$$\t{E-dp}(\mf{a},L)\geq \min\{\t{E-dp}(\mf{a},M),\t{E-dp}(\mf{a},N)+1\}$$
$$\t{E-dp}(\mf{a},N)\geq \min\{\t{E-dp}(\mf{a},L)-1,\t{E-dp}(\mf{a},M)\}$$
\end{lem}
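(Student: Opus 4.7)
The plan is to apply the left-exact functor $\Hom_{R}(R/\mf{a},-)$ to the given short exact sequence, producing the long exact cohomology sequence
\begin{equation*}
\cdots \ra \t{Ext}_{R}^{i-1}(R/\mf{a},N) \ra \t{Ext}_{R}^{i}(R/\mf{a},L) \ra \t{Ext}_{R}^{i}(R/\mf{a},M) \ra \t{Ext}_{R}^{i}(R/\mf{a},N) \ra \t{Ext}_{R}^{i+1}(R/\mf{a},L) \ra \cdots
\end{equation*}
and then read off each inequality by identifying the smallest $i$ for which some $\t{Ext}^{i}$ can be forced to vanish purely from the vanishing of its neighbours, using the convention that $\t{Ext}^{j}=0$ for $j<0$ and the convention $i+1\leq \infty$ whenever one of the Ext-depths involved is infinite (so that the case of infinite Ext-depth is subsumed).

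For the first inequality, set $t=\min\{\t{E-dp}(\mf{a},L),\t{E-dp}(\mf{a},N)\}$. Then for every $i<t$, both $\t{Ext}_{R}^{i}(R/\mf{a},L)$ and $\t{Ext}_{R}^{i}(R/\mf{a},N)$ vanish, so the exactness of $\t{Ext}_{R}^{i}(R/\mf{a},L)\ra \t{Ext}_{R}^{i}(R/\mf{a},M)\ra \t{Ext}_{R}^{i}(R/\mf{a},N)$ forces the middle term to vanish, giving $\t{E-dp}(\mf{a},M)\geq t$. For the second, set $s=\min\{\t{E-dp}(\mf{a},M),\t{E-dp}(\mf{a},N)+1\}$; then for $i<s$ one has $i<\t{E-dp}(\mf{a},M)$ and $i-1<\t{E-dp}(\mf{a},N)$, so the segment $\t{Ext}_{R}^{i-1}(R/\mf{a},N)\ra \t{Ext}_{R}^{i}(R/\mf{a},L)\ra \t{Ext}_{R}^{i}(R/\mf{a},M)$ squeezes $\t{Ext}_{R}^{i}(R/\mf{a},L)$ between two zero modules. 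The third inequality is entirely analogous, using the segment $\t{Ext}_{R}^{i}(R/\mf{a},M)\ra \t{Ext}_{R}^{i}(R/\mf{a},N)\ra \t{Ext}_{R}^{i+1}(R/\mf{a},L)$ and the constraint $i<\min\{\t{E-dp}(\mf{a},L)-1,\t{E-dp}(\mf{a},M)\}$.

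There is no genuine obstacle here; the only point requiring care is bookkeeping at the boundary, namely verifying that the argument for the second inequality remains valid when $i=0$ (where $\t{Ext}^{-1}$ is vacuously zero) and that the inequalities hold as literal statements in $\mb{Z}\cup\{\infty\}$ when any of the three Ext-depths is infinite, which is immediate since in that case every $\t{Ext}^{i}$ of the corresponding module vanishes and the argument above applies uniformly for all $i$.
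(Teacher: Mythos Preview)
Your argument is correct and is exactly the standard proof: read off the vanishing of the relevant $\t{Ext}^{i}(R/\mf{a},-)$ from the long exact sequence by squeezing each term between two zeros. The paper does not supply its own proof of this lemma; it simply cites \cite[Prop.~9.1.2(e)]{bh}, and what you have written is essentially the argument one finds there.
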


\section{Two definable subcategories of Cohen-Macaulay modules}
\ni
Throughout this section, we will assume that $(R,\mf{m},k)$ is a Cohen-Macaulay ring of Krull dimension $d$. We will not assume the existence of a canonical module. As illustrated in the previous section, there is an ambiguity when extending the definition of a Cohen-Macaulay $R$-module. Since for finitely generated modules depth and Ext-depth coincide, one can consider the subcategory of $R$-modules that satisfy the Ext-depth definition of Cohen-Macaulay. We will define $\hcm{R}$ to be precisely these modules, that is
\begin{align*}\hcm{R}&=\{M\in\Mod{R}:\t{Ext}_{R}^{i}(k,M)=0 \t{ for all }i<d\} \\
& = \{M\in\Mod{R}: H_{\mf{m}}^{i}(M)=0 \t{ for all }i<d\}.
\end{align*}

\begin{lem}
$\hcm{R}$ is a definable subcategory of $\Mod{R}$, that is it is closed under pure submodules, products and direct limits.
\end{lem}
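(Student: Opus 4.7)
The plan is to express each defining condition as a positive-primitive (pp) implication, since a subcategory of $\Mod{R}$ is definable exactly when it is cut out by such implications, and any individual pp-implication is automatically preserved under pure submodules, products, and direct limits.

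To carry this out, since $R$ is Noetherian the residue field $k=R/\mf{m}$ admits a resolution $\cdots\to R^{n_{1}}\to R^{n_{0}}\to k\to 0$ by finitely generated free modules, with differentials given by multiplication by matrices over $R$. Applying $\Hom_{R}(-,M)$ and using $\Hom_{R}(R^{n},M)\cong M^{n}$ gives a cochain complex of abelian groups $M^{n_{0}}\xrightarrow{D_{1}}M^{n_{1}}\xrightarrow{D_{2}}\cdots$ whose $i$-th cohomology is $\t{Ext}_{R}^{i}(k,M)$, with each $D_{j}$ given by a matrix over $R$ acting on tuples. The subgroups $\ker D_{i+1}$ and $\t{im}\,D_{i}$ of $M^{n_{i}}$ are pp-definable: the kernel is the solution set of a finite $R$-linear system, and the image is the solution set of such a system preceded by existential quantifiers. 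Since $\t{im}\,D_{i}\subseteq \ker D_{i+1}$ holds tautologically, the vanishing $\t{Ext}_{R}^{i}(k,M)=0$ is equivalent to the pp-implication $\ker D_{i+1}\subseteq \t{im}\,D_{i}$ holding on $M$.

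Finally, one invokes the standard fact that the solution set $\phi(M)$ of a pp-formula $\phi$ commutes with direct products and with direct limits, and satisfies $\phi(L)=L^{n}\cap \phi(M)$ for every pure submodule $L\leq M$; this last property is indeed one of the equivalent characterisations of purity. Consequently pp-implications are preserved under all three closure operations, and intersecting over $0\leq i<d$ gives the lemma. The only step that is not essentially formal is the pure-submodule case, which rests on the pp-characterisation of purity; the product and direct-limit cases are automatic since $\Hom_{R}(R^{n},-)$ and cohomology both commute with them, the latter because direct limits and products are exact in $\Mod{R}$.
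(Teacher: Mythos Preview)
Your proof is correct. Both arguments rest on the same underlying fact, namely that for a finitely presented module $k$ over a Noetherian ring the functors $\t{Ext}_{R}^{i}(k,-)$ are determined by pp-pairs, so their vanishing cuts out a definable class. The difference is one of packaging: the paper invokes the functor-category formalism, observing that each $\t{Ext}_{R}^{i}(k,-)$ is a finitely presented functor on $\mod R$ and then quoting the general result that the joint vanishing locus of any set of such functors is definable. You instead unwind this explicitly, writing down a free resolution of $k$, identifying $\ker D_{i+1}$ and $\t{im}\,D_{i}$ as pp-definable subgroups, and checking the three closure properties by hand. Your route is more elementary and self-contained, not requiring the reader to know the correspondence between finitely presented functors and pp-pairs; the paper's route is quicker once that correspondence is taken for granted. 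One small remark: in your final paragraph the appeal to exactness of products and direct limits is really a direct verification that $\t{Ext}_{R}^{i}(k,-)$ commutes with these operations, which is a cleaner way to phrase the product and direct-limit cases than going through pp-formulas at all; only the pure-submodule case genuinely needs the pp viewpoint, as you note.
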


\begin{proof}
The residue field $k$ is finitely presented, so by \cite[Theorem 10.2.35.(c)]{psl}, the functors $\t{Ext}_{R}^{i}(k,-):\mod R\ra \t{Ab}$ are finitely presented for every $i\geq 0$. Set $X=\{\t{Ext}_{R}^{i}(k,-): 0\leq i<\t{dim }R\}$; so, by \cite[Cor. 10.2.32]{psl}, the subcategory $\mc{X}=\{M\in\Mod R: FX=0\t{ for all } F\in X\}$ is a definable subcategory. But $\mc{X}$ is just $\hcm{R}$ by definition.
\end{proof}

\ni
Clearly the finitely generated modules in $\hcm{R}$ are just $\cm{R}$, so if one wants to consider extensions of $\cm{R}$ that are definable, $\hcm{R}$ is a valid option. However, there will be a definable subcategory generated by $\cm{R}$, which we denote $\langle \cm{R}\rangle$. This is the smallest definable subcategory containing $\cm{R}$, and is its closure under direct limits, direct products and pure submodules. However, $\hcm{R}$ is a definable subcategory containing $\cm{R}$, so there is an inclusion of definable subcategories $\langle \cm{R}\rangle \subseteq \hcm{R}$.

\begin{prop}
Let $R$ be a Cohen-Macaulay ring with a canonical module. The definable subcategory of $\Mod{R}$ generated by $\cm{R}$ is $\lcm{R}$, the class of all $R$-modules that can be realised as a direct limit of modules in $\cm{R}$.
\end{prop}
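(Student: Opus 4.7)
The strategy is a two-sided inclusion, with one direction being formal and the other relying directly on Holm's result cited earlier in the introduction.

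For the easy direction, $\lcm{R} \subseteq \langle \cm{R} \rangle$, I would simply observe that any definable subcategory is closed under direct limits by definition. Since $\langle \cm{R} \rangle$ contains every maximal Cohen-Macaulay module, it must also contain every module that arises as a direct limit of such modules, giving the stated inclusion immediately.

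For the reverse inclusion $\langle \cm{R} \rangle \subseteq \lcm{R}$, the plan is to show that $\lcm{R}$ is already a definable subcategory of $\Mod{R}$; combined with the trivial fact that $\cm{R} \subseteq \lcm{R}$ (realising each module as a constant directed system), the minimality of $\langle \cm{R} \rangle$ among definable subcategories containing $\cm{R}$ yields the desired containment. To see that $\lcm{R}$ is definable, one needs closure under direct limits (automatic, as a direct limit of direct limits is a direct limit), closure under pure submodules, and closure under direct products. The first two properties hold in considerable generality for the direct-limit closure of a class of finitely presented modules, but closure under direct products is precisely the obstruction that does not hold in general. This is where the hypothesis that $R$ admits a canonical module becomes essential, and it is exactly the content of Holm's Theorem B (\cite[Theorem B]{holm}), which establishes all three closure properties for $\lcm{R}$ under this assumption.

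The main obstacle is therefore closure of $\lcm{R}$ under direct products, and rather than reproving it I would simply invoke Holm's theorem. Once this is in hand, combining the two inclusions completes the proof that $\langle \cm{R} \rangle = \lcm{R}$.
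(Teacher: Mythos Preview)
Your argument is correct, but it follows a different path from the paper's. You argue by two-sided inclusion, with the substantive step being that $\lcm{R}$ is itself definable, for which you invoke Holm's Theorem~B (the Tor-vanishing characterisation makes definability clear). The paper instead appeals to a general criterion from Prest's book (\cite[Cor.~3.4.37]{psl}): for a class $\mc{C}\subseteq\mod R$ closed under finite direct sums, the definable closure $\langle\mc{C}\rangle$ coincides with $\rlim\mc{C}$ provided $\mc{C}$ is preenveloping in $\mod R$. The paper then cites Holm's Theorem~C (not Theorem~B) to verify that $\cm{R}$ is preenveloping in $\mod R$. Your route is arguably more direct and transparent---one simply checks the target class is definable---whereas the paper's route isolates a reusable structural criterion (preenveloping $\Rightarrow$ definable closure equals $\rlim$-closure) that applies beyond this particular situation. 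Both are short once the relevant result of Holm is in hand; they just lean on different parts of his paper.
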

\begin{proof}
Since $\cm{R}$ is closed under finite direct sums, it suffices, by \cite[Cor. 3.4.37]{psl}, to show that $\cm{R}$ is preenveloping in $\mod{R}$. But this is {\cite[Thm. C]{holm}}.
\end{proof}

\ni
H. Holm characterised the modules in $\lcm{R}$ and his descriptions will enable us to consider its differences with $\hcm{R}$. Before doing this, we need to recall some more definitions.

\begin{defn}
Let $A$ be a Cohen-Macaulay ring admitting a canonical module $\Omega$. The \ti{trivial extension} of $A$ by $\Omega$ is the ring $A\ltimes\Omega$ whose underlying abelian group is $A\oplus\Omega$ and whose multiplication is given by
$$(a_{1},\omega_{1})(a_{2},\omega_{2})=(a_{1}a_{2},a_{1}\omega_{2}+a_{2}\omega_{1}),$$
for any $r_{1},r_{2}\in A$ and $\omega_{1},\omega_{2}\in\Omega$.
\end{defn}
\ni
Restriction of scalars along the inclusion ring homomorphism $i:A\ra A\ltimes\Omega$ gives a functor $U:\Mod{(A\ltimes\Omega})\ra \Mod{A}$ called the \ti{underlying functor}: if $N$ is an $A\ltimes\Omega$-module, then $U(N)$ has the same abelian group as $N$, and the $A$-action is given by $a\cdot n=(a,0)n$ for any $a\in A$, $n\in N$. $U$ is an exact functor that commutes with both direct and inverse limits, see {\cite[1.6, 1.7]{fgr}}. Doing the corresponding construction for the projection $p:A\ltimes\Omega\ra A$ gives a functor $Z:\Mod{A}\ra \Mod{(A\ltimes\Omega)}$ that shares the same properties as $U$, and since $p\circ i$ is identity on $A$, the composition $UZ$ is the identity functor on $\Mod{A}$. Since we assumed $A$ is commutative, it is clear that so is $A\ltimes\Omega$. We can list some of its properties as a ring.
\begin{thm}
Let $A$ and $\Omega$ be as in the above definition.
\begin{enumerate}
\item $A\ltimes\Omega$ is a commutative Noetherian local ring.
\item $A\ltimes\Omega$ and $A$ have the same Krull dimension.
\item $A\ltimes\Omega$ is a Gorenstein local ring.
\end{enumerate}
\end{thm}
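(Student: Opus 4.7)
The three claims have increasing difficulty, so my plan is to dispatch them in order, with the Gorenstein statement taking up most of the work.

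For (1), I would first observe that commutativity of $A\ltimes\Omega$ is immediate from the multiplication formula, since $A$ is commutative and so $\Omega$ is a symmetric bimodule. Because $\Omega$ is finitely generated over $A$ and $A$ is Noetherian, $A\ltimes\Omega$ is finitely generated as an $A$-module, hence Noetherian as a ring. To identify the unique maximal ideal as $\mf{M}:=\mf{m}\oplus\Omega$, I would check that this is a proper ideal by the multiplication rule, while any $(a,\omega)\notin\mf{M}$ has $a\in A\setminus\mf{m}$ a unit and hence is invertible with explicit inverse $(a^{-1},-a^{-2}\omega)$. For (2), I would exhibit the ideal $J:=0\oplus\Omega$, which is square-zero (the second coordinate of any product from $J\cdot J$ vanishes), and note that $(A\ltimes\Omega)/J\cong A$. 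Since nilpotent ideals do not alter the Krull dimension of the quotient, this gives $\t{dim}(A\ltimes\Omega)=\t{dim }A$.

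For (3), my strategy is to show that $A\ltimes\Omega$ is Cohen-Macaulay with canonical module isomorphic to $A\ltimes\Omega$ itself, because by Proposition \ref{ld} and the equivalent characterisation of Gorenstein rings cited earlier this immediately yields the Gorenstein property. The Cohen-Macaulay part comes from the short exact sequence of $A$-modules
$$0\ra\Omega\ra A\ltimes\Omega\ra A\ra 0$$
arising from the split inclusion and projection. Both $\Omega$ and $A$ have Ext-depth equal to $d$ as $A$-modules, so the depth lemma \ref{dl} gives $\t{E-dp}(\mf{m},A\ltimes\Omega)\geq d$. Because $\Omega$ is finitely generated, $\mf{M}$ and $\mf{m}\cdot (A\ltimes\Omega)$ have the same radical, so this Ext-depth agrees whether computed with $k$ or with the residue field $k$ of $A\ltimes\Omega$, forcing $\t{E-dp }A\ltimes\Omega=d=\t{dim}(A\ltimes\Omega)$.

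To identify the canonical module, I would reduce to the complete case (Gorenstein descends and ascends along completion of a local ring) and apply Grothendieck local duality \ref{gd} term-by-term to the above exact sequence. This yields
$$H_{\mf{m}}^{d}(A\ltimes\Omega)\simeq H_{\mf{m}}^{d}(\Omega)\oplus H_{\mf{m}}^{d}(A)\simeq \Hom_{A}(\Omega,\Omega)^{\vee}\oplus \Omega^{\vee}\simeq A^{\vee}\oplus \Omega^{\vee}\simeq (A\ltimes\Omega)^{\vee},$$
and Matlis duality then identifies the canonical module of $A\ltimes\Omega$ with $A\ltimes\Omega$.

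The main obstacle I anticipate is that these identifications are \emph{a priori} statements about $A$-module structures, whereas the definition of a canonical module, and hence of the Gorenstein property, must be matched on the level of $A\ltimes\Omega$-modules. Resolving this requires tracking the multiplication-by-$\omega$ action of the nilpotent part through the local duality isomorphism; alternatively, one can invoke Reiten's classical theorem on trivial extensions, which packages precisely this check.
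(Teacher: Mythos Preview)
The paper does not prove this theorem at all: it simply records the references, citing Anderson--Winders for (1) and (2) and Reiten's paper for (3). Your proposal therefore takes a genuinely different route by supplying direct arguments.

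Your treatments of (1) and (2) are correct and standard: the explicit inverse $(a^{-1},-a^{-2}\omega)$ identifies the units, and the square-zero ideal $0\oplus\Omega$ with quotient $A$ pins down the Krull dimension. For (3), your Cohen--Macaulay argument via the depth lemma and radical comparison $\sqrt{\mf{m}(A\ltimes\Omega)}=\mf{M}$ is sound, and the local-duality computation correctly gives $H_{\mf{m}}^{d}(A\ltimes\Omega)\simeq A^{\vee}\oplus\Omega^{\vee}$ as $A$-modules. However, the gap you flag is genuine and not merely cosmetic: the canonical module of $A\ltimes\Omega$ is the Matlis dual of $H_{\mf{M}}^{d}(A\ltimes\Omega)$ taken over $A\ltimes\Omega$, and the injective hull $E_{A\ltimes\Omega}(k)\simeq\Hom_{A}(A\ltimes\Omega,E_{A}(k))$ differs from $E_{A}(k)$. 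Matching the two duals, and verifying that the resulting $A\ltimes\Omega$-module structure on $A^{\vee}\oplus\Omega^{\vee}$ agrees with that of $(A\ltimes\Omega)^{\vee_{A\ltimes\Omega}}$, is exactly the content of Reiten's argument. Your fallback---invoking Reiten directly---is precisely what the paper does, so at that point the two approaches converge. What your write-up buys is a transparent explanation of why (1) and (2) hold and why $A\ltimes\Omega$ is Cohen--Macaulay, at the cost of leaving the hardest step as a citation anyway.
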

\ni
The first two results can be found in {\cite[Thm. 3.2]{ideal}}, while the third result can be found at {\cite[Theorem 7]{reiten}}. 
\\
\\
Two classes of Gorenstein modules will be particularly useful in understanding the category $\lcm{R}$. The definitions given here are not the traditional ones, which can be found in \cite[Chapter 10]{rha}, but are equivalent and suit our purposes. 

\begin{defn}
Let $R$ be a Gorenstein local ring.
\begin{enumerate}
\item \cite[11.5.3]{rha} A finitely generated $R$-module $M$ is \ti{Gorenstein projective} if $\t{Ext}_{R}^{i}(M,R)=0$ for all $i\geq 1$;
\item \cite[10.3.8]{rha} An $R$ module $M$ is \ti{Gorenstein flat} if $\t{Tor}_{i}^{R}(M,E)=0$ for all injective $R$-modules $E$ and all $i\geq 1$.
\end{enumerate}
\end{defn}
\ni
We will state a brief lemma giving some properties showing how Gorenstein projective and flat modules relate to both each other and what we have already seen.

\begin{lem}\label{props}
Let $R$ be a Gorenstein local ring.
\begin{enumerate}
\item $M$ is a Gorenstein flat $R$-module if and only if $M$ is the direct limit of a directed system of finitely presented Gorenstein projective $R$-modules.
\item Any finitely presented $R$-module is Gorenstein projective if and only if it is Cohen-Macaulay.
\end{enumerate}
\end{lem}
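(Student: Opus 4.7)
I plan to prove the two parts separately, tackling (2) first since it feeds into (1). For the forward direction of (2), Proposition \ref{ld}(2) suffices immediately: since $R$ is Gorenstein we have $\Omega \simeq R$, and a finitely presented Cohen-Macaulay module $M$ therefore satisfies $\t{Ext}_R^i(M,R) \simeq \t{Ext}_R^i(M,\Omega) = 0$ for every $i \geq 1$, which is exactly the Gorenstein projective condition. For the converse, suppose $M$ is finitely presented with $\t{Ext}_R^i(M,R) = 0$ for all $i \geq 1$. Passing to the $\mf{m}$-adic completion $\hat R$ (still Gorenstein of dimension $d$) and using flatness of $R \to \hat R$, we obtain $\t{Ext}_{\hat R}^i(\hat M, \hat R) \simeq \t{Ext}_R^i(M,R) \otimes_R \hat R = 0$ for $i \geq 1$. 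Grothendieck local duality \ref{gd} applied over $\hat R$ then gives $H_{\mf{m}\hat R}^{d-i}(\hat M) \simeq (\t{Ext}_{\hat R}^i(\hat M, \hat R))^{\vee} = 0$ for all $i \geq 1$, so $H_{\mf{m}\hat R}^{j}(\hat M) = 0$ for $j < d$; faithful flatness of $\hat R$ and the standard compatibility of local cohomology with completion pull this vanishing back to $R$, showing $M$ is Cohen-Macaulay.

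For part (1), the ``$\Leftarrow$'' direction is routine. Any finitely presented Gorenstein projective module $N$ is in fact Gorenstein flat: over a Gorenstein local ring of finite Krull dimension every injective module has finite flat dimension, and this is precisely the ingredient needed to transfer total acyclicity of a complete projective resolution of $N$ with respect to $\t{Hom}_R(-,R)$ to total acyclicity with respect to $E \otimes_R -$ for each injective $E$; a careful account is in \cite[Ch.~10]{rha}. Combined with the fact that $\t{Tor}$ commutes with filtered colimits, so that the class of Gorenstein flat modules is closed under direct limits, this gives the implication.

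The forward implication of (1) is the substantive content of the lemma and the main obstacle. It is a theorem essentially due to Enochs and Jenda \cite[Ch.~10]{rha}: given a Gorenstein flat module $M$ one must exhibit it as a direct limit of finitely presented Gorenstein projectives. The strategy is Govorov--Lazard in flavour, first writing $M$ as a filtered colimit of finitely generated submodules and then refining the system so that each transition module is Gorenstein projective. Part (2) is used to identify the finitely presented Gorenstein projectives with Cohen-Macaulay modules, while the Tor-vanishing characterisation of Gorenstein flatness provides the rigidity needed for the refinement step. I would cite the reference for the full technical argument rather than reproduce it.
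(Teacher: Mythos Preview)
Your proposal is correct, and in fact it supplies considerably more argument than the paper itself: the paper does not prove this lemma at all but simply refers the reader to \cite[Theorem 10.3.8.4]{rha} for (1) and \cite[Corollary 10.2.7]{rha} for (2). Your sketches for (2) and for the easy direction of (1) are sound, and your decision to cite \cite{rha} for the substantive forward implication of (1) matches exactly what the paper does for the entire lemma.
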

\ni
These statements (in more generality), and their proofs, can be found at \cite[Theorem 10.3.8.4]{rha} and \cite[Corollary 10.2.7]{rha} respectively. We are now in a position to give Holm's description of $\lcm{R}$.

\begin{thm}{\cite[Thm. B]{holm}}\label{holm}
Let $R$ be a Cohen-Macaulay ring admitting a canonical module $\Omega$. The following are equivalent for an $R$-module $M$.
\begin{enumerate}
\item $M$ is in $\lcm{R}$,
\item Every system of parameters for $R$ is a weak $M$-sequence.
\item $M$ is Gorenstein flat when viewed as an $R\ltimes \Omega$-module, that is $Z(M)\in\t{GFlat}(R\ltimes\Omega)$.
\item For every $R$-sequence $\bm{x}$, $\t{Tor}_{1}(R/(\bm{x}),M)=0$, where $(\bm{x})$ is the ideal of $R$ generated by $\bm{x}$.
\item For every $R$-sequence $\bm{x}$ and $i\geq 1$, $\t{Tor}_{i}(R/(\bm{x}),M)=0$, where $(\bm{x})$ is the ideal of $R$ generated by $\bm{x}$.

\end{enumerate}
\end{thm}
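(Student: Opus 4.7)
The strategy is to prove the homological equivalences $(2) \Leftrightarrow (4) \Leftrightarrow (5)$ directly by Koszul methods, then exhibit $(1) \Leftrightarrow (3)$ via the restriction/extension functors $Z$ and $U$ between $\Mod{R}$ and $\Mod{(R\ltimes\Omega)}$, and finally close the cycle by noting the easy $(1) \Rightarrow (5)$ together with the hard step $(5) \Rightarrow (3)$.

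For the homological block, the main tool is the Koszul complex: whenever $\bm{x} = x_{1},\ldots,x_{n}$ is an $R$-sequence (which is automatic for any system of parameters because $R$ is Cohen-Macaulay), the Koszul complex $K(\bm{x})$ is a free resolution of $R/(\bm{x})$, so $\t{Tor}_{i}^{R}(R/(\bm{x}),M) \cong H_{i}(\bm{x};M)$. Vanishing of the positive Koszul homology characterises $\bm{x}$ as a weak $M$-sequence, giving the equivalence of (5) with ``every $R$-sequence is a weak $M$-sequence'', which in turn is equivalent to (2) because every $R$-sequence in a Cohen-Macaulay ring extends to a system of parameters (and restrictions of weak $M$-sequences are weak $M$-sequences). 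For $(4) \Rightarrow (5)$, I would induct on the length of $\bm{x}$: writing $\bm{x} = (\bm{x}',x_{n})$, the $R$-regularity of $\bm{x}'$ supplies a short exact sequence $0 \ra R/(\bm{x}') \xra{x_{n}} R/(\bm{x}') \ra R/(\bm{x}) \ra 0$, and the associated Tor long exact sequence propagates vanishing from $i=1$ to all $i\geq 1$.

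For $(1) \Leftrightarrow (3)$, the functors $Z$ and $U$ together with Lemma \ref{props} supply the bridge. If $M \in \cm{R}$, then $Z(M)$ is finitely generated and Cohen-Macaulay over $R\ltimes\Omega$: this uses that $R$ and $R\ltimes\Omega$ have the same Krull dimension and that a system of parameters for $R$ remains one for $R\ltimes\Omega$ along $i:R\hra R\ltimes\Omega$ (since $\Omega$ is nilpotent in the extension), so the depth and dimension of $Z(M)$ over $R\ltimes\Omega$ equal those of $M$ over $R$. Hence $Z(M)$ is Gorenstein projective by Lemma \ref{props}(2). Since $Z$ commutes with direct limits, $(1) \Rightarrow (3)$ then follows from Lemma \ref{props}(1). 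Conversely, if $Z(M) \in \t{GFlat}(R\ltimes\Omega)$, then by Lemma \ref{props}(1) we may write $Z(M) = \rlim N_{i}$ with each $N_{i}$ finitely presented Gorenstein projective, hence Cohen-Macaulay over $R\ltimes\Omega$; applying the exact, direct-limit-preserving functor $U$ (with $UZ=\t{id}$) yields $M = \rlim U(N_{i})$, and the dimension/depth correspondence shows $U(N_{i}) \in \cm{R}$, giving (1).

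The implication $(1) \Rightarrow (5)$ is then immediate: for $M = \rlim M_{j}$ with $M_{j} \in \cm{R}$ and $\bm{x}$ an $R$-sequence, extend $\bm{x}$ to a system of parameters $\bm{y}$; Cohen-Macaulayness of each $M_{j}$ makes $\bm{y}$ an $M_{j}$-sequence, so $\t{Tor}_{i}^{R}(R/(\bm{x}),M_{j})=0$ for all $i\geq 1$, and this persists under direct limits. The principal obstacle is closing the cycle with $(5) \Rightarrow (3)$. My plan is to exploit the Gorenstein-ring characterisation of Gorenstein flatness by Tor-vanishing against injectives: $Z(M)$ is Gorenstein flat over $R\ltimes\Omega$ if and only if $\t{Tor}_{i}^{R\ltimes\Omega}(Z(M),E)=0$ for every injective $R\ltimes\Omega$-module $E$ and every $i\geq 1$. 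Using that $R\ltimes\Omega$ is a finitely generated $R$-algebra with $\Omega$ nilpotent, one can describe the injective $R\ltimes\Omega$-modules in terms of injective $R$-modules, and then a change-of-rings spectral sequence along $i:R\hra R\ltimes\Omega$ should allow one to reduce the required $R\ltimes\Omega$-Tor vanishing for $Z(M)$ to the $R$-Tor vanishing of $M$ against quotients $R/(\bm{x})$ by parameter sequences—precisely the content of (5). Controlling this spectral sequence and the injective classification is the technically delicate part.
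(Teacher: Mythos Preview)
The paper does not supply its own proof of this theorem: it is stated with an explicit citation to \cite[Thm.~B]{holm} and then immediately used as a black box (``Using Holm's result, we are now in a position to\ldots''). There is therefore no argument in the paper against which to compare your proposal.

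That said, a brief remark on the sketch itself. The Koszul block $(2)\Leftrightarrow(4)\Leftrightarrow(5)$ and the easy implication $(1)\Rightarrow(5)$ are standard and correct. In your treatment of $(1)\Leftrightarrow(3)$ via $Z$ and $U$, the direction $(3)\Rightarrow(1)$ is essentially right: finiteness of $R\ltimes\Omega$ over $R$ keeps $U(N_{i})$ finitely generated, and because the maximal ideal of $R\ltimes\Omega$ is $\mf{m}\oplus\Omega$ with $\Omega$ nilpotent, depth and dimension transfer along $U$. The genuine gap is exactly where you flag it: closing the cycle at $(5)\Rightarrow(3)$. Your plan to reduce $\t{Tor}^{R\ltimes\Omega}$-vanishing of $Z(M)$ against injectives to $\t{Tor}^{R}$-vanishing of $M$ against parameter quotients is only a heuristic as written; you have not identified which change-of-rings spectral sequence you intend to use, nor explained how the indecomposable injective $R\ltimes\Omega$-modules (which are of the form $\Hom_{R}(R\ltimes\Omega,E(R/\mf{p}))$) decompose in a way that makes parameter ideals appear. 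This step is where the actual content of Holm's theorem lies, and your outline does not yet supply it; if you want a complete argument you will need either to carry that reduction out in detail or to consult \cite{holm} directly.
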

\ni
Recall that a \ti{system of parameters} for $R$ is a set of $d$ elements $\bm{y}=y_{1},\cdots,y_{d}$ in $R$ such that $\sqrt{\bm{y}}=\mf{m}$. These are precisely the maximal $R$-sequences.
\ni
Using Holm's result, we are now in a position to directly compare the categories $\hcm{R}$ and $\lcm{R}$, at least in the situation where $R$ admits a canonical module.

\begin{thm}\label{sep}
Let $(R,\mf{m},k)$ be a Cohen-Macaulay ring. If $\t{dim}\,R=1$ then $\lcm{R}$ and $\hcm{R}$ coincide. If $R$ also admits a canonical module, then $\lcm{R}=\hcm{R}$ if and only if $\t{dim}\,R=1$.
\end{thm}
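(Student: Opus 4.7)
The plan is to prove the two statements separately, beginning with the dimension one case. The inclusion $\lcm{R}\subseteq\hcm{R}$ is immediate: $\hcm{R}$ is definable (hence closed under direct limits) and contains $\cm{R}$. For the reverse inclusion, take $M\in\hcm{R}$, so $\Hom_R(k,M)=0$, and write $M=\varinjlim N_i$ as the filtered colimit of its finitely generated submodules. For each $i$ the inclusion $N_i\hookrightarrow M$ yields $\Hom_R(k,N_i)\hookrightarrow\Hom_R(k,M)=0$, so by Rees's theorem $\t{dp}\,N_i\geq 1=d$ and hence $N_i\in\cm{R}$ (or is zero). This exhibits $M$ as a direct limit of modules in $\cm{R}$.

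For the second statement, the $\Leftarrow$ direction is the dimension one case just handled, so the task is to show that if $R$ admits a canonical module and $d\geq 2$ then $\hcm{R}$ strictly contains $\lcm{R}$. I will produce an explicit separating module. Since $\t{dp}\,R=d\geq 1$, choose a non-zero divisor $x_1\in\mf{m}$, and let $\mf{p}$ be a minimal prime over $(x_1)$. By Krull's principal ideal theorem $\t{height}(\mf{p})=1$, and since $d\geq 2$ this forces $\mf{p}\neq\mf{m}$. Set $M=E_R(R/\mf{p})$.

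As $M$ is injective, $\t{Ext}_R^i(k,M)=0$ for all $i\geq 1$. The standard fact that $\t{Ass}(E_R(R/\mf{p}))=\{\mf{p}\}$, combined with $\mf{p}\neq\mf{m}$, gives $\Hom_R(k,M)=0$, so $M\in\hcm{R}$. On the other hand, since $R$ is Cohen-Macaulay, $x_1$ extends to a system of parameters $\bm{x}=x_1,\ldots,x_d$; but $x_1\in\mf{p}$ is a zero-divisor on $M$ (the zero-divisors of $M$ are the union of its associated primes, here just $\mf{p}$), so $\bm{x}$ is not a weak $M$-sequence and Theorem \ref{holm}(2) forces $M\notin\lcm{R}$. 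The main verifications needed are $\t{Ass}(E_R(R/\mf{p}))=\{\mf{p}\}$ (a consequence of the essentiality of $R/\mf{p}$ in its injective hull) and the ability to complete any $R$-sequence in a Cohen-Macaulay local ring to a system of parameters; both are standard and supply the only non-formal inputs to the argument.
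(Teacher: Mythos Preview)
Your proof is correct and follows essentially the same approach as the paper's own proof: for $d=1$ you write $M$ as a filtered colimit of its finitely generated submodules and use that $\Hom_R(k,-)$ vanishes on them, and for $d\geq 2$ you produce the same separating module $E_R(R/\mf{p})$ for $\mf{p}$ a height-one prime over a regular element. The only cosmetic differences are in the justifications of $\Hom_R(k,E_R(R/\mf{p}))=0$ (you use $\t{Ass}(E_R(R/\mf{p}))=\{\mf{p}\}$, the paper factors any map $k\to E_R(R/\mf{p})$ through $E(k)$), which are equivalent standard arguments.
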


\begin{proof}
Assume $\t{dim }R=1$ and $M\in\hcm{R}$. We can write $M=\rlim M_{i}$ where each $M_{i}$ is a finitely generated submodule of $M$, but since $\hcm{R}$ is closed under submodules, each $M_{i}\in\cm{R}$, so $M\in\lcm{R}$, which proves the first claim. 
\\
Now assume that $\t{dim }R>1$ and $R$ admits a canonical module. Let $x\in \mf{m}$ be a regular element and $\mf{p}$ be a minimal prime of the principal ideal $(x)$. By \cite[Cor. 11.17]{am}, $\mf{p}$ is a height one prime, and since $\t{dim }R>1$ it follows that $\mf{p}\neq \mf{m}$. Consider the indecomposable injective module $E:=E(R/\mf{p})$. We claim that $E\in\hcm{R}$ but not in $\lcm{R}$. Since $E$ is injective, we have $\t{Ext}_{R}^{i}(k,E)=0$ for all $i>0$, so in order for $E\in\hcm{R}$, it suffices to show $\Hom_{R}(k,E)=0$. Notice that any morphism $k\ra E$ will factor through $E(k)$, but since $\mf{p}\neq \mf{m}$ we have $\Hom_{R}(E(k),E)=0$, so it must be that $\Hom_{R}(k,E)=0$ and $E\in\hcm{R}$. To show that $E$ is not in $\lcm{R}$, we find a system of parameters that is not a weak $E$-sequence. Since $x$ is a regular element of $R$, we may extend it to a system of parameters $\bm{x}$. We claim $\bm{x}$ is not a weak $E$-sequence. Indeed, since $E=E(R/\mf{p})$, the unique associated prime of $E$ is $\mf{p}$, so there is an $e\in E$ with $\mf{p}=\t{Ann}(e)$. Since, by construction, we have $x\in\mf{p}$, it follows that $xe=0$, so $x$ is not a regular element on $E$, and therefore $\bm{x}$ is a system of parameters that is not a weak $E$-sequence.

\end{proof}
\ni
Therefore whenever $\t{dim }R>1$ and $R$ admits a canonical module, we have two different definable subcategories of $\Mod{R}$, both of whose finitely generated modules coincide precisely with $\cm{R}$. We can then consider which properties of $\cm{R}$ are reflected in these larger categories in an attempt to further differentiate between them. As stated, the canonical duality $\Hom_{R}(-,\Omega)$ is vital in understanding $\cm{R}$. We will now see how this functor behaves on $\lcm{R}$ and $\hcm{R}$.

\begin{prop} \label{hom}
Let $R$ be a complete Cohen-Macaulay ring. Then the functor $\Hom_{R}(-,\Omega)$ is an endofunctor on $\lcm{R}$ and $\Omega$ is an injective object in the category.
\end{prop}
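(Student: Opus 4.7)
The plan is to handle the two claims separately, with Grothendieck local duality (Theorem~\ref{gd}) and the inclusion $\lcm{R}\subseteq\hcm{R}$ as the central inputs.

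For the injectivity statement I will show directly that $\t{Ext}^i_R(C,\Omega)=0$ for every $C\in\lcm{R}$ and every $i\geq 1$; this makes $\Hom_R(-,\Omega)$ exact on short exact sequences inside $\lcm{R}$, so $\Omega$ is an injective object in the category. By local duality, $\t{Ext}^i_R(C,\Omega)\simeq H_{\mf m}^{d-i}(C)^\vee$, and since $C\in\lcm{R}\subseteq\hcm{R}$ we have $H_{\mf m}^j(C)=0$ for every $j<d$. Taking $j=d-i$ with $i\geq 1$ completes this half.

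For the endofunctor statement I will verify condition (2) of Theorem~\ref{holm} for $N:=\Hom_R(M,\Omega)$: namely that every system of parameters $\bm x=x_1,\dots,x_d$ of $R$ is a weak $N$-sequence. Writing $R_i=R/(x_1,\dots,x_i)$, $\Omega_i=\Omega/(x_1,\dots,x_i)\Omega$ and $M_i=M/(x_1,\dots,x_i)M$, it is standard that each $R_i$ is a complete Cohen-Macaulay ring with canonical module $\Omega_i$, and $M_i\in\lcm{R_i}$ follows by restricting the weak-sequence property of $\bm x$ on $M$ over $R$ to a system of parameters of $R_i$. The plan is then to induct on $i$, establishing simultaneously that
\[
N/(x_1,\dots,x_i)N\;\simeq\;\Hom_{R_i}(M_i,\Omega_i)\quad\text{and}\quad x_{i+1}\text{ is a non-zerodivisor on this quotient.}
\]

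The inductive step is the bulk of the work and the place where I expect the main effort to lie. It consists of applying $\Hom_{R_{i-1}}(-,\Omega_{i-1})$ to the short exact sequence
\[
0\to M_{i-1}\xra{x_i}M_{i-1}\to M_i\to 0
\]
of $R_{i-1}$-modules, which is exact because $\bm x$ is a weak $M$-sequence. The resulting long exact sequence collapses to a short exact sequence once two vanishings are noted: $\Hom_{R_{i-1}}(M_i,\Omega_{i-1})=0$, which holds because $x_i$ annihilates $M_i$ while being a non-zerodivisor on $\Omega_{i-1}$; and $\t{Ext}^1_{R_{i-1}}(M_{i-1},\Omega_{i-1})=0$, which is exactly the injectivity half of the proposition applied over $R_{i-1}$. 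The connecting piece is identified via a one-step change-of-rings spectral sequence: since $\t{Ext}^\bullet_{R_{i-1}}(R_i,\Omega_{i-1})$ is concentrated in degree $1$ with value $\Omega_i$, one obtains $\t{Ext}^1_{R_{i-1}}(M_i,\Omega_{i-1})\simeq\Hom_{R_i}(M_i,\Omega_i)$. Combined with the inductive hypothesis this yields the short exact sequence
\[
0\to N/(x_1,\dots,x_{i-1})N\xra{x_i}N/(x_1,\dots,x_{i-1})N\to\Hom_{R_i}(M_i,\Omega_i)\to 0,
\]
which completes the induction and hence the proof.
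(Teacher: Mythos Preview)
Your proof is correct but follows a genuinely different route from the paper's.

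For the injectivity of $\Omega$, your argument via Grothendieck local duality is cleaner: $\t{Ext}_R^i(C,\Omega)\simeq H_{\mf m}^{d-i}(C)^\vee=0$ for $i\geq 1$ since $C\in\hcm{R}$. The paper instead uses that $\Omega$, being finitely generated over a complete ring, is pure-injective, so $\t{Ext}_R^j(\rlim M_i,\Omega)\simeq\llim\t{Ext}_R^j(M_i,\Omega)=0$ by the classical case $M_i\in\cm{R}$. Your route is shorter and is essentially the observation the paper itself uses later for $\hcm{R}$.

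For the endofunctor claim the strategies diverge more substantially. The paper verifies condition (4) of Theorem~\ref{holm}: using local duality and the Ext--Tor relations it rewrites $\t{Tor}_1(R/(\bm x),\Hom_R(M,\Omega))$ as the Matlis dual of $\t{Ext}_R^1(R/(\bm x),H_{\mf m}^d(M))$, then commutes $H_{\mf m}^d$ and $\t{Ext}^1$ past the direct limit $M=\rlim M_i$ to reduce to the known vanishing for $M_i\in\cm{R}$. Your argument instead verifies condition (2), never touching the direct-limit presentation: by induction along a fixed system of parameters you identify $N/(x_1,\dots,x_i)N$ with $\Hom_{R_i}(M_i,\Omega_i)$ via the Rees shift $\t{Ext}^1_{R_{i-1}}(M_i,\Omega_{i-1})\simeq\Hom_{R_i}(M_i,\Omega_i)$, feeding the already-proved injectivity of $\Omega_{i-1}$ in $\lcm{R_{i-1}}$ back into the long exact sequence. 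The paper's approach is shorter and more conceptual, leaning on the limit structure of $\lcm{R}$; yours is more elementary, avoiding any commutation of derived functors with limits, at the cost of the change-of-rings bookkeeping. Both reductions $M_i\in\lcm{R_i}$ and $\Omega_i$ canonical for $R_i$ are standard, so the argument goes through.
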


\begin{proof}
Let $M\in\lcm{R}$. We show that if $\bm{x}$ is an $R$-sequence, then $\t{Tor}_{1}(R/(\bm{x}),\Hom(M,\Omega))=0$. Since $R$ is complete, by local duality we have 
\begin{equation}\label{star}
\t{Tor}_{1}(R/(\bm{x}),\Hom(M,\Omega))\simeq\t{Tor}_{1}(R/(\bm{x}),\Hom(H_{\mf{m}}^{d}(M),E(k)))\simeq \t{Ext}^{1}(R/(\bm{x}),H_{\mf{m}}^{d}(M))^{\vee}
\end{equation}
and since $E(k)$ is an injective cogenerator, we see it is enough to show that $\t{Ext}_{R}^{1}(R/(\bm{x}),H_{\mf{m}}^{d}(M))=0$. Using the assumption $M\in\lcm{R}$, write $M=\rlim M_{i}$ where each $M_{i}\in\cm{R}$. As $R/(\bm{x})$ is finitely generated and $H_{\mf{m}}^{d}(-)$ commutes with direct limits, we have an isomorphism
$$\t{Ext}_{R}^{1}(R/(\bm{x}),H_{\mf{m}}^{d}(M))\simeq\rlim \t{Ext}_{R}^{1}(R/(\bm{x}),H_{\mf{m}}^{d}(M_{i})).$$
We know that $\Hom(M_{i},\Omega)\in\cm{R}$ for each $M_{i}$, and since $\cm{R}\subset\lcm{R}$ we must have $\t{Ext}_{R}^{1}(R/(\bm{x}),H_{\mf{m}}^{d}(M_{i}))=0$ by considering the isomorphisms in (\ref{star}). This shows the first result. Since $R$ is complete, all finitely generated modules are Matlis reflexive and therefore pure injective (see \cite[Prop. 5.3.7]{rha}). Therefore by {\cite[Lemma 3.3.4]{trlifaj}}, if $j \geq 1$ and $M\in\lcm{R}$, there is an isomorphism 
$$\t{Ext}_{R}^{j}(M,\Omega)\simeq \t{Ext}_{R}^{j}(\rlim M_{i},\Omega)\simeq \llim \t{Ext}_{R}^{j}(M_{i},\Omega).$$
We know $\t{Ext}_{R}^{j}(M_{i},\Omega)=0$ for all $j\geq 1$ as $\Omega$ is injective in $\cm{R}$, hence $\t{Ext}_{R}^{j}(M,\Omega)=0$. This shows the second claim.
\end{proof}

\ni
This is a weaker result than for the finitely generated case, since $\Omega$ is no longer an injective cogenerator. Indeed, if $M\in\lcm{R}$ has infinite Ext-depth, we have $\Hom_{R}(M,\Omega)\simeq H_{\mf{m}}^{d}(M)^{\vee} = 0$. We will consider the modules where this is not the case in due course. Let us now consider the corresponding result for $\hcm{R}$.

\begin{prop}\label{hom2}
Let $R$ be a complete Cohen-Macaulay ring of dimension $d$. Then $\Hom(-,\Omega)$ is an endofunctor on $\hcm{R}$ and $\Omega$ is an injective object in this category.
\end{prop}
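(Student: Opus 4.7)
The plan is to establish both assertions through a short derived-category computation.

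For the injectivity of $\Omega$, apply Grothendieck local duality (\ref{gd}) to obtain $\t{Ext}_R^i(M,\Omega) \simeq H_{\mf{m}}^{d-i}(M)^{\vee}$ for every $R$-module $M$ and every integer $i$. If $M\in\hcm{R}$ and $i\geq 1$, then $H_{\mf{m}}^{d-i}(M)=0$: when $1\leq i\leq d$ this holds because $d-i<d$ and $M\in\hcm{R}$, while for $i>d$ it holds because local cohomology vanishes in negative degrees. Hence $\t{Ext}_R^i(M,\Omega)=0$ for every $i\geq 1$, and since $\Omega\in\cm{R}\subseteq\hcm{R}$ this shows $\Omega$ is an injective object of $\hcm{R}$. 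As a byproduct, the complex $\mathbf{R}\Hom_R(M,\Omega)$ is quasi-isomorphic to $\Hom_R(M,\Omega)$ (in degree zero) in the derived category $D(R)$.

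For the endofunctor statement I need $\t{Ext}_R^j(k,\Hom_R(M,\Omega))=0$ for every $M\in\hcm{R}$ and every $j<d$. Applying the derived tensor-Hom adjunction together with the quasi-isomorphism just established,
$$\mathbf{R}\Hom_R(k,\Hom_R(M,\Omega))\;\simeq\;\mathbf{R}\Hom_R(k,\mathbf{R}\Hom_R(M,\Omega))\;\simeq\;\mathbf{R}\Hom_R(M,\mathbf{R}\Hom_R(k,\Omega)).$$
The defining property of the canonical module gives $\t{Ext}_R^i(k,\Omega)=k$ for $i=d$ and zero otherwise, so $\mathbf{R}\Hom_R(k,\Omega)$ has cohomology concentrated in a single degree and is quasi-isomorphic to $k[-d]$ in $D(R)$. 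Substituting, the right-hand side becomes $\mathbf{R}\Hom_R(M,k)[-d]$, whose $j$-th cohomology is $\t{Ext}_R^{j-d}(M,k)$. For $j<d$ this has negative index and hence vanishes, giving $\Hom_R(M,\Omega)\in\hcm{R}$ as required.

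The one step warranting care is the identification $\mathbf{R}\Hom_R(k,\Omega)\simeq k[-d]$ in $D(R)$, which is the standard fact that a complex whose cohomology is concentrated in a single degree is quasi-isomorphic to the corresponding shifted module. Everything else is formal once local duality supplies the initial Ext-vanishing; completeness of $R$ enters only in invoking Theorem \ref{gd}.
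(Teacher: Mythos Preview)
Your argument is correct. It differs from the paper's main proof, which dualises to a Tor computation and then invokes a Grothendieck spectral sequence
\[
\t{Tor}_p^R\bigl(M,\t{Tor}_q^R(k,H_{\mf{m}}^d(R))\bigr)\Rightarrow \t{Tor}_{p+q}^R\bigl(k,M\otimes_R H_{\mf{m}}^d(R)\bigr),
\]
collapsing it via the vanishing of $\t{Tor}_q^R(k,H_{\mf{m}}^d(R))$ for $q\neq d$. Your swap isomorphism $\mathbf{R}\Hom_R(k,\mathbf{R}\Hom_R(M,\Omega))\simeq\mathbf{R}\Hom_R(M,\mathbf{R}\Hom_R(k,\Omega))$ is essentially the derived-category statement underlying that spectral sequence, so the two proofs encode the same information; yours is considerably shorter because the identification $\mathbf{R}\Hom_R(k,\Omega)\simeq k[-d]$ replaces the collapse argument in one stroke. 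The paper also records (attributed to the referee) an alternative proof via Yassemi's depth formula $\t{E-dp}\,\Hom_R(M,\Omega)=\t{T-codp}\,M+\t{E-dp}\,\Omega$; your approach is closest in spirit to that one, sharing the preliminary observation $\mathbf{R}\Hom_R(M,\Omega)\simeq\Hom_R(M,\Omega)$, but is more self-contained since it does not appeal to an external width result.
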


\begin{proof}
Let $M\in\hcm{R}$. If $M$ has infinite Ext-depth then $\Hom_{R}(M,\Omega)=0$ by the above discussion. Therefore we may assume that $\t{E-dp }M=d = \t{dim }R$. Since $R$ is complete, we may use local duality \ref{gd} to show that $\t{Ext}_{R}^{i}(k,H_{\mf{m}}^{d}(M)^{\vee})=0$ for all $i<d$, and thus $M^{*}\in\hcm{R}$. By considering the Ext-Tor relations given in \ref{relations}, this is equivalent to showing that $\t{Tor}_{i}^{R}(k,H_{\mf{m}}^{d}(M))=0$ for all $i<d$. Since $d$ is the cohomological dimension of $R$, by \cite[6.1.10]{lc} there is a natural isomorphism $H_{\mf{m}}^{d}(-)\simeq -\otimes_{R}H_{\mf{m}}^{d}(R)$. Therefore 
$$\t{Tor}_{i}^{R}(k,H_{\mf{m}}^{d}(M))\simeq \t{Tor}_{i}^{R}(k, M\otimes_{R}H_{\mf{m}}^{d}(R)),$$ and we aim to show this is zero for all $i<d$. In order to do this, we will use a spectral sequence argument; in particular, we show that, with the given assumption on $M$, there is a Grothendieck spectral sequence
\begin{equation}\label{ss}
E_{2}^{p,q}=\t{Tor}_{p}^{R}(M,\t{Tor}_{q}^{R}(k, H_{\mf{m}}^{d}(R))) \implies \t{Tor}_{p+q}^{R}(k,M\otimes H_{\mf{m}}^{d}(R))
\end{equation}
By \cite[Thm. 10.59]{rot}, such a spectral sequence exists if $\t{Tor}_{i}^{R}(M,H_{\mf{m}}^{d}(R)\otimes P)=0$ for all $i\geq 1$ and every projective $R$-module $P$. Since $R$ is a local ring every projective module is free and as Tor commutes with direct sums, it suffices to show $\t{Tor}_{i}^{R}(M,H_{\mf{m}}^{d}(R))=0$ for all $i\geq 1$. Yet 
\begin{equation*}
\t{Tor}_{i}^{R}(M,H_{\mf{m}}^{d}(R))=0\iff \t{Tor}_{i}^{R}(M,H_{\mf{m}}^{d}(R))^{\vee}=0 \iff \t{Ext}_{R}^{i}(M, H_{\mf{m}}^{d}(R)^{\vee})=0.
\end{equation*}
But since $\Omega$ is finitely generated, it is Matlis reflexive, so $H_{\mf{m}}^{d}(R)^{\vee}\simeq \Omega$, and as $\t{Ext}_{R}^{i}(M,\Omega)=0$ for all $i\geq 0$ by \ref{gd} and assumption on the Ext-depth of $M$, it follows $\t{Tor}_{i}^{R}(M,H_{\mf{m}}^{d}(R))=0$.
Therefore, for any $M\in\hcm{R}$ with $\t{E-dp }M=d$ the spectral sequence (\ref{ss}) exists. However, $\t{Tor}_{i}^{R}(k,H_{\mf{m}}^{d}(R))=0$ for all $i\neq d$, since
$$\t{Tor}_{i}^{R}(k, H_{\mf{m}}^{d}(R))^{\vee}\simeq \t{Ext}_{R}^{i}(k, H_{\mf{m}}^{d}(R)^{\vee})\simeq \t{Ext}_{R}^{i}(k,\Omega),$$
and $\t{Ext}_{R}^{i}(k,\Omega)=0$ for all $i\neq d$ by virtue of $\Omega$ being the canonical module. Consequently, $E_{2}^{p,q}$ is zero whenever $q\neq d$, so the spectral sequence collapses on the first page, giving isomorphisms
$$\t{Tor}_{p}^{R}(M,\t{Tor}_{q}^{R}(k, H_{\mf{m}}^{d}(R))) \simeq \t{Tor}_{p+q}^{R}(k,M\otimes H_{\mf{m}}^{d}(R))$$
In particular, as $\t{Tor}_{p}^{R}(M,\t{Tor}_{q}^{R}(k, H_{\mf{m}}^{d}(R)))=0$ for $q\neq d$, we see $\t{Tor}_{i}^{R}(k, M\otimes H_{\mf{m}}^{d}(R))=0$ for all $i<d$. That $\Omega$ is injective follows immediately from \ref{gd}.
\end{proof}

\begin{cor}\label{edual}
If $\t{E-dp}(M)=d$, then $\t{E-dp}(M^{*})=d$.
\end{cor}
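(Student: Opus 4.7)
The strategy is to apply Proposition \ref{strook} to $M^{*}$: combined with $M^{*}\in\hcm{R}$ from Proposition \ref{hom2} (which already yields $\t{E-dp}(M^{*})\geq d$), establishing $\t{T-codp}(M^{*})=0$ will force $\t{E-dp}(M^{*})\leq d$ and hence equality.

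To compute $\t{Tor}_{*}^R(k,M^{*})$, I would first identify $M^{*}\simeq H_\mf{m}^d(M)^{\vee}$. This is built into the proof of \ref{hom2}: by local duality $\Omega\simeq H_\mf{m}^d(R)^{\vee}$, and since $d$ is the cohomological dimension $H_\mf{m}^d(-)\simeq -\otimes_R H_\mf{m}^d(R)$, so
\[
\Hom_R(M,\Omega)\simeq\Hom_R\bigl(M\otimes_R H_\mf{m}^d(R),E(k)\bigr)\simeq H_\mf{m}^d(M)^{\vee}.
\]
Lemma \ref{relations}(2) then gives $\t{Tor}_i^R(k,M^{*})\simeq\t{Ext}_R^{i}(k,H_\mf{m}^d(M))^{\vee}$ for all $i\geq 0$. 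Since $k$ is $\mf{m}$-torsion, $\t{RHom}_R(k,M)\simeq\t{RHom}_R(k,R\Gamma_\mf{m}(M))$, and as $M\in\hcm{R}$ the complex $R\Gamma_\mf{m}(M)$ is concentrated in degree $d$, equal to $H_\mf{m}^d(M)[-d]$. Therefore $\t{Ext}_R^i(k,H_\mf{m}^d(M))\simeq\t{Ext}_R^{i+d}(k,M)$; in particular
\[
\t{Tor}_0^R(k,M^{*})\simeq\t{Ext}_R^d(k,M)^{\vee},
\]
which is non-zero by the assumption $\t{E-dp}(M)=d$ together with the faithfulness of Matlis duality. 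Thus $\t{T-codp}(M^{*})=0$, and Proposition \ref{strook} applied to $M^{*}$ closes the argument.

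The only mildly non-routine step is the derived-category identity $R\Gamma_\mf{m}(M)\simeq H_\mf{m}^d(M)[-d]$ for $M\in\hcm{R}$. If one prefers to stay inside the toolkit already assembled in \ref{hom2}, the same conclusion is visible in the collapsed spectral sequence (\ref{ss}) by evaluating at $(p,q)=(0,d)$ and using that $\t{Tor}_d^R(k,H_\mf{m}^d(R))\simeq k$; this last isomorphism comes from Matlis-dualizing the defining identity $\t{Ext}_R^d(k,\Omega)\simeq k$ and noting that the source is annihilated by $\mf{m}$, hence a $k$-vector space whose Matlis dual coincides with its $k$-linear dual. Either route is essentially bookkeeping on top of \ref{hom2} and \ref{strook}.
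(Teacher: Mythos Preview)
Your proof is correct, and the alternative route you sketch at the end---evaluating the collapsed spectral sequence (\ref{ss}) at $(p,q)=(0,d)$ and using $\t{Tor}_{d}^{R}(k,H_{\mf{m}}^{d}(R))\simeq k$---is exactly the argument the paper gives. The paper reads off $\t{Tor}_{d}^{R}(k,H_{\mf{m}}^{d}(M))\simeq M\otimes_{R}k\neq 0$ from the spectral sequence (the non-vanishing coming from $\t{T-codp}(M)=0$, which follows from \ref{strook} applied to $M$), and then deduces $\t{Ext}_{R}^{d}(k,M^{*})\neq 0$ via \ref{relations}.

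Your primary route differs in that you aim for $\t{T-codp}(M^{*})=0$ rather than $\t{Ext}_{R}^{d}(k,M^{*})\neq 0$ directly, reaching it through the derived-category identification $\t{Ext}_{R}^{i}(k,H_{\mf{m}}^{d}(M))\simeq\t{Ext}_{R}^{i+d}(k,M)$ and then closing with \ref{strook} applied to $M^{*}$. This is perfectly valid and is in fact close in spirit to the referee's alternative proof recorded in the remark immediately following the corollary, which uses Yassemi's width formula $\t{E-dp}\,M^{*}=\t{T-codp}\,M+d$. The trade-off: the paper's argument stays entirely within the spectral-sequence machinery already built in \ref{hom2}, whereas your primary route imports the standard (but not stated in the paper) fact $\tb{R}\Hom_{R}(k,-)\simeq\tb{R}\Hom_{R}(k,\tb{R}\Gamma_{\mf{m}}(-))$. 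Either way the content is the same degree shift, just packaged differently.
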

\begin{proof}
From the above proof, there is an isomorphism
$$\t{Tor}_{p}(M,\t{Tor}_{q}(k, H_{\mf{m}}^{d}(R))) \simeq \t{Tor}_{p+q}(k, H_{\mf{m}}^{d}(M))$$
and the left hand side is zero whenever $q\neq d$. If we show
$M\otimes \t{Tor}_{d}(k,H_{\mf{m}}^{d}(R))\neq 0$, then $\t{Tor}_{d}^{R}(k, H_{\mf{m}}^{d}(M))\neq 0$, and therefore $\t{Ext}_{R}^{d}(k, \Hom_{R}(M,\Omega))\neq 0$ by \ref{relations}. One can use local duality and Ext-Tor relations to show that $\t{Tor}_{d}(k,H_{\mf{m}}^{d}(R))\simeq k$, and therefore $\t{Tor}_{d}(k,H_{\mf{m}}^{d}(R))\simeq k$ as $k$ is a simple $R$-module. Since $\t{E-dp}(M)=d<\infty$, we have $\t{T-codp}(M)=0$. This shows $M\otimes \t{Tor}_{d}(k,H_{\mf{m}}^{d}(R))\neq 0$, proving the claim.
\end{proof}

\begin{rmk}
In \cite{hc} a construction is given which shows that the functor $\Hom_{R}(-,\Omega)$ restricts to a duality on a certain subcategory of $\hcm{R}$, in fact a considerably more general result is given. We will restrict to the case when $R$ is a $d$-dimensional Cohen-Macaulay ring with a canonical module $\Omega$. Following \cite[6.1]{hc}, define the $\mf{m}$-adic completion functor to be
$$\Lambda^{\mf{m}}(-):=\llim_{t}\left( R/\mf{m}^{t}\otimes_{R}-\right)$$
and its $i$-th left derived functors is $H_{i}^{\mf{m}}(-)$ and called the \ti{$i$-th local homology} functor. For any $R$-module $M$, there is a canonical map $\psi_{M}:M\ra H_{0}^{\mf{m}}(M)$. Celikbas and Holm define the class of \ti{relative Cohen-Macaulay modules of cohomological dimension $d$ with respect to $\mf{m}$}, denoted $\t{CM}_{\mf{m}}^{d}(R)$, to consist of all $R$-modules $M$ such that
\begin{enumerate}
\item $H_{\mf{m}}^{i}(M)=0$ for all $i\neq d$;
\item The canonical map $\psi_{M}$ is an isomorphism;
\item if $\phi:M\ra M^{\vee\vee}$ is the canonical embedding into the Matlis double dual, then $H_{\mf{m}}^{i}(\t{coker}\,\phi)=0$ for all $i\in\mbb{Z}$. 
\end{enumerate}
\ni
It is shown in \cite[Thm. 6.16]{hc} that the functor $\Hom_{R}(-,\Omega)$ is a duality on the class $\t{CM}_{\mf{m}}^{d}(R)$, which is clearly a subcategory of $\hcm{R}$ from the first condition. \qed
\end{rmk}

\begin{rmk}
There is a shorter alternative proof for both \ref{hom2} and \ref{edual} without using spectral sequences, and the author is grateful to the referee for suggesting it. 
\begin{proof}
By local duality, one can see that $M\in\hcm{R}$ if and only if $\t{Ext}_{R}^{i}(M,\Omega)=0$ for $i\geq 0$, meaning that $\tb{R}\Hom_{R}(M,\Omega)\simeq \Hom_{R}(M,\Omega)$ in $\tb{D}(R)$. There is an equality
$$\t{E-dp}\,\Hom_{R}(M,\Omega) = \t{T-codp}\,M + \t{E-dp}\,\Omega = \t{T-codp}\,M+d$$ 
by \cite[Thm. 2.4]{yass}. Consequently $\t{E-dp}\,\Hom_{R}(M,\Omega)\geq d$ so is in $\hcm{R}$. Moreover, if $\t{E-dp}\,M=d$, we know $\t{T-codp}\,M=0$ by \ref{strook}, hence $\t{E-dp}\,\Hom_{R}(M,\Omega)=d$.
\end{proof}
\end{rmk}
\ni
Given any definable subcategory $\mc{D}\subset \Mod{R}$, there is a \ti{dual definable subcategory}, which we denote $\mc{D}^{d}$. This is the definable subcategory defined by the property that for any module $M\in\mc{D}$ we have $M^{\vee}\in\mc{D}^{d}$. It is indeed the case that this is a duality, since $\mc{D}^{dd}\simeq \mc{D}$ (see \cite[Cor. 3.4.18]{psl}). We can use \ref{relations} to describe the dual definable subcategories of both $\hcm{R}$ and $\lcm{R}$. Over any $d$-dimensional Cohen-Macaulay ring, the dual definable subcategory of $\hcm{R}$ is 
$$\hcm{R}^{d}=\{M\in\Mod{R}:\t{Tor}_{i}^{R}(k,M) = 0 \t{ for all }i<d\},$$
while if $R$ has a canonical module the dual definable subcategory of $\lcm{R}$ is
$$\lcm{R}^{d} = \{M\in\Mod{R}:\t{Ext}_{R}^{1}(R/(\bm{x}),M)=0 \t{ for all $R$-sequences $\bm{x}$}\}.$$
In the case that $R$ is a Gorenstein ring, and we can associate $\lcm{R}$ with the category of Gorenstein flat $R$-modules, $\lcm{R}^{d}$ is precisely the class of \ti{Gorenstein injective} $R$-modules. This follows from \cite[Thm. 10.3.8(7)]{rha} and \cite[Cor. 10.3.9]{rha}. Information about Gorenstein injective modules can be found in \cite[\S 10.1]{rha}.
\\
\\
In the proof of the above theorem, we showed that if $M\in\hcm{R}$, then $\t{Tor}_{i}^{R}(k, M\otimes_{R}H_{\mf{m}}^{d}(R)) = 0$ for all $i<d$. This shows that $-\otimes_{R}H_{\mf{m}}^{d}(R)$ gives us a functor $\hcm{R}\ra \hcm{R}^{d}$. However, this functor is very far from a duality. The following result helps to see this.

\begin{prop}
Let $(R,\mf{m},k)$ be a $d$-dimensional Cohen-Macaulay ring and $M$ an $R$-module with $\t{E-dp}(M)=d$. Then $H_{\mf{m}}^{i}(H_{\mf{m}}^{d}(M))\simeq H_{\mf{m}}^{i+d}(M)$ for all $i\geq 0$.
\end{prop}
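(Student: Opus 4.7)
The plan is to exploit the Grothendieck spectral sequence arising from the composition of $\Gamma_{\mf{m}}$ with itself, since $\Gamma_{\mf{m}}\circ\Gamma_{\mf{m}}=\Gamma_{\mf{m}}$ and the right derived functors of $\Gamma_{\mf{m}}$ are precisely the local cohomology functors $H_{\mf{m}}^{*}$.

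First I would verify the availability of the spectral sequence
\begin{equation*}
E_{2}^{p,q}=H_{\mf{m}}^{p}(H_{\mf{m}}^{q}(M))\implies H_{\mf{m}}^{p+q}(M).
\end{equation*}
For Grothendieck's theorem on the composition of functors, it suffices to check that $\Gamma_{\mf{m}}$ sends injectives to $\Gamma_{\mf{m}}$-acyclics. Since $R$ is Noetherian, every injective $R$-module decomposes as $\bigoplus E(R/\mf{p})$, and $\Gamma_{\mf{m}}$ extracts exactly the summands with $\mf{p}=\mf{m}$, producing a direct sum of copies of $E(k)$; this is again injective, hence $\Gamma_{\mf{m}}$-acyclic, so the hypotheses of the composed-functor theorem are met, and the identification $\Gamma_{\mf{m}}\circ\Gamma_{\mf{m}}=\Gamma_{\mf{m}}$ gives the abutment $H_{\mf{m}}^{p+q}(M)$.

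With the spectral sequence in hand, I would invoke the depth hypothesis. By the Ext-depth/local cohomology equivalence recalled early in the paper, $\t{E-dp}(M)=d$ forces $H_{\mf{m}}^{q}(M)=0$ for all $q<d$, while Grothendieck's vanishing theorem gives $H_{\mf{m}}^{q}(M)=0$ for all $q>d$. Consequently the $E_{2}$-page is concentrated in the single row $q=d$: every differential $d_{r}$ on $E_{r}^{p,d}$ has either source or target equal to zero, so the spectral sequence collapses at $E_{2}$. The only nonvanishing filtration piece on the abutment in total degree $p+d$ is therefore $E_{\infty}^{p,d}=E_{2}^{p,d}$, yielding the desired isomorphism $H_{\mf{m}}^{p}(H_{\mf{m}}^{d}(M))\simeq H_{\mf{m}}^{p+d}(M)$.

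The main obstacle is really just the bookkeeping needed to legitimize the spectral sequence, namely the observation that $\Gamma_{\mf{m}}$ preserves injectivity over a Noetherian ring; once that is in place, the collapse of a spectral sequence concentrated in one row is automatic. Thus the proposition reduces to a direct application of the composed-functor spectral sequence combined with the vanishing granted by the depth hypothesis together with $\t{dim}\,R=d$.
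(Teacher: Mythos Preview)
Your proof is correct and follows essentially the same approach as the paper: both set up the Grothendieck spectral sequence for the composition $\Gamma_{\mf{m}}\circ\Gamma_{\mf{m}}=\Gamma_{\mf{m}}$, observe that $\Gamma_{\mf{m}}$ preserves injectivity, and then collapse the spectral sequence using the fact that $H_{\mf{m}}^{q}(M)=0$ for $q\neq d$ (from the depth hypothesis together with Grothendieck vanishing). Your write-up is simply a slightly more detailed version of the paper's argument.
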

\begin{proof}
Since $\Gamma_{\mf{m}}\circ\Gamma_{\mf{m}}$ and $\Gamma_{\mf{m}}(E)$ is injective for any injective $R$-module $E$, there is a Grothendieck spectral sequence
$$E_{2}^{p,q}=H_{\mf{m}}^{p}(H_{\mf{m}}^{q}(N))\implies H_{\mf{m}}^{n}(N)$$
for any $R$-module $N$. If $\t{E-dp}(M)=d$, then $H_{\mf{m}}^{i}(M)=0$ for all $i\neq d$, so this spectral sequence is nonzero only if $q=d$. Consequently this collapses on the first page giving isomorphisms $H_{\mf{m}}^{d+i}(M)\simeq H_{\mf{m}}^{i}(H_{\mf{m}}^{d}(M))$.
\end{proof}
\ni
Consequently, if $M\in\hcm{R}$, then $M\otimes_{R} H_{\mf{m}}^{d}(R)\otimes_{R} H_{\mf{m}}^{d}(R)\simeq H_{\mf{m}}^{d}(H_{\mf{m}}^{d}(M))\simeq H_{\mf{m}}^{2d}(M)=0$, showing that $-\otimes_{R}H_{\mf{m}}^{d}(R):\hcm{R}\ra \hcm{R}^{d}$ is not a duality.

\section[Ext-depth]{Ext-depth and $\rlim\cm{R}$}
\ni
Unless explicitly stated, for this section we assume that $(R,\mf{m},k)$ is a $d$-dimensional Cohen-Macaulay $R$-module admitting a canonical module. So far we have only really considered Ext-depth in relation to $\hcm{R}$, but it also plays an impact on $\lcm{R}$. Suppose $M\in\lcm{R}$ can be realised as $M=\rlim M_{i}$ with each $M_{i}\in \cm{R}$, so we have an isomorphism $\t{Ext}_{R}^{d}(k,M)\simeq \rlim \t{Ext}_{R}^{d}(k,M_{i})$, and this module can be zero, meaning that $\t{E-dp}(M)\in\{d,\infty\}$. One can obtain an example of an infinite Ext-depth module in $\lcm{R}$ quite easily - indeed, if $R$ is a Gorenstein ring, then $E(R)$, the injective hull of $R$, is a flat module, and it is clear that every flat module lies in $\lcm{R}$. We will let $\lcm{R}_{\infty}$ denote the subcategory of $\lcm{R}$ consisting of all infinite Ext-depth modules, and $\lcm{R}_{d}$ denote the subcategory consisting of all modules with Ext-depth $d$. We will first turn our attention to $\lcm{R}_{d}$, and to do so we recall the following definition due to Hochster.

\begin{defn}
An $R$-module $M$ is a \ti{balanced big Cohen-Macaulay module} if every system of parameters is a regular sequence. We will let $\t{bbCM}(R)$ denote the class of balanced big Cohen-Macaualy modules.
\end{defn}

\ni
By Holm's result \ref{holm} any balanced big Cohen-Macaualy module will be an element of $\lcm{R}$. Suppose $\bm{y}$ is a system of parameters for $R$. Since local cohomology is invariant under radical, there are isomorphisms of functors $H_{(\bm{y})}^{i}(-)\simeq H_{\mf{m}}^{i}(-)$ for each $i\geq 0$. In particular, if $M\in\lcm{R}$ has $\t{E-dp}(M)=d$, we see that $H_{(y)}^{d}(M)\neq 0$, so $\t{T-codp}(\bm{y},M)=0$ and thus $M/\bm{y}M\neq 0$. Since $M\in\lcm{R}$, we know that $\bm{y}$ is a weak $M$-sequence, and therefore we see that $\bm{y}$ is actually an $M$-sequence. Conversely, if $\t{E-dp}(M)=\infty$, it follows that $\t{T-codp}(\bm{y},M)=\infty$, so $R/(\bm{y})\otimes M=0$ so $\bm{y}$ is not a regular $M$-sequence. Therefore we have shown the following.

\begin{lem}
Let $R$ be a Cohen-Macaulay ring with a canonical module $\Omega$. Then the modules in $\lcm{R}$ of Ext-depth $d$ coincide with the category of balanced big Cohen-Macaulay modules, that is
$$\t{bbcm}{R}=\lcm{R}_{d}.$$
\end{lem}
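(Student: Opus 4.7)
The plan is to prove the two inclusions $\t{bbCM}(R)\subseteq\lcm{R}_{d}$ and $\lcm{R}_{d}\subseteq\t{bbCM}(R)$ separately, with the bridge in both directions being the radical invariance of local cohomology $H^{i}_{(\bm{y})}(-)\simeq H^{i}_{\mf{m}}(-)$ available for any system of parameters $\bm{y}$, together with the coupling between Ext-depth and Tor-codepth afforded by Proposition \ref{strook}.

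For $\t{bbCM}(R)\subseteq\lcm{R}_{d}$: let $M$ be balanced big Cohen-Macaulay. Every system of parameters is, in particular, a weak $M$-sequence, so Theorem \ref{holm}(2) places $M$ in $\lcm{R}$. To rule out $\t{E-dp}(M)=\infty$, I would exploit that $\bm{y}$ is a genuine $M$-sequence, so $M/\bm{y}M\neq 0$, i.e.\ $\t{T-codp}(\bm{y},M)=0$; Proposition \ref{strook} then forces $\t{E-dp}(\bm{y},M)$ to be finite and at most $d$, and via radical invariance this is $\t{E-dp}(M)$. Since $\lcm{R}\subseteq\hcm{R}$ supplies the matching lower bound $\t{E-dp}(M)\geq d$, we conclude $\t{E-dp}(M)=d$.

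For the converse, take $M\in\lcm{R}_{d}$ and any system of parameters $\bm{y}$. Theorem \ref{holm}(2) already gives that $\bm{y}$ is a weak $M$-sequence, so only $M/\bm{y}M\neq 0$ remains to be checked. But $\t{E-dp}(M)=d$ together with radical invariance yields $H^{d}_{(\bm{y})}(M)\simeq H^{d}_{\mf{m}}(M)\neq 0$, so $\t{E-dp}(\bm{y},M)=d$ is finite; Proposition \ref{strook} then forces $\t{T-codp}(\bm{y},M)=0$, which is precisely $M/\bm{y}M\neq 0$. Thus $\bm{y}$ is a genuine $M$-sequence and $M\in\t{bbCM}(R)$.

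I do not anticipate any serious obstacle, as the argument is essentially a chain of formal equivalences already foreshadowed in the discussion immediately preceding the lemma; the only point requiring mild care is that Proposition \ref{strook} is stated for complete rings, but the identification $\t{T-codp}(\bm{y},M)=0\iff M/\bm{y}M\neq 0$ is definitional, and the inequality $\t{E-dp}(\bm{y},M)+\t{T-codp}(\bm{y},M)\leq d$ holds in the generality needed here, so no substantive work beyond unpacking the two characterisations is required.
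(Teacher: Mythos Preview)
Your proposal is correct and follows essentially the same approach as the paper: both arguments hinge on radical invariance $H^{i}_{(\bm{y})}(-)\simeq H^{i}_{\mf{m}}(-)$ together with the Ext-depth/Tor-codepth coupling of Proposition~\ref{strook}, combined with Holm's characterisation \ref{holm}(2) supplying the weak-sequence condition. The only cosmetic difference is that the paper handles the inclusion $\t{bbCM}(R)\subseteq\lcm{R}_{d}$ by contrapositive (showing $\t{E-dp}(M)=\infty$ forces $M/\bm{y}M=0$, hence $M\notin\t{bbCM}(R)$), whereas you argue directly from $M/\bm{y}M\neq 0$; these are the same use of Proposition~\ref{strook} read in opposite directions, and your remark about the completeness hypothesis there applies equally to the paper's own argument.
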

\ni
This is not a definable subcategory of $\Mod{R}$, since it is not closed under direct limits, moreover, it is not closed under direct summands, nor does it contain the zero module. However, it is closed under direct sums and direct products. Since it contains the canonical module $\Omega$, we can consider how the dual acts on it.
\begin{prop}
Let $(R,\mf{m},k)$ be a $d$-dimensional complete Cohen-Macaulay ring with canonical module $\Omega$. If $M$ is a balanced big Cohen-Macaulay $R$-module, the following hold:
\begin{enumerate}
\item $M^{*}=\Hom(M,\Omega)$ is also a balanced big Cohen-Macaulay module.
\item $\Omega$ is an injective cogenerator in $\t{bbCM}(R)$.
\item If $M$ does not have a direct summand of infinite depth, then the canonical morphism $M\ra M^{**}$ is injective.
\end{enumerate}
\end{prop}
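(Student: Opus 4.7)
Parts (1) and (2) should follow at once from the results already in hand. For (1), Proposition \ref{hom} yields $M^* = \Hom_R(M,\Omega)\in\lcm{R}$ whenever $M\in\lcm{R}$, and Corollary \ref{edual} gives $\t{E-dp}(M^*) = d$ whenever $\t{E-dp}(M) = d$; together these place $M^*$ in $\lcm{R}_d = \t{bbCM}(R)$. For (2), the injectivity of $\Omega$ in $\t{bbCM}(R)$ is inherited from its injectivity in $\lcm{R}$, again by Proposition \ref{hom}. For the cogenerator property, I would invoke local duality \ref{gd} to write $\Hom_R(M,\Omega)\simeq H_{\mf{m}}^{d}(M)^\vee$, which is nonzero for every nonzero $M\in\t{bbCM}(R)$ because $\t{E-dp}(M)=d$ forces $H_{\mf{m}}^{d}(M)\neq 0$ and $E(k)$ is an injective cogenerator in $\Mod{R}$.

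For part (3), set $K := \ker(\eta_M)$, where $\eta_M:M\to M^{**}$ is the biduality morphism. The plan is to identify $K$ as an infinite Ext-depth direct summand of $M$, so that the hypothesis forces $K = 0$. By construction every $\phi\in M^*$ vanishes on $K$, so the restriction map $M^*\to K^*$ is zero. Once one shows $K\in\lcm{R}$, the injectivity of $\Omega$ in $\lcm{R}$ (Proposition \ref{hom}) allows any $\psi\in K^*$ to be extended to some $\tilde\psi:M\to\Omega$; but then $\tilde\psi|_K = \psi$ must vanish, forcing $K^* = 0$, whence $H_{\mf{m}}^d(K) = 0$ by local duality. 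Combining this with the vanishing $H_{\mf{m}}^i(K) = 0$ for $i<d$ (from $K\in\lcm{R}\subseteq\hcm{R}$) and Grothendieck vanishing above $d$, we obtain $K\in\lcm{R}_\infty$.

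The hard step is to upgrade this to a splitting: one must exhibit $K$ as a direct summand of $M$. My expectation is that the inclusion $K\hookrightarrow M$ is pure, so $K$ inherits membership in the definable subcategory $\lcm{R}$, and the argument of the previous paragraph applies. The splitting itself I would try to obtain via the pure-injective decomposition of $M$ inside the definable category $\lcm{R}$: indecomposable pure-injectives in $\lcm{R}$ are either of Ext-depth $d$ or of infinite Ext-depth, so a nonzero infinite depth pure submodule of $M$ ought to produce a nonzero infinite depth direct summand, contradicting the hypothesis and forcing $K = 0$. The principal obstacle is establishing the purity of $K\hookrightarrow M$; should this fail, a more delicate argument using the standard identity $\eta_{M^*}\circ(\eta_M)^* = \t{id}_{M^*}$, which shows $(\eta_M)^*:M^{***}\tra M^*$ is a split epimorphism, would be needed to transfer the splitting question from $M$ to $M^{***}$.
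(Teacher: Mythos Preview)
Your arguments for (1) and (2) coincide with the paper's: part (1) is exactly Proposition~\ref{hom} plus Corollary~\ref{edual}, and part (2) is Grothendieck local duality~\ref{gd} (the paper states only this; your unpacking via $H_{\mf{m}}^{d}(M)^{\vee}\neq 0$ is the intended reading).

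For (3) there is a genuine gap, and you name it yourself. Nothing in your sketch establishes that $K=\ker(\eta_M)$ is a direct summand of $M$, nor even that $K\hookrightarrow M$ is pure. Without purity you cannot conclude $K\in\lcm{R}$, so the clause ``once one shows $K\in\lcm{R}$'' never fires, and without a splitting you never get to invoke the hypothesis. The proposed repair via pure-injective structure does not work: $M$ is an arbitrary balanced big Cohen-Macaulay module, not assumed pure-injective, and there is no decomposition of $M$ into indecomposable pure-injectives to appeal to. The alternative of passing to $M^{***}$ via the retraction $(\eta_{M})^{*}\circ\eta_{M^{*}}=\t{id}_{M^{*}}$ gives information about splittings at the level of $M^{*}$, not $M$, so it does not transport back.

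The paper takes a completely different route for (3): it imitates the classical argument of \cite[3.3.10]{bh}. Because $M$ is a \emph{balanced} big Cohen-Macaulay module, any system of parameters $\bm{x}$ is a genuine $M$-sequence, and (under the no-infinite-depth-summand hypothesis) one may reduce modulo $\bm{x}$ to the zero-dimensional situation. There $\Omega\simeq E(k)$, so $\eta_M$ becomes the Matlis biduality map $M\to M^{\vee\vee}$, which is injective for every module. This sidesteps entirely the question of whether $K$ splits off.
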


\begin{proof}
\leavevmode
\begin{enumerate}
\item We know that $M^{*}\in\lcm{R}$ so it suffices to show that $\t{E-dp}(M)=d$, but this is what \ref{edual} shows.
\item This follows from Grothendieck local duality.
\item The proof of this is essentially the same as the proof that $M\simeq M^{**}$ for $M\in\cm{R}$. Indeed, if $\bm{x}$ is an $R$-sequence, we may extend it to a system of parameters which is then an $M$-sequence as $M$ is a balanced big Cohen-Macaulay module. If $M$ does not have a direct summand of infinite Ext-depth, we can reduce to the case that $\t{dim }R=0$, as is done in [B.H 3.3.10]. In this situation, $\Omega\simeq E(k)$, and then $M\ra M^{\vee\vee}$ is injective.
\end{enumerate}
\end{proof}
\ni
Having considered $\lcm{R}_{d}$ we will now turn our attention to $\lcm{R}_{\infty}$. Notice that we can determine the Ext-depth of a module in $\lcm{R}$ by considering if the functor $k\otimes -$ vanishes on it: clearly $\t{E-dp}(M)=d$ if and only if $k\otimes M\neq 0$.
\begin{lem}
If $R$ is a Cohen-Macaulay ring admitting a canonical module, then $\lcm{R}_{\infty}$ is a definable subcategory of $\Mod{R}$.
\end{lem}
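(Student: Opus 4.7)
The plan is to realise $\lcm{R}_{\infty}$ as the intersection of two definable subcategories of $\Mod R$. First, I would combine the observation immediately preceding the statement with Proposition \ref{strook} to show that, for every $M\in\lcm{R}$,
\[
\t{E-dp}\,M = \infty \iff \t{T-codp}\,M = \infty \iff k\otimes_R M = 0.
\]
Indeed, for $M\in\lcm{R}$ the only possible values of $\t{E-dp}\,M$ are $d$ and $\infty$, so \ref{strook} immediately gives the first equivalence; the forward direction of the second is tautological, while the reverse uses that $\t{E-dp}\,M = d$ combined with the inequality $\t{E-dp}\,M + \t{T-codp}\,M \leq d$ forces $\t{T-codp}\,M = 0$. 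Consequently
\[
\lcm{R}_{\infty} = \lcm{R} \cap \mc{K}, \qquad \mc{K} := \{M\in\Mod R : k\otimes_R M = 0\}.
\]

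Next, I would show that $\mc{K}$ is itself a definable subcategory. Since $k$ is finitely presented, the functor $k\otimes_R -\,:\,\Mod R \ra \t{Ab}$ is a finitely presented additive functor, so its kernel $\mc{K}$ is definable by the same citation \cite[Cor. 10.2.32]{psl} already used to establish definability of $\hcm{R}$ at the start of the section. If one prefers to argue by hand, the three closure conditions are each immediate: tensor products always commute with direct limits; by Lenzing's theorem the finitely presented module $k$ has the property that $k\otimes_R -$ commutes with arbitrary direct products; and tensoring any pure-exact sequence with $k$ keeps it exact, so $\mc{K}$ is closed under pure submodules.

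Finally, since the intersection of two definable subcategories of $\Mod R$ is again definable, the displayed identity exhibits $\lcm{R}_{\infty}$ as a definable subcategory. I do not anticipate a real obstacle here: the only genuine content is the tensor-vanishing characterisation of infinite Ext-depth inside $\lcm{R}$, and that falls out of \ref{strook} together with the fact, already noted, that modules of $\lcm{R}$ have Ext-depth either $d$ or $\infty$.
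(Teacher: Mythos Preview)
Your proposal is correct and is essentially the same argument as the paper's, just unpacked in more detail: the paper's one-line proof simply adjoins the finitely presented functor $k\otimes_R-$ to the set of functors already cutting out $\lcm{R}$, which is precisely your description of $\lcm{R}_{\infty}$ as $\lcm{R}\cap\mc{K}$ together with the observation that $\mc{K}$ is definable. The characterisation $\t{E-dp}\,M=\infty\iff k\otimes_R M=0$ for $M\in\lcm{R}$ that you justify via \ref{strook} is exactly the remark made immediately before the lemma, which the paper takes as already established.
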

\begin{proof}
Let $X$ be the set of functors defining $\lcm{R}$, and $X'=X\cup\{k\otimes -\}$. This set of finitely presented functors determines $\lcm{R}_{\infty}$. 
\end{proof}
\ni
It is clear that there are no finitely generated modules in $\lcm{R}_{\infty}$ but it is fully contained in $\lcm{R}$ so is still completely determined by $\cm{R}$. For example, any flat module of infinite Ext-depth lies in $\lcm{R}_{\infty}$, and there are no shortage of such modules: if $R$ is a Gorenstein ring, for instance, and $F$ is an arbitrary flat module, then the injective hull of $F$ is also flat and is of infinite depth. A proof of this fact can be found at \cite[Theorem 9.3.3(3)]{rha}. It is clear that there is an inclusion of $\lcm{R}_{\infty}$ in $\lcm{R}$, and the nature of this inclusion is quite familiar.

\begin{lem}\label{serre}
Let $0\ra L\ra M\ra N\ra 0$ be a short exact sequence of $R$-modules with $L,M,N\in\lcm{R}$. Then $M\in\lcm{R}_{\infty}$ if and only if $L,N\in\lcm{R}_{\infty}$.
\end{lem}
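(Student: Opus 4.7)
The plan is to apply the Depth Lemma (\ref{dl}) to the short exact sequence $0 \to L \to M \to N \to 0$, taking $\mf{a} = \mf{m}$, and combine this with the dichotomy observed just before the lemma: any module in $\lcm{R}$ has Ext-depth either equal to $d$ or equal to $\infty$. Throughout, $\t{E-dp}$ denotes Ext-depth computed with respect to $\mf{m}$.

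For the ``if'' direction, assume $L, N \in \lcm{R}_{\infty}$. The first inequality of \ref{dl} gives
$$\t{E-dp}\,M \geq \min\{\t{E-dp}\,L, \t{E-dp}\,N\} = \infty,$$
so $M \in \lcm{R}_{\infty}$.

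For the ``only if'' direction, assume $M \in \lcm{R}_{\infty}$, i.e.\ $\t{E-dp}\,M = \infty$. Since $N \in \lcm{R}$, either $\t{E-dp}\,N = d$ or $\t{E-dp}\,N = \infty$; in both cases $\t{E-dp}\,N + 1 \geq d+1$. The second inequality of \ref{dl} then yields
$$\t{E-dp}\,L \geq \min\{\t{E-dp}\,M,\,\t{E-dp}\,N + 1\} \geq d+1.$$
Since $L \in \lcm{R}$, its Ext-depth is $d$ or $\infty$, and the inequality forces $\t{E-dp}\,L = \infty$, i.e.\ $L \in \lcm{R}_{\infty}$. Feeding this back into the third inequality of \ref{dl},
$$\t{E-dp}\,N \geq \min\{\t{E-dp}\,L - 1,\,\t{E-dp}\,M\} = \infty,$$
so $N \in \lcm{R}_{\infty}$ as well.

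There is no real obstacle here — the argument is essentially a bookkeeping exercise with the three inequalities of the Depth Lemma. The only thing one must be careful about is using the right inequality in the right direction and invoking the two-valued nature of Ext-depth on $\lcm{R}$ at the appropriate moment to upgrade a lower bound of $d+1$ to $\infty$.
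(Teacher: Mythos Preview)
Your proof is correct and follows essentially the same route as the paper's. The only cosmetic difference is that the paper writes out the relevant piece of the long exact $\t{Ext}_R^{\bullet}(k,-)$ sequence explicitly, whereas you invoke the Depth Lemma \ref{dl}, which is nothing more than a packaged form of that same long exact sequence; both arguments then use the $\{d,\infty\}$ dichotomy for modules in $\lcm{R}$ in exactly the same way.
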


\begin{proof}
It is clear that $\lcm{R}_{\infty}$ is extension closed. Suppose $0\ra L\ra M\ra N\ra 0$ is a short exact sequence of modules in $\lcm{R}$ such that $M\in\lcm{R}_{\infty}$. By the assumptions on $L,M$ and $N$ there is then an exact sequence
$$0\ra \t{Ext}_{R}^{d}(k,L)\ra \t{Ext}_{R}^{d}(k,M)\ra\t{Ext}_{R}^{d}(k,N)\ra \t{Ext}_{R}^{d+1}(k,L)\ra \t{Ext}_{R}^{d+1}(k,M).$$
Since we assumed that $\t{E-dp}(M)=\infty$, we have $\t{Ext}_{R}^{d}(k,M)=0=\t{Ext}_{R}^{d+1}(k,M)$, showing that $\t{Ext}_{R}^{d}(k,L)=0$ and thus $\t{E-dp}(L)=\infty$. It immediately follows that $\t{E-dp}(N)=\infty$, which shows the result. 
\end{proof}
\ni

\begin{exmp}
Let us consider the $A_{\infty}$ curve singularity $R=k[[x,y]]/(x^{2})$. This is a complete one-dimensional Gorenstein ring, so $\lcm{R}=\hcm{R}=\{M\in\Mod{R}:\Hom(k,M)=0\}$. Up to isomorphism, there are countably many Cohen-Macaulay $R$-modules, which were classified by Buchweitz, Greuel and Schreyer in \cite{bgs}. They are:
\begin{enumerate}
\item the ring, $R$;
\item the ideals $I_{j}:=(x,y^{j}),$ where $j\geq 1$;
\item the ideal $I_{\infty}:=xR$.
\end{enumerate}
Since $R$ is a complete local ring, each of these is an indecomposable pure-injective $R$-module. The remaining indecomposable pure injective $R$-modules in $\lcm{R}$ were classified by Puninski in \cite{pun}. They are
\begin{enumerate}
\item $Q=Q(R)$, the total quotient ring of $R$;
\item $\ol{R}$, the integral closure of $R$ in $Q$;
\item the Laurent series $L:=k((y))$, viewed as an $R$-module through the morphism $R\ra R/(x)$.
\end{enumerate}
\ni
Let us now determine the Ext-depth of each of these indecomposable pure-injectives.
\begin{itemize}
\item Let us start with $Q$. Since $R$ is a Gorenstein ring, $Q$ is an injective $R$-module by \cite[9.3.3]{rha} and therefore $\t{Ext}_{R}^{i}(k,Q)=0$. Consequently $\t{E-dp}(Q)=\infty$.
\item Let us now consider $L:=k((y))$. The quotient $R\ra k[[y]]$ sends the maximal ideal $\mf{m}=(x,y)$ of $R$ to the maximal ideal $(y)$ of $k[[y]]$. The independence theorem of local cohomology shows that for each $i$ is an isomorphism $H_{\mf{m}}^{i}(L)\simeq H_{(y)}^{i}(L)\vert_{R}$, where on the right hand side we view $L$ as a $k[[y]]$-module. We know $H_{\mf{m}}^{0}(L)=0$, and therefore $H_{(y)}^{0}(L)=0$ as a $k[[y]]$-module, since the ring homomorphism is just factoring by $x$. But we also know that $k((y))$ is an injective $k[[y]]$ module for the same reason as in the case of $Q$, and therefore $H_{(y)}^{i}(k((y)))=0$ for all $i$. It follows from the independence theorem that $\t{E-dp}(L)=\infty$.
\item Lastly we consider $\ol{R}$. We show that $k\otimes_{R} \ol{R}\neq 0$, hence $\t{E-dp}(\ol{R})=1$. In \cite[Remark 2.1]{pun}, it is shown that $\ol{R}\supset y\ol{R}$ and $y\ol{R}$ is maximal. Thus $\ol{R}/y\ol{R}\simeq k$, as $k$ is the unique simple module. Since the sequence $\ol{R}\ra \ol{R}/y\ol{R}\ra 0$ is exact, so is $\ol{R}\otimes_{R}k\ra \ol{R}/y\ol{R}\otimes_{R}k\ra 0$, but $\ol{R}/y\ol{R}\otimes_{R}k\neq 0$, which means that $\ol{R}\otimes k \neq 0$, which is what we wanted to show.
\end{itemize}
\end{exmp}
\ni
If one defines $\hcm{R}_{d}$ and $\hcm{R}_{\infty}$ in the obvious way, several of the results in this section and their proofs can be easily adapted to $\hcm{R}$. We collate these here. 
\begin{prop}
Let $R$ be a Cohen-Macaulay ring of dimension $d$. 
\begin{enumerate}
\item 
If $0\ra L\ra M\ra N\ra 0$ is a short exact sequence of $R$-modules in $\hcm{R}$, then $M\in\hcm{R}_{\infty}$ if and only if $L,N\in\hcm{R}_{\infty}$.

\item $\hcm{R}_{\infty}$ is a definable subcategory of $\Mod{R}$ containing no finitely generated $R$-modules.

\item If $R$ admits a canonical module, then $\Hom_{R}(-,\Omega)$ is an endofunctor on $\hcm{R}_{d}$ and $\Omega$ is injective in this category.
\end{enumerate}
\end{prop}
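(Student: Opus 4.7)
The plan is to follow the template of the corresponding results for $\lcm{R}$ already established in Section 3. The key observation that underlies all three parts is the following Ext-depth dichotomy: for any $M \in \hcm{R}$ one has $\t{E-dp}(M) \in \{d,\infty\}$. Indeed, Grothendieck vanishing forces $H_{\mf{m}}^{i}(M) = 0$ for $i > d$, and $M \in \hcm{R}$ forces $H_{\mf{m}}^{i}(M) = 0$ for $i < d$, so only $H_{\mf{m}}^{d}(M)$ can detect finite Ext-depth. In particular, to certify that a module in $\hcm{R}$ belongs to $\hcm{R}_{\infty}$ it suffices to check that $\t{Ext}_{R}^{d}(k,-)$ vanishes on it.

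For part (1), I would mimic the proof of Lemma \ref{serre} essentially verbatim. Applying $\t{Hom}_{R}(k,-)$ to $0 \to L \to M \to N \to 0$ and using that $L,M,N \in \hcm{R}$ (so all lower Ext groups against $k$ vanish) produces the exact sequence
$$0 \to \t{Ext}_{R}^{d}(k,L) \to \t{Ext}_{R}^{d}(k,M) \to \t{Ext}_{R}^{d}(k,N) \to \t{Ext}_{R}^{d+1}(k,L) \to \t{Ext}_{R}^{d+1}(k,M).$$
The implication $L,N\in\hcm{R}_{\infty} \Rightarrow M\in\hcm{R}_{\infty}$ is extension closure, immediate from the long exact sequence. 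Conversely, if $M \in \hcm{R}_{\infty}$ the two $M$-terms above vanish, forcing $\t{Ext}_{R}^{d}(k,L) = 0$; by the dichotomy this gives $L \in \hcm{R}_{\infty}$. Running the full long exact sequence once more with all Ext terms of $L$ and $M$ against $k$ vanishing then forces $\t{Ext}_{R}^{i}(k,N) = 0$ for every $i$, so $N \in \hcm{R}_{\infty}$.

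For part (2), the argument of Lemma 3.5 adapts directly: $\hcm{R}_{\infty}$ is cut out of $\Mod{R}$ by the vanishing of the functors $\{\t{Ext}_{R}^{i}(k,-) : 0 \leq i \leq d\}$, each of which is finitely presented by \cite[Thm. 10.2.35(c)]{psl}, so $\hcm{R}_{\infty}$ is a definable subcategory. Any nonzero finitely generated member of $\hcm{R}$ lies in $\cm{R}$ and thus has (ordinary) depth $d$, which equals its Ext-depth by Rees's theorem, so cannot lie in $\hcm{R}_{\infty}$. For part (3), the canonical module satisfies $\t{Ext}_{R}^{i}(k,\Omega) \neq 0$ precisely at $i = d$, so $\Omega \in \hcm{R}_{d}$. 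Given $M \in \hcm{R}_{d}$, Proposition \ref{hom2} gives $M^{*} \in \hcm{R}$ and Corollary \ref{edual} gives $\t{E-dp}(M^{*}) = d$, hence $M^{*} \in \hcm{R}_{d}$. Injectivity of $\Omega$ in the full subcategory $\hcm{R}_{d}$ is inherited from its injectivity in $\hcm{R}$ established in Proposition \ref{hom2}.

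The only genuine subtlety is the Ext-depth dichotomy in $\hcm{R}$, but this is essentially immediate from Grothendieck vanishing together with the local cohomology description of Ext-depth; everything else is a routine translation of already proved facts from $\lcm{R}$ to $\hcm{R}$.
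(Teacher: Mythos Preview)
Your proposal is correct and follows exactly the approach the paper intends: the paper does not give a separate proof of this proposition but simply remarks that ``several of the results in this section and their proofs can be easily adapted to $\hcm{R}$'', and you have carried out precisely those adaptations of Lemmas~\ref{serre} and~4.4, Proposition~\ref{hom2}, and Corollary~\ref{edual}. One small caveat worth noting is that the results you invoke for part~(3), namely \ref{hom2} and \ref{edual}, are stated in the paper under the hypothesis that $R$ is complete, so strictly speaking the same hypothesis is implicit here (as it is in the parallel Proposition~4.3 for $\lcm{R}_{d}$).
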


\section[]{Some categorical properties of $\lcm{R}$ and $\hcm{R}$}
\ni
In this section, we will look at some of the categorical properties of $\lcm{R}$ and $\hcm{R}$. Unless stated otherwise, $(R,\mf{m},k)$ will be a Cohen-Macaulay ring. Since many of the properties we consider will not require a canonical module, we will explicitly state when we are assuming $R$ admits one.

\begin{defn}
A subclass of $R$-modules $\mc{G}\subset\Mod{R}$ is \ti{preenveloping} if for every $M\in\Mod{R}$ there is a morphism $\phi:M\ra G$ with $\mc{G}$ in $G$ such that for every morphism $\psi:M\ra G'$ with $G'\in \mc{G}$ there is a $\alpha:G\ra G'$ such that $\psi=\alpha\circ\phi$. 
$$
\begin{tikzcd}
M \arrow[r,"\psi"] \arrow[dr,swap,"\phi"] & G' \\
& G \arrow[u,swap,"\alpha"]
\end{tikzcd}
$$
We say that $\mc{G}$ is \ti{enveloping}, if whenever $\psi=\phi$ in the above diagram, then $\alpha\in\t{Aut}(G)$.
\end{defn}
\ni
The dual notions are \ti{precovering} and \ti{covering} respectively. \\
\\
For a class of $R$-modules $\mc{A}$, we define
$$\mc{A}^{\perp}=\{M\in\Mod{R}:\t{Ext}_{R}^{1}(A,M)=0 \t{ for all }A\in\mc{A}\};$$
$$^{\perp}\mc{A}=\{M\in\Mod{R}:\t{Ext}_{R}^{1}(M,A)=0 \t{ for all }A\in\mc{A}\}.$$
We say that a $\mc{G}$-(pre)envelope $\phi:M\ra G$ is \ti{special} if there is a short exact sequence
$$
\begin{tikzcd}
0 \arrow[r] & M \arrow[r, "\phi"] & G \arrow[r] & X \arrow[r] & 0
\end{tikzcd}
$$
such that $X\in\,^{\perp}\mc{G}$, while a $\mc{F}$-precover $\gamma:F\ra M$ is \ti{special} if there is a short exact sequence
$$
\begin{tikzcd}
0  \arrow[r] & Y \arrow[r] & F \arrow[r,"\gamma"] &M \arrow[r] & 0
\end{tikzcd}
$$
with $Y\in \mc{F}^{\perp}.$
The following result, known as Wakamatsu's lemma, enables us to relate envelopes with special (pre)envelopes.

\begin{lem}{\cite[2.1.13]{trlifaj}}
Let $M\in \Mod{R}$ and $\mc{C}$ an extension closed class of $R$-modules.
\begin{enumerate}
\item Let $f:M\ra C$ be an injective $\mc{C}$-envelope of $M$. Then $f$ is special.
\item Let $g:C\ra M$ be a surjective $\mc{C}$-cover of $M$. Then $g$ is special.
\end{enumerate}
\end{lem}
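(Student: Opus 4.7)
The plan is to prove (1) and note that (2) follows by the dual construction. Writing $X = \t{coker}\,f$, so that $0 \ra M \xra{f} C \xra{\pi} X \ra 0$ is exact, one wants $\t{Ext}^{1}_{R}(X, C') = 0$ for all $C' \in \mc{C}$. Applying $\Hom_{R}(-, C')$ gives the long exact sequence
$$\Hom_{R}(C, C') \xra{f^{*}} \Hom_{R}(M, C') \xra{\delta} \t{Ext}^{1}_{R}(X, C') \xra{\pi^{*}} \t{Ext}^{1}_{R}(C, C').$$
The envelope property immediately shows $f^{*}$ is surjective, since any morphism $M \ra C'$ with $C' \in \mc{C}$ lifts through $f$; hence $\delta = 0$ and $\pi^{*}$ is injective. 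So it suffices to prove that $\pi^{*}\xi = 0$ for any $\xi \in \t{Ext}^{1}_{R}(X, C')$.

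Represent $\xi$ by $0 \ra C' \ra E \xra{q} X \ra 0$ and form the pullback $P = C \times_{X} E$. The pullback square produces a short exact sequence $0 \ra C' \ra P \xra{\rho} C \ra 0$ which represents $\pi^{*}\xi$, and extension closure of $\mc{C}$ places $P$ in $\mc{C}$. Using the universal property of the pullback on the pair $(f: M \ra C,\ 0: M \ra E)$ (these agree after projection to $X$), I obtain a lift $g: M \ra P$ with $\rho g = f$. The envelope property applied to $g$ yields $\alpha: C \ra P$ with $\alpha f = g$, and the composite $\beta := \rho \alpha: C \ra C$ satisfies $\beta f = f$.

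The crucial step---where the hypothesis that $f$ is an envelope (not merely a preenvelope) is used---is that $\beta f = f$ forces $\beta$ to be an automorphism of $C$. Once this is known, $\alpha \beta^{-1}$ is a section of $\rho$, so $0 \ra C' \ra P \ra C \ra 0$ splits, giving $\pi^{*}\xi = 0$ and hence $\xi = 0$. For (2), I would run the dual argument: given a representative $0 \ra Y \ra F \ra C' \ra 0$ of an element of $\t{Ext}^{1}_{R}(C', Y)$, form the pushout along the inclusions $Y \hra C$ and $Y \hra F$, which lies in $\mc{C}$ by extension closure; the pair $(g: C \ra M,\ 0: F \ra M)$ agrees on $Y$ and so descends to a map from the pushout to $M$, and the cover property produces an endomorphism $\gamma$ of $C$ with $g\gamma = g$, hence an automorphism, giving the required splitting of the pushout sequence.

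I anticipate no serious obstacle beyond keeping track of which diagram chase produces the required factorisation; the only substantive input besides standard Ext long-exact-sequence manipulations is the passage from the factorisation property of preenvelopes/precovers to the invertibility property of envelopes/covers, which is built into the definitions given just before the statement.
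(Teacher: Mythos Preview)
Your argument is correct and is essentially the standard proof of Wakamatsu's lemma; the paper itself does not supply a proof but only cites \cite[2.1.13]{trlifaj}, and what you have written is precisely the argument one finds there. One minor remark: in part (1) the long exact sequence step establishing injectivity of $\pi^{*}$ is what converts the splitting of the pullback sequence into the vanishing of the original class $\xi$, and you use it correctly; your sketch for (2) is likewise accurate once one notes that the map $C\ra Q$ into the pushout composed with the lift $Q\ra C$ gives the endomorphism $\gamma$ whose invertibility yields the required retraction.
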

\begin{defn}
A pair $\mf{C}=(\mc{F},\mc{G})$ of classes of $R$-modules is called a \ti{cotorsion theory} if $\mc{F}^{\perp}=\mc{G}$ and $^{\perp}\mc{G}=\mc{F}$.
\end{defn}
\ni
If $\mf{C}=(\mc{F},\mc{G})$ is a cotorsion pair, we say it is
\begin{enumerate}
\item \ti{hereditary} if $\mc{F}$ is closed under kernels of epimorphisms.
\item \ti{perfect} if $\mc{F}$ is covering and $\mc{G}$ is enveloping.
\item \ti{closed} if $\mc{F}$ is closed under direct limits.
\end{enumerate}

\ni
\\
We can relate some of these notions to the classes of modules we have previously seen: the following result is due to Holm.
\begin{thm}{\cite[Theorem D]{holm}}\label{lcmpair}
Let $R$ be a Cohen-Macaulay ring admitting a canonical module. Then the pair $(\lcm{R},\lcm{R}^{\perp})$ is a perfect hereditary cotorsion pair on $\Mod{R}$. Moreover, $\lcm{R}$ is preenveloping in $\Mod{R}$.
\end{thm}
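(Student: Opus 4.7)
The plan is to leverage Theorem \ref{holm}, which identifies $\lcm{R}$ with those $R$-modules whose image under $Z$ is Gorenstein flat over the trivial extension $R \ltimes \Omega$. Since $R \ltimes \Omega$ is Gorenstein, this places the problem inside the well-developed framework of the Gorenstein flat cotorsion pair, and the exact functors $U, Z$ --- faithful, satisfying $UZ = \t{id}_{\Mod{R}}$ and commuting with both direct limits and direct products --- make transfer between $\Mod{(R \ltimes \Omega)}$ and $\Mod{R}$ clean.

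The first step is to invoke the known fact that over a Gorenstein ring the pair $(\t{GFlat}(R \ltimes \Omega), \t{GFlat}(R \ltimes \Omega)^{\perp})$ is a complete hereditary cotorsion pair, whose completeness rests on deconstructibility of $\t{GFlat}$: by Lemma \ref{props}(1) every Gorenstein flat module is a direct limit of finitely presented Gorenstein projectives, and a set-theoretic filtration argument with a bounded cardinality estimate yields the required approximation sequences. Combining this with Theorem \ref{holm}(3) allows one to transfer the cotorsion pair to $\Mod{R}$: given $M \in \Mod R$, approximation sequences for $Z(M)$ inside $\Mod{(R \ltimes \Omega)}$ restrict along $U$ to $\lcm{R}$-approximation sequences for $M$, with exactness preserved by exactness of $U$. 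Hereditariness then falls out directly from characterisation (5) of Theorem \ref{holm}: if $0 \ra A \ra B \ra C \ra 0$ is short exact with $B, C \in \lcm{R}$, the long exact Tor sequence against $R/(\bm{x})$ forces $\t{Tor}_{i}^{R}(R/(\bm{x}), A) = 0$ for all $i \geq 1$, returning $A$ to $\lcm{R}$.

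For perfectness, $\lcm{R}$ is closed under direct limits (being the direct-limit closure of $\cm{R}$), and a standard result in the tradition of Bican--El Bashir--Enochs upgrades any complete cotorsion pair with direct-limit-closed left class to a perfect one: the closure under direct limits promotes precovering to covering, while the combination of completeness and direct-limit closure of the left class forces the right class to be enveloping. The separate preenveloping clause for $\lcm{R}$ itself follows by combining Holm's Theorem C, which supplies $\cm{R}$-preenvelopes in $\mod{R}$, with the general principle already invoked in computing $\langle \cm{R}\rangle$ (via \cite[Cor.\ 3.4.37]{psl}): the definable closure in $\Mod{R}$ of a preenveloping subcategory of $\mod{R}$ that is closed under finite direct sums is itself preenveloping in $\Mod{R}$.

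The principal technical obstacle throughout is establishing completeness of the Gorenstein flat cotorsion pair over $R \ltimes \Omega$, which requires either the deconstructibility argument above or the more recent set-theoretic machinery; once that is granted, everything else --- the Tor-vanishing giving hereditariness, the direct-limit closure driving perfectness, and the lift of preenveloping from $\cm{R}$ --- amounts to bookkeeping through the trivial extension.
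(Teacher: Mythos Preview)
The paper does not itself prove this theorem; it is quoted directly from Holm, and the only argument the paper adds is the observation (matching your final paragraph) that the preenveloping claim is immediate because $\lcm{R}$ is definable. The paper does, however, subsequently give a Kaplansky-class argument recovering the \emph{perfect cotorsion pair} part over an arbitrary Cohen--Macaulay ring, without requiring a canonical module, though without hereditariness.

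Your route through the trivial extension is essentially Holm's own, but the transfer step as you have written it has a gap. You assert that applying $U$ to a special $\t{GFlat}$-precover $0\to K\to G\to Z(M)\to 0$ produces a special $\lcm{R}$-precover of $M$. The half $U(G)\in\lcm{R}$ is fine: write $G$ as a direct limit of finitely presented Gorenstein projectives over $R\ltimes\Omega$, each of which is maximal Cohen--Macaulay there and hence over $R$ after restriction, so $U(G)\in\lcm{R}$. But $U(K)\in\lcm{R}^{\perp}$ is not automatic. The adjunction available is $\Hom_{R}(N,U(-))\simeq\Hom_{R\ltimes\Omega}(N\otimes_{R}(R\ltimes\Omega),-)$, and this extends to $\t{Ext}^{1}$ only when $R\ltimes\Omega\simeq R\oplus\Omega$ is flat as an $R$-module, i.e.\ when $\Omega$ is projective, which forces $R$ Gorenstein. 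So in general there is no direct passage from $\t{Ext}_{R\ltimes\Omega}^{1}(Z(N),K)=0$ to $\t{Ext}_{R}^{1}(N,U(K))=0$.

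The repair is to transfer \emph{deconstructibility} rather than the approximation sequence itself: a $\t{GFlat}$-filtration of $Z(M)$ restricts along the exact functor $U$ to a filtration of $M=UZ(M)$ with successive quotients in $\lcm{R}$, whence $\lcm{R}$ is deconstructible in $\Mod{R}$ and completeness follows from Eklof--Trlifaj applied directly there. Your arguments for hereditariness (via condition~(5) of Theorem~\ref{holm}) and perfectness (via direct-limit closure) are correct as stated. Alternatively, the Kaplansky-class route the paper takes bypasses the trivial extension altogether.
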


\ni
In fact, the conclusion the $\lcm{R}$ is preenveloping follows immediately from the fact that $\lcm{R}$ is the definable subcategory of $\Mod{R}$ generated by $\cm{R}$, since all definable subcategories of $\Mod{R}$ are preenveloping in $\Mod{R}$. 
\ni
We now show the corresponding result for $\hcm{R}$.

\begin{thm}\label{cotorsion}
Let $R$ be any Cohen-Macaulay ring. The pair $$(\hcm{R},\hcm{R}^{\perp})$$ is a perfect hereditary closed cotorsion pair in $\Mod{R}$.
\end{thm}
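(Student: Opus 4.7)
The plan is to establish the four asserted properties of $(\hcm{R},\hcm{R}^{\perp})$ in the order: closed, hereditary, cotorsion pair, perfect. The closed property is immediate since $\hcm{R}$ is a definable subcategory, hence closed under direct limits. Both the extension closure of $\hcm{R}$ and its closure under kernels of epimorphisms come from two single-line applications of the depth lemma \ref{dl}: for a short exact sequence $0\ra L\ra M\ra N\ra 0$, the first inequality there gives $\t{E-dp}(M)\ge \min\{\t{E-dp}(L),\t{E-dp}(N)\}\ge d$ whenever $L,N\in\hcm{R}$, while the second gives $\t{E-dp}(L)\ge \min\{\t{E-dp}(M),\t{E-dp}(N)+1\}\ge d$ whenever $M,N\in\hcm{R}$, producing extension closure and hereditary respectively.

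For the cotorsion pair together with completeness, I would mirror Holm's argument for \ref{lcmpair}. The strategy is to exhibit a set $S$ of $R$-modules such that $S^{\perp}=\hcm{R}^{\perp}$; by the Eklof--Trlifaj theorem this would force the cotorsion pair $({}^{\perp}(S^{\perp}),S^{\perp})$ generated by $S$ to be complete, and one still has to check that its left class coincides with $\hcm{R}$. The natural first guess for $S$ is the family of the first $d$ syzygies $k,\Omega k,\ldots,\Omega^{d-1}k$ from a projective resolution of the residue field, whose $\t{Ext}^{1}$-orthogonal is $\{M:\t{Ext}^{i}(k,M)=0\text{ for }1\le i\le d\}$. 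This is close to but not exactly $\hcm{R}$, which asks for this vanishing in the range $0\le i\le d-1$: one has to discard the spurious $\t{Ext}^{d}$ condition and install the missing $\Hom_{R}(k,M)=0$ condition. Bridging this mismatch, probably by augmenting $S$ with a piece of an injective coresolution of $k$ so as to encode $\Hom_{R}(k,-)=0$ through a genuine $\t{Ext}^{1}$ vanishing, is the delicate point.

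With the cotorsion pair shown to be complete, the perfect property follows from a theorem of Enochs: a complete cotorsion pair whose left class is closed under direct limits is perfect, so $\hcm{R}$ is covering and $\hcm{R}^{\perp}$ is enveloping. Closure of $\hcm{R}$ under direct limits is already in hand.

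The main obstacle is the cotorsion pair/completeness step, specifically producing an explicit generating set and verifying ${}^{\perp}(S^{\perp})=\hcm{R}$. A cleaner alternative would be to invoke the general principle that every deconstructible class of $R$-modules is the left half of a complete cotorsion pair, and argue that $\hcm{R}$, being both definable and closed under extensions, is deconstructible. This sidesteps the need to produce $S$ by hand and reduces the problem to a structural verification of deconstructibility for $\hcm{R}$, after which the definable (hence direct-limit-closed) structure upgrades completeness to perfection automatically.
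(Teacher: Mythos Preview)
Your verifications of the closed property, extension closure, and hereditary closure via the depth lemma are all correct and more explicit than the paper, which does not spell these out. The difficulty, as you recognise, is the cotorsion pair and completeness.

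Your syzygy approach does not work as written. The modules $k,\Omega k,\ldots,\Omega^{d-1}k$ each have Ext-depth strictly less than $d$, so none of them lie in $\hcm{R}$; there is thus no reason for $S^{\perp}$ to equal $\hcm{R}^{\perp}$, and indeed $S^{\perp}=\{M:\t{Ext}_{R}^{i}(k,M)=0\t{ for }1\le i\le d\}$ is neither $\hcm{R}$ nor $\hcm{R}^{\perp}$. Encoding $\Hom_{R}(k,-)=0$ as an $\t{Ext}^{1}$-vanishing against a module that itself lies in $\hcm{R}$ is not straightforward, so the ``augment $S$'' repair is not obviously available.

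Your deconstructibility alternative is the right instinct and is essentially the route the paper takes, but more concretely. The key ingredient you are missing is this: definability gives closure under pure submodules \emph{and} pure quotients, and combined with the Löwenheim--Skolem-type fact that every module contains a small pure submodule through any prescribed element, this makes $\hcm{R}$ a Kaplansky class. The paper then invokes a theorem of Enochs and L\'opez-Ramos: any extension-closed Kaplansky class containing the projectives and closed under direct limits is the left half of a perfect cotorsion pair. This single citation delivers the cotorsion pair, completeness, and perfection simultaneously, so your separate Enochs upgrade from completeness to perfection is subsumed. The argument is short precisely because it never produces an explicit generating set; definability does all the structural work.
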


\ni
Before proving this, recall that a class $\mc{F}\subset \Mod{R}$ is a \ti{Kaplansky class} if there is a cardinal $\lambda$ such that for every module $M\in\mc{F}$ and $x\in M$ there is a submodule $N\subset M$ such that $x\in N\subset M$, both $N,M/N\in\mc{F}$ and $\vert N\vert \leq \lambda$.

\begin{proof}[Proof of \ref{cotorsion}]
It is clear that $\hcm{R}$ is extension closed and contains all the projective $R$-modules. Therefore by {\cite[Theorem 2.8]{kap}}, the result holds if $\hcm{R}$ is a Kaplansky class. If $\lambda> \t{card}(R)+\aleph_{0}$, then for every $M\in\hcm{R}$ and $x \in M$, there is a pure submodule $N$ of $M$ such that $x\in N\subset M$ with $\t{card}(N)\leq\lambda$. Since $\hcm{R}$ is definable, it follows that both $N$ and $M/N$ are both in $\hcm{R}$; in particular, $\hcm{R}$ is a Kaplansky class. 
\end{proof}
\ni
The above theorem was already known in \cite{duality}. Indeed, combining \cite[1.9]{duality} and \cite[Thm. 3.1]{duality} also yields the result. In fact, the proof of the above, combined with the results, enables a partial extension of \ref{lcmpair}. To show this, we prove a much more general result.

\begin{prop}
Let $R$ be a coherent ring and $\mc{C}$ a class of finitely presented right $R$-modules containing $R$. Then $(\rlim \mc{C},(\rlim\mc{C})^{\perp})$ is a perfect cotorsion pair in $\t{Mod-}R$.
\end{prop}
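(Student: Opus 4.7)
The plan is to replicate the Kaplansky-class argument used for Theorem \ref{cotorsion}, namely to verify that the class $\rlim\mc{C}$ satisfies the hypotheses of \cite[Theorem 2.8]{kap}: it contains the projective modules, is closed under extensions, and is a Kaplansky class. Containment of the projectives is immediate, since $R\in\mc{C}\subseteq\rlim\mc{C}$ and $\rlim\mc{C}$ is plainly closed under direct sums, so every free, and hence every projective, right $R$-module lies in $\rlim\mc{C}$.

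For extension closure I would take a short exact sequence $0\to A\to B\to C\to 0$ with $A,C\in\rlim\mc{C}$ and exploit the coherence of $R$: the category of finitely presented right $R$-modules is abelian, so writing $C=\rlim_{j}C_{j}$ with $C_{j}\in\mc{C}$ and pulling back produces short exact sequences $0\to A\to B_{j}\to C_{j}\to 0$ with $B\simeq\rlim B_{j}$. Since direct limits commute with pushouts and $A$ is itself a direct limit from $\mc{C}$, each $B_{j}$ can be exhibited as a direct limit of extensions that already lie in $\mc{C}$ (using that finitely presented kernels and cokernels of maps between objects of $\mc{C}$ remain finitely presented), so after reindexing $B$ is rewritten as an object of $\rlim\mc{C}$.

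For the Kaplansky property I would choose a cardinal $\lambda$ strictly exceeding $\t{card}(R)+\aleph_{0}$ and bounding the cardinalities of a chosen set of representatives of $\mc{C}$. Given $M\in\rlim\mc{C}$ presented as $\rlim_{i\in I} C_{i}$ and an element $x\in M$, I would inductively build a cofinal subsystem indexed by some $J\subseteq I$ whose colimit is a submodule $N\subseteq M$ containing $x$, of cardinality at most $\lambda$, chosen compatibly with the direct limit structure so that $M/N$ is presented as the colimit of the quotients $C_{i}/N\cap C_{i}$ over $i\in I\setminus J$, keeping $M/N$ in $\rlim\mc{C}$. Coherence is essential here, since it guarantees that the finitely presented images and cokernels used in assembling $N$ and $M/N$ stay within the finitely presented modules.

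The main obstacle will be this last point: ensuring that both $N$ and $M/N$ live in $\rlim\mc{C}$, rather than merely producing a small submodule of the correct size. An arbitrary small submodule need not have a compatible direct limit presentation; the recursion must interleave choices of finitely presented pieces from the system defining $M$ with finitely presented submodules generated by the accumulating data, and this is precisely where coherence of $R$ does the work. Once the Kaplansky property is in hand, \cite[Theorem 2.8]{kap} delivers the perfect cotorsion pair at once.
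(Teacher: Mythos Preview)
Your overall strategy matches the paper's---verify the hypotheses of \cite[Theorem 2.8]{kap}---but both of your two nontrivial verifications have a genuine gap, and in each case the paper sidesteps the difficulty by importing a structural result rather than arguing directly.

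\textbf{Extension closure.} Your pullback argument produces extensions $0\to A_{k}\to E_{jk}\to C_{j}\to 0$ with $A_{k},C_{j}\in\mc{C}$, and over a coherent ring each $E_{jk}$ is indeed finitely presented. But $\mc{C}$ is merely \emph{some} class of finitely presented modules containing $R$; it is not assumed to be closed under extensions, so there is no reason for $E_{jk}$ to lie in $\mc{C}$, and your sentence ``extensions that already lie in $\mc{C}$'' does not follow from the parenthetical about finitely presented kernels and cokernels. The paper instead invokes \cite[Thm.~4.5.6]{trlifaj}: over a coherent ring one has $\rlim\mc{C}={}^{\top}(\mc{C}^{\top})$, and any Tor-orthogonal class is extension closed by the long exact sequence. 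This is where coherence is actually used.

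\textbf{Kaplansky property.} Your inductive construction aims to present $M/N$ as a direct limit of quotients $C_{i}/(N\cap C_{i})$, but again these quotients, while finitely presented, have no reason to belong to $\mc{C}$. The paper avoids any hands-on construction: by \cite[2.3, Thm.~3.1]{duality} the class $\rlim\mc{C}$ is closed under pure submodules and pure quotients, so one simply takes $N$ to be a pure submodule of $M$ of cardinality at most $\lambda$ containing the given element (such pure submodules always exist once $\lambda>\t{card}(R)+\aleph_{0}$). Then $N$ and $M/N$ lie in $\rlim\mc{C}$ automatically, exactly as in the proof of Theorem \ref{cotorsion}. The point is that the required closure properties hold for $\rlim\mc{C}$, not for $\mc{C}$ itself, and are obtained from the literature rather than built by hand.
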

\begin{proof}
By \cite[2.3]{duality} and \cite[Thm. 3.1]{duality}, the class $\rlim \mc{C}$ is closed under pure submodules and pure quotients. The proof of \ref{cotorsion} only required closure of pure submodules and pure quotients, so it follows that $\rlim\cm{R}$ is a Kaplansky class containing all the projective $R$-modules. Since $R$ is coherent, we may apply \cite[Thm. 4.5.6]{trlifaj} which shows $\rlim \cm{C} = \,^{\top}(\mc{C}^{\top})$, where 
$$\mc{C}^{\top} = \{M\in R\t{-Mod}:\t{Tor}_{1}^{R}(C,M)=0 \t{ for all }C\in\mc{C}\},$$ 
and we similarly define $\,^{\top}(\mc{C}^{\top})$. In particular, the class $\rlim \mc{C}$ is an extension closed Kaplansky class containing the projective $R$-modules. We therefore may apply {\cite[Theorem 2.8]{kap}} to deduce the result.
\end{proof}
\begin{cor}
If $R$ is any Cohen-Macaulay ring, then $(\lcm{R},\lcm{R}^{\perp})$ is a perfect cotorsion pair.
\qed
\end{cor}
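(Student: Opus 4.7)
The plan is to invoke the preceding proposition directly with the choice $\mc{C}:=\cm{R}$, reducing the corollary to a verification of hypotheses. First I would observe that a Cohen-Macaulay ring is by convention Noetherian and hence coherent, so the ambient coherence assumption comes for free. Next I would check that $\cm{R}$ is a class of finitely presented $R$-modules: maximal Cohen-Macaulay modules are finitely generated by definition, and over a Noetherian ring every finitely generated module is finitely presented. Finally, $R$ itself belongs to $\cm{R}$ precisely because $R$ is Cohen-Macaulay as a module over itself. These three observations exhaust the hypotheses of the preceding proposition.

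Having checked them, I would simply apply the proposition to conclude that $(\rlim \cm{R},(\rlim\cm{R})^{\perp})$ is a perfect cotorsion pair in $\Mod{R}$, which is exactly the statement of the corollary since $\lcm{R}=\rlim\cm{R}$ by definition. There is no meaningful obstacle, since the real work — showing via \cite[2.3]{duality} and \cite[Thm. 3.1]{duality} that $\lcm{R}$ is closed under pure submodules and pure quotients, upgrading this to the Kaplansky property as in the proof of \ref{cotorsion}, and then feeding the result into \cite[Theorem 2.8]{kap} — has already been carried out in the proof of the preceding proposition.

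The point worth emphasising in the write-up is the scope of the improvement over \ref{lcmpair}: no canonical module is assumed here, so this strictly generalises the existence of the cotorsion pair part of Holm's theorem, at the cost of not asserting hereditariness in that generality (since the argument via Kaplansky classes does not in itself yield closure of $\lcm{R}$ under kernels of epimorphisms). Given the brevity of the deduction, I would present it as a one-line invocation of the proposition followed by the remark about the canonical module hypothesis being redundant.
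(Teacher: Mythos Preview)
Your proposal is correct and matches the paper's approach exactly: the paper gives no proof beyond \qed, treating the corollary as an immediate specialisation of the preceding proposition with $\mc{C}=\cm{R}$, and your verification of the hypotheses (Noetherian $\Rightarrow$ coherent, finitely generated $\Rightarrow$ finitely presented, $R\in\cm{R}$) is precisely what is implicit in that \qed. Your remark about losing hereditariness in this generality is also in the paper, appearing as the sentence immediately following the corollary.
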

\ni
Unlike in \ref{lcmpair}, we cannot deduce that the above cotorsion pair is hereditary over an arbitrary Cohen-Macaulay ring.
\\
\\
Every definable subcategory of $\Mod{R}$ is covering and preenveloping, but \ref{cotorsion} enables us to take special $\hcm{R}$-precovers and special $\hcm{R}^{\perp}$-preenvelopes. Since the class is special precovering, we can take minimal left resolutions of any $R$-module in the obvious way.

\begin{defn}
Let $M$ be an $R$-module, we will let $\t{L-dim}(M)$ denote the minimal length of a left $\hcm{R}$-resolution.
\end{defn}

\begin{lem}
Let $M$ be an $R$-module. Then $\t{E-dp}(M)+L\t{-dim}(M)\geq \t{dim }R$.
\end{lem}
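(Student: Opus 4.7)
The plan is to prove the inequality by reducing it to a repeated application of the depth lemma (\ref{dl}) along the short exact sequences that arise from breaking up a minimal left $\hcm{R}$-resolution of $M$. If $L\t{-dim}(M)=\infty$ the inequality is trivial, so we may assume $L\t{-dim}(M)=n<\infty$ and aim to show $\t{E-dp}(M)\geq d-n$.

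First, I would fix a left $\hcm{R}$-resolution
$$0\ra G_{n}\ra G_{n-1}\ra \cdots \ra G_{1}\ra G_{0}\ra M \ra 0$$
of length $n$, which exists thanks to the special precovering property given by \ref{cotorsion}. I would then break this complex into short exact sequences in the standard way: set $K_{0}=M$, $K_{i+1}=\ker(G_{i}\tra K_{i})$ for $0\leq i\leq n-1$, and note that $K_{n}=G_{n}\in\hcm{R}$. This produces short exact sequences
$$0\ra K_{i+1}\ra G_{i}\ra K_{i}\ra 0, \qquad 0\leq i\leq n-1,$$
with $G_{i}\in\hcm{R}$, so $\t{E-dp}(G_{i})\geq d$ for all $i$, and $\t{E-dp}(K_{n})\geq d$.

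Next, I would apply the third inequality of the depth lemma \ref{dl} (with $\mf{a}=\mf{m}$) to each short exact sequence, which gives
$$\t{E-dp}(K_{i})\geq \min\{\t{E-dp}(K_{i+1})-1,\ \t{E-dp}(G_{i})\}.$$
Running a downward induction starting from $\t{E-dp}(K_{n})\geq d$ and using $\t{E-dp}(G_{i})\geq d$ at each step, one obtains $\t{E-dp}(K_{i})\geq d-(n-i)$; in particular $\t{E-dp}(M)=\t{E-dp}(K_{0})\geq d-n$. Rearranging yields the desired inequality $\t{E-dp}(M)+L\t{-dim}(M)\geq \t{dim}\,R$.

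There is no real obstacle here: the argument is a routine induction, and the only thing one needs is that a finite left $\hcm{R}$-resolution exists, which is exactly what the special $\hcm{R}$-precovering statement of \ref{cotorsion} provides. The argument also shows that each syzygy $K_{i}$ has Ext-depth at least $d-(n-i)$, which may be of independent use when comparing $L\t{-dim}$ with other homological invariants.
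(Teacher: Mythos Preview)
Your proof is correct and follows essentially the same approach as the paper: break a finite $\hcm{R}$-resolution into short exact sequences and apply the depth lemma (\ref{dl}) iteratively to push the Ext-depth bound down from $d$ to $d-n$. Your bookkeeping with the syzygies $K_{i}$ is in fact a bit cleaner than the paper's, which unnecessarily separates the even- and odd-indexed short exact sequences, but the underlying argument is identical.
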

\begin{proof}
Let us assume that $L\t{-dim}(M)=n$ with $0<n\leq d$, else there is nothing to prove. Then there is an exact sequence
$$\mc{L}: 0 \ra C_{n}\xra{\phi_{n}} C_{n-1}\xra{\phi_{n-1}}\cdots \ra C_{1}\xra{\phi_{1}}C_{0}\xra{\phi_{0}}M\ra 0$$
with each $C_{i}\in\hcm{R}$. Since $M \simeq \t{Im }\phi_{0} \simeq C_{0}/\t{Ker }\phi_{0} \simeq \t{Coker }\phi_{1}$, there is a short exact sequence $0 \ra \t{Im }\phi_{1}\ra C_{0} \ra \t{Coker }\phi_{1}$. Moreover, $\t{Im }\phi_{1} \simeq C_{1}/\t{Ker }\phi_{1}$. Yet by exactness, $\t{Ker }\phi_{1} \simeq \t{Im }\phi_{2} \simeq C_{2}/\t{Ker }\phi_{2} \simeq \t{Coker }\phi_{3}$, so one obtains a second short exact sequence $0\ra \t{Coker }\phi_{3} \ra C_{1} \ra \t{Im }\phi_{1}\ra 0$. Continuing this process, one can decompose $\mc{L}$ into a collection of short exact sequences 
$$ 0 \ra \t{Im }\phi_{2k+1}\ra C_{2k}\ra \t{Coker }\phi_{2k+1}\ra 0$$
$$0 \ra \t{Coker }\phi_{2k+3} \ra C_{2k+1} \ra \t{Im }\phi_{2k+1} \ra 0,$$
where $M = \t{Coker }\phi_{1}$. If $n=2m$ is even, then $\phi_{n+1}$ is the zero map and consequently there is an isomorphism $C_{2m}\simeq \t{Coker }\phi_{2m+1}$. There are therefore $n$ exact sequences to consider. By repeated application of the depth lemma (\ref{dl}), we see $\t{E-depth}(\t{Im }\phi_{n-l})\geq d-l$ for all $l<n$, and so $\t{E-depth}(\t{Im }\phi_{1})\geq d+1-n$. By applying the depth lemma one final time to the exact sequence
$$0 \ra \t{Im }\phi_{1}\ra C_{0}\ra M\ra 0,$$
we see that $\t{E-depth}(M)\geq d-n$, hence $\t{E-depth}(M)+n\geq d$. In the case when $n$ is odd, an almost identical argument yields the same inequality.
\end{proof}

\begin{cor}
Let $M\in\Mod{R}$ be of finite Ext-depth, then $L\t{-dim}(M)\geq \t{T-codp}(M)$.
\end{cor}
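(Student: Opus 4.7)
The plan is to combine the inequality established in the preceding lemma with the Ext-depth/Tor-codepth inequality recorded in Proposition \ref{strook}. Since $M$ has finite Ext-depth, the lemma just proved gives
\[
L\text{-dim}(M) \geq \text{dim}\,R - \text{E-dp}(M),
\]
so the content of the corollary is that the right-hand side is bounded below by $\text{T-codp}(M)$.

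To handle this, I would first note that the finiteness of $\text{E-dp}(M)$ activates Proposition \ref{strook} (with the ideal taken to be $\mf{m}$), which delivers
\[
\text{E-dp}(M) + \text{T-codp}(M) \leq \text{dim}\,R.
\]
Rearranging yields $\text{dim}\,R - \text{E-dp}(M) \geq \text{T-codp}(M)$, and chaining this with the displayed bound above gives the desired inequality.

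The only subtlety worth flagging is that Proposition \ref{strook} is stated for complete Noetherian local rings, so strictly speaking the corollary is most naturally read under that hypothesis; in the general case one can either pass to the $\mf{m}$-adic completion $\widehat{R}$ (noting that Ext-depth, Tor-codepth and the $L$-dimension behave well under the faithfully flat extension $R\to\widehat{R}$ since $k$ is finitely presented) and then apply \ref{strook} there, or simply invoke the completion implicitly as is standard for results phrased in terms of these invariants. No obstacle beyond this book-keeping is expected, as the argument is a two-line chain of inequalities.
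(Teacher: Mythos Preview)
Your proof is correct and is essentially identical to the paper's own argument: both combine the inequality $\t{E-dp}(M)+L\t{-dim}(M)\geq \t{dim }R$ from the preceding lemma with the bound $\t{E-dp}(M)+\t{T-codp}(M)\leq \t{dim }R$ from Proposition~\ref{strook}. Your remark about the completeness hypothesis in \ref{strook} is a fair observation that the paper glosses over.
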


\begin{proof}
By the above, we have an inequality $\t{E-dp}(M)+L\t{-dim}(M)\geq \t{dim }R$, but we also know that $\t{E-dp}(M)+\t{T-codp}\leq \t{dim }R$ by \ref{strook}. Combing these inequalities gives the result.
\end{proof}
\ni
The categories $\hcm{R}$ and $\hcm{R}_{\infty}$ also have some interesting properties in their own right, independently of their relationship to $\cm{R}$.

\begin{prop}
Let $R$ be a Cohen-Macauly ring of dimension at least one. Then $\hcm{R}$ and $\hcm{R}_{\infty}$ are closed under injective hulls. Consequently both categories have enough injectives.
\end{prop}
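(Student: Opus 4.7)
The plan is to invoke the Matlis decomposition of injective modules over the Noetherian ring $R$: every injective $R$-module splits as $\bigoplus_{\mf{p} \in \spec R} E(R/\mf{p})^{(I_{\mf{p}})}$. I will argue that for $M \in \hcm{R}$ no copy of $E(k) = E(R/\mf{m})$ appears in $E(M)$, and that for every prime $\mf{p} \neq \mf{m}$ the local cohomology modules $H_{\mf{m}}^{i}(E(R/\mf{p}))$ vanish for all $i \geq 0$. Combined with the fact that $H_{\mf{m}}^{i}(-)$ commutes with arbitrary direct sums, this will force $E(M) \in \hcm{R}_{\infty} \subseteq \hcm{R}$, which is enough to establish both closure statements simultaneously.

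First I would observe that since $d \geq 1$ and $M \in \hcm{R}$, the vanishing $H_{\mf{m}}^{0}(M) = \Gamma_{\mf{m}}(M) = 0$ implies $\Hom_{R}(k, M) = 0$. The multiplicity of $E(k)$ in the Matlis decomposition of $E(M)$ is precisely $\dim_{k} \Hom_{R}(k, M)$, so no such summand appears and we may write $E(M) = \bigoplus_{\mf{p} \neq \mf{m}} E(R/\mf{p})^{(I_{\mf{p}})}$.

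Next, for $\mf{p} \neq \mf{m}$ I would verify that $H_{\mf{m}}^{i}(E(R/\mf{p})) = 0$ for all $i \geq 0$. The case $i \geq 1$ is automatic because $E(R/\mf{p})$ is injective. For $i = 0$, the strict inclusion $\mf{p} \subsetneq \mf{m}$ supplies some $s \in \mf{m} \setminus \mf{p}$; since $E(R/\mf{p})$ naturally carries an $R_{\mf{p}}$-module structure, $s$ acts invertibly on it, so no nonzero element can be annihilated by a power of $\mf{m}$ and hence $\Gamma_{\mf{m}}(E(R/\mf{p})) = 0$. Because $H_{\mf{m}}^{i}(-) = \rlim_{t} \t{Ext}_{R}^{i}(R/\mf{m}^{t}, -)$ and each $R/\mf{m}^{t}$ is finitely presented over the Noetherian ring $R$, the functor commutes with arbitrary direct sums; applied to the decomposition above this gives $H_{\mf{m}}^{i}(E(M)) = 0$ for all $i \geq 0$, i.e. $E(M) \in \hcm{R}_{\infty}$. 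The statement for $\hcm{R}_{\infty}$ then drops out of $\hcm{R}_{\infty} \subseteq \hcm{R}$.

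For the ``enough injectives'' conclusion, I would simply note that the canonical inclusion $M \hra E(M)$ realises $M$ as a submodule of an object of the category, and $E(M)$, being injective in $\Mod{R}$, is \emph{a fortiori} injective for the exact structure inherited on $\hcm{R}$ and $\hcm{R}_{\infty}$. The only step requiring any care is the multiplicity count in the Matlis decomposition, but this is standard; I do not anticipate a genuine obstacle.
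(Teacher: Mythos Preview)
Your proposal is correct and follows essentially the same route as the paper: both arguments use the Matlis decomposition of $E(M)$, observe that the multiplicity of $E(k)$ is governed by $\Hom_{R}(k,M)$ and hence vanishes when $\t{E-dp}(M)>0$, and then check that each remaining summand $E(R/\mf{p})$ with $\mf{p}\neq\mf{m}$ has infinite Ext-depth. The only cosmetic difference is that you phrase the last step via the $R_{\mf{p}}$-module structure and $\Gamma_{\mf{m}}$, whereas the paper notes that any map $k\to E(R/\mf{p})$ would factor through $E(k)$; these are equivalent standard arguments.
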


\begin{proof}
If $\mf{p}\neq \mf{m}$ is a prime ideal, then $\Hom_{R}(k,E(R/\mf{p}))=0$. Indeed, any morphism $k\ra E(R/\mf{p})$ factors through $E(k)$, but $\Hom_{R}(E(k),E(R/\mf{p}))=0$ by the assumption that $\mf{p}\neq \mf{m}$. By Matlis's results on injective modules, if $M$ is an $R$-module its injective hull is of the form
$$E(M)\simeq \bigoplus_{\mf{p}\in\spec{R}}E(R/\mf{p})^{(X_{\mf{p}})},$$
where $\t{card}(X_{\mf{p}}) = \t{dim}_{k(\mf{p})}\Hom_{R}(R/\mf{p},M)_{\mf{p}}$. In particular, if $\t{E-dp}(M)>0$ then $X_{\mf{m}}=0$ so $\Hom_{R}(k,E(M))=0$ by the above reasoning. Consequently $E(M)$ has infinite Ext-depth. Therefore $E(M)\in \hcm{R}_{\infty}$ for any $M$ with $\t{E-dp}(M)>0$, so $\hcm{R}$ and $\hcm{R}_{\infty}$ are closed under injective hulls.
\end{proof}

\ni
Let us now turn our attention to inverse limits. Firstly, let's see how the inverse limit closure relates to $\lcm{R}$.

\begin{lem}
Let $R$ be a complete Cohen-Macaulay ring. Then the inverse limit closure of $\cm{R}$, denoted $\llim\cm{R}$, is a subcategory of $\lcm{R}$.
\end{lem}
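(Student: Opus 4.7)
The plan is to exploit the canonical dual $(-)^{*}=\Hom_{R}(-,\Omega)$ to realise any inverse limit of objects of $\cm{R}$ as the canonical dual of a direct limit of objects of $\cm{R}$, and then appeal to \ref{hom}. Since $R$ is a complete Cohen-Macaulay ring it admits a canonical module $\Omega$, so this strategy is available.

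Let $M = \llim_{i} M_{i}$ for an inverse system $(M_{i})$ in $\cm{R}$ with structure maps $f_{ji}\colon M_{j}\to M_{i}$. First I would set $N_{i}:=M_{i}^{*}$; by \ref{ld}(1) each $N_{i}$ again lies in $\cm{R}$, and the duals $f_{ji}^{*}\colon N_{i}\to N_{j}$ exhibit $(N_{i})$ as a direct system in $\cm{R}$ indexed by the same directed set. Using reflexivity, \ref{ld}(3), we have $M_{i}\simeq N_{i}^{*}$ naturally in $i$, and the Hom-colimit adjunction then yields
$$M \ \simeq\ \llim_{i}\Hom_{R}(N_{i},\Omega) \ \simeq\ \Hom_{R}\!\left(\rlim_{i} N_{i},\,\Omega\right).$$
By definition $\rlim_{i}N_{i}\in\lcm{R}$, and \ref{hom} tells us that $\Hom_{R}(-,\Omega)$ restricts to an endofunctor of $\lcm{R}$, so $M\in\lcm{R}$, as required.

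Once this duality trick is in place the argument is essentially formal, so I do not anticipate any real obstacle. The only point that requires a moment's care is verifying that the isomorphism $M \simeq \llim_{i} N_{i}^{*}$ is compatible with the original structure maps of $(M_{i})$, but this is immediate from the naturality of the reflexivity isomorphism $M_{i}\simeq M_{i}^{**}$ on $\cm{R}$, which ensures that the biduality identifies the transition maps $f_{ji}$ with $(f_{ji}^{*})^{*}$.
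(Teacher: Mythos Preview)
Your argument is correct. Both your proof and the paper's follow the same overall scheme: dualize the inverse system of maximal Cohen--Macaulay modules to obtain a direct system, take its direct limit inside a class known to be closed under direct limits, and then dualize back to recover $M$ inside $\lcm{R}$. The genuine difference lies in the choice of duality. You use the canonical dual $(-)^{*}=\Hom_{R}(-,\Omega)$: the direct system $(N_{i})=(M_{i}^{*})$ lies in $\cm{R}$ by \ref{ld}, its limit lies in $\lcm{R}$ by definition, and you then invoke \ref{hom} to conclude that $\Hom_{R}(\rlim N_{i},\Omega)\in\lcm{R}$. The paper instead uses the Matlis dual $(-)^{\vee}=\Hom_{R}(-,E(k))$: the direct system $(M_{i}^{\vee})$ lands in the dual definable subcategory $\lcm{R}^{d}$, which is closed under direct limits because it is definable, and dualizing back sends $\lcm{R}^{d}$ to $\lcm{R}$ by the very definition of the dual definable category. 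Your route has the virtue of reusing \ref{hom} and staying entirely within the canonical-module framework that dominates the paper; the paper's route avoids appealing to \ref{hom} (whose proof already required local duality) and instead relies only on Matlis reflexivity of finitely generated modules over a complete ring together with the formal machinery of dual definable categories.
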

\begin{proof}
Let $(M_{i}, f_{i}^{j})_{I}$ be an inverse system of modules in $\cm{R}$ with inverse limit $M$. Since each $M_{i}$ is finitely presented is Matlis reflexive, so the inverse system can be realised as the dual of a directed system $(N_{i},g_{i}^{j})_{I}$ where $N_{i}=M_{i}^{\vee}$ and likewise for $g_{i}^{j}$. Since $\lcm{R}$ consists of all modules vanishing on the set $\{\t{Tor}_{1}(R/(\bm{x}),-): \bm{x} \ti{ is an $R$-sequence}\}$, we may apply \ref{relations} to show that its dual definable category consists of all modules vanishing on the set of functors 
$$X=\{\t{Ext}_{R}^{1}(R/(\bm{x}),-):\bm{x} \t{ is an $R$-sequence}\}.$$
Since each $N_{i}$ is in this dual definable subcategory, so is the directed limit of the system $(N_{i},g_{i}^{j})_{I}$, which we denote by $N$. Then $\Hom_{R}(N,E)\in\lcm{R}$ by definition of the dual definable category, but
$$\Hom_{R}(N,E) = \Hom_{R}(\rlim N_{i},E)\simeq \llim \Hom_{R}(N_{i},E)\llim M_{i} = M.$$
Consequently $M\in \lcm{R}$, which shows the claim.
\end{proof}
\ni
In particular, we see that $\rlim \llim\cm{R}\subseteq \lcm{R}$. One may wonder if it is possible to swap the direct and inverse limits and reach the same conclusion, namely that $\lcm{R}$ is closed under inverse limits. In general, definable subcategories are not closed under inverse limits - for instance, over a Noetherian ring the category of injective $R$-modules is definable, but its inverse limit closure is the entire module category, see \cite{bergman}. 
We now show that, with an assumption on Krull dimension, $\hcm{R}$ is never closed under inverse limits.

\begin{lem}
Let $\t{dim }R\geq 3$ be a Cohen-Macaulay ring. Then $\hcm{R}$ is not closed under inverse limits.
\end{lem}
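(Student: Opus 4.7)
The plan is to exploit the fact that kernels of morphisms are themselves inverse limits: the kernel of $\phi:M\to N$ in $\Mod{R}$ is the pullback of $\phi$ along $0\to N$, which is a small inverse limit of a diagram whose objects all lie in $\hcm{R}$ whenever $M,N\in\hcm{R}$. Thus it suffices to exhibit a morphism $\phi:M\to N$ with $M,N\in\hcm{R}$ but $\ker\phi\notin\hcm{R}$, i.e.\ to show $\hcm{R}$ is not closed under kernels.

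Since $d=\t{dim}\,R\geq 3$ and $R$ is Cohen-Macaulay, I would choose an $R$-regular sequence $x_{1},x_{2},x_{3}\in\mf{m}$ and consider the map
$$\phi:R^{3}\to R,\qquad (r_{1},r_{2},r_{3})\mapsto x_{1}r_{1}+x_{2}r_{2}+x_{3}r_{3}.$$
Both $R$ and $R^{3}$ are finite free modules, hence maximal Cohen-Macaulay, so they lie in $\hcm{R}$. Writing $\mf{a}=(x_{1},x_{2},x_{3})$, one has $\t{im}\,\phi=\mf{a}$ and $\t{coker}\,\phi=R/\mf{a}$, the latter of depth $d-3$ since $x_{1},x_{2},x_{3}$ is an $R$-regular sequence.

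The central calculation is then that $\t{dp}\,\ker\phi=d-1$. This is obtained via two applications of the depth lemma \ref{dl}: first to $0\to\mf{a}\to R\to R/\mf{a}\to 0$, using $\t{dp}\,R=d$ and $\t{dp}\,R/\mf{a}=d-3$, pins down $\t{dp}\,\mf{a}=d-2$; then applying it to $0\to\ker\phi\to R^{3}\to\mf{a}\to 0$ gives $\t{dp}\,\ker\phi=d-1<d$. Since $\ker\phi$ is finitely generated, depth and Ext-depth coincide, so $\ker\phi\notin\hcm{R}$. I do not anticipate any serious obstacle here; the only delicate point is bookkeeping, and the hypothesis $d\geq 3$ enters in exactly the right way: it is needed so that the cokernel has depth at most $d-3$, which is precisely what forces two successive drops in the depth-lemma bounds and lands $\ker\phi$ below the Cohen-Macaulay threshold.
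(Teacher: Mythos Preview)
Your depth calculation is correct: with $x_{1},x_{2},x_{3}$ a regular sequence, one does get $\t{dp}\,\mf{a}=d-2$ and then $\t{dp}\,\ker\phi=d-1<d$, so $\ker\phi\notin\hcm{R}$ while $R^{3},R\in\hcm{R}$. The difficulty is the opening sentence. In this paper ``inverse limit'' means the limit of an inverse system indexed by a directed poset (compare the surrounding discussion of $\llim\cm{R}$, of towers, of the Mittag--Leffler condition, and the use of Bergman's paper). A pullback, or an equalizer, is a finite non-cofiltered limit; it is not an inverse limit in this sense. So exhibiting $\ker\phi$ as a pullback of objects of $\hcm{R}$ does not yet show that $\hcm{R}$ fails to be closed under inverse limits --- only that it fails to be closed under kernels, which is a weaker statement (and indeed already fails when $d=2$, where the lemma is not claimed).

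The paper's proof runs into the very same situation and resolves it with an extra step you are missing. It too produces a module $M\notin\hcm{R}$ as the kernel of a map between two objects of $\hcm{R}$ (there, $E(M)\to E^{1}$ coming from an injective resolution). The point is that Bergman's \cite[Cor.~11]{bergman} then converts any such kernel into the inverse limit of a genuine countable tower built from direct sums of the two objects; this uses only that $\hcm{R}$ is closed under direct sums. Once you append that step to your argument, it becomes a valid proof, and structurally the same as the paper's --- your choice of kernel (a Koszul syzygy) is just more explicit than the paper's injective-hull construction. Without the Bergman step, the argument has a genuine gap.
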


\begin{proof}
Since $\t{dim }R\geq 3$, we can choose an $R$-module $M$ such that $2\leq \t{E-dp}(M)<\t{dim }R$. We will show that $M$ can be realised as an inverse system of $R$-modules in $\hcm{R}$. Consider the start of a minimal injective resolution of $M$
\begin{equation}\label{mininj}
0 \ra M\ra E(M)\ra E^{1}
\end{equation}
where $E^{1}$ is the injective hull of $\t{coker}(M\ra E(M))$. By the choice of $M$, it is clear that $E(M)\in\hcm{R}$. Applying the depth lemma (\ref{dl}) to the short exact sequence $0\ra M\ra E(M)\ra E(M)/M\ra 0$ shows that $\t{E-dp}(E(M)/M)\geq 1$ so $E^{1}\in\hcm{R}$. Consequently (\ref{mininj}) is a short exact sequence in $\hcm{R}$. Since $\hcm{R}$ is closed under direct sums, one can apply \cite[Cor. 11]{bergman}, which shows that $M$ can be realised as an inverse limit of a countable inverse system of modules in $\hcm{R}$.
\end{proof}
\ni
In fact, one can draw a more general conclusion from Bergman's corollary - namely that any class that is both closed under direct summands and is special preenveloping is not closed under inverse limits. Clearly any special preenveloping definable subcategory satisfies this property. Since $E(k)$ is not a member of $\hcm{R}$ for any Cohen-Macaulay ring $R$, we cannot say that $\hcm{R}$ is special preenveloping in $\Mod{R}$. However, it does contain sufficiently many injective $R$-modules to start to form injective resolutions, from which one can apply Bergman's result, as done in the above proof.

\begin{exmp}
Let $R= k[[x,y,z,w]]$ be a four-dimensional regular local ring. Then the module $R/(x)$ has Ext-depth equal to three and is not Cohen-Macaulay as an $R$-module. Consequently we can use the above result to obtain $R/(x)$ as an inverse limit of modules in $\hcm{R}$.
\end{exmp}

\ni
There are, however, certain inverse systems in both $\lcm{R}$ and $\hcm{R}$ whose inverse limits remain in their respective category.
\begin{defn}{\cite[3.5]{trlifaj}}
A sequence of $R$-modules and homomorphisms of the form
$$ 
\mc{T}: \cdots\ra T_{i+1}\xra{t_{i}} T_{i} \xra{t_{i-1}}\cdots\xra{t_{1}}T_{1}\xra{t_{0}}T_{0}$$
is called a \ti{tower}.
\end{defn}
\ni
Clearly given a tower $\mc{T}$ one can form an inverse system $\mc{T}'=\{(T_{i},\phi_{i}^{j})\}_{\mbb{N}}$, where $\phi_{i}^{i}=\t{Id}_{T_{i}}$ and for any $i<j$ the morphism $\phi_{i}^{j}:T_{j}\ra T_{i}$ is given by the composition $\phi_{i}^{j}=t_{i}\circ \cdots \circ t_{j-1}$. Following the construction given in \cite[§3.1]{trlifaj}, given any tower $\mc{T}$ there is a map 
$$\Phi_{\mc{T}}:\prod_{i\in\mbb{N}} T_{i}\ra \prod_{i\in\mbb{N}}T_{i}$$
whose kernel is the inverse limit of the inverse system $\mc{T}'$. In particular, if $\t{Coker }\Phi_{\mc{T}}=0$, then there is a short exact sequence of $R$-modules
\begin{equation}\label{inlim}
0\ra \llim T_{i} \ra \prod_{i\in\mbb{N}}T_{i} \xra{\Phi_{\mc{T}}} \prod_{i\in\mbb{N}}T_{i} \ra 0.
\end{equation}
\ni
In order to give a situation when $\t{Coker }\Phi_{\mc{T}}=0$, we recall the Mittag-Leffler condition.
\begin{defn}\label{ml}
Let $(M_{i},f_{ij})_{I}$ be an inverse system of $R$-modules. We say the system satisfies the \ti{Mittag-Leffler condition} if for any $i\in I$ there is a $j\geq i$ such that for any $k\geq j$ we have $\t{im }f_{ik} = \t{im }f_{ij}$.
\end{defn}
\begin{lem}{\cite[3.6]{trlifaj}}
If $\mc{T}'$ satisfies the Mittag-Leffler condition, then $\t{Coker }\Phi_{\mc{T}}=0$.
\end{lem}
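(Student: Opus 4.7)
The plan is to first unpack the map $\Phi_{\mc{T}}$. Given that $\t{ker }\Phi_{\mc{T}} = \llim T_i$ sits in the standard way inside the product, the map must be the classical ``shift'' $\Phi_{\mc{T}}((x_i)_i) = (x_i - t_i(x_{i+1}))_i$, as in the usual computation of $\llim^{1}$ of an $\mbb{N}$-indexed tower. The claim $\t{Coker }\Phi_{\mc{T}} = 0$ then amounts to showing that for every $(y_i) \in \prod_i T_i$ one can find $(x_i) \in \prod_i T_i$ satisfying $x_i = y_i + t_i(x_{i+1})$ for all $i$. Iterating this relation suggests the formal solution
\begin{equation*}
x_i = \sum_{k \geq 0}\phi_i^{i+k}(y_{i+k}),\qquad \phi_i^i=\t{id},\ \phi_i^{i+k}=t_i\circ\cdots\circ t_{i+k-1},
\end{equation*}
whose partial sums need not eventually stabilise, so one must use the Mittag--Leffler hypothesis to produce a genuine $(x_i)$.

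Next, I would exploit \ref{ml} applied to $\mc{T}'$: for each $i$, choose $N_i \geq i$ so that the descending chain $V_i := \phi_i^k(T_k)$ is constant in $k$ for $k \geq N_i$. A direct check using the definition of $V_i$ shows that $t_i$ maps $V_{i+1}$ onto $V_i$; indeed, an element $v = \phi_i^m(u) \in V_i$ with $m$ chosen larger than both $N_i$ and $N_{i+1}$ is equal to $t_i(\phi_{i+1}^m(u))$, and $\phi_{i+1}^m(u)$ lies in $V_{i+1}$. In particular, the restricted tower $(V_i, t_i|_{V_{i+1}})$ has surjective transition maps.

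With these surjections in hand, the construction of the preimage proceeds by taking, for each $i$, a sufficiently deep partial sum $s_i^{(n_i)} := \sum_{k=0}^{n_i - 1}\phi_i^{i+k}(y_{i+k})$ of the formal series and correcting it by an appropriately chosen element $v_i \in V_i$; a short computation shows that when $n_i = n_{i+1}+1$ one has $s_i^{(n_i)} - t_i(s_{i+1}^{(n_{i+1})}) = y_i$, so the corrections must satisfy $v_i = t_i(v_{i+1})$, which can be solved coherently level by level using the surjectivity $t_i\colon V_{i+1}\twoheadrightarrow V_i$. The main obstacle is the bookkeeping: a single choice $n_i = n_{i+1}+1$ for all $i$ cannot be made in $\mbb{N}$, so one must instead allow the depths $n_i$ to vary with $i$ and absorb the resulting tail $\sum_{k=n_{i+1}+1}^{n_i - 1}\phi_i^{i+k}(y_{i+k})$ into the correction $v_i$, using that for $n_{i+1}$ large enough this tail lies in $V_i$ by the stability of images. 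This is precisely the content that Mittag--Leffler provides, and once the indexing is arranged, the identity $\Phi_{\mc{T}}((x_i)) = (y_i)$ follows immediately, yielding the desired vanishing of the cokernel.
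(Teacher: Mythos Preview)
The paper does not give its own proof of this lemma; it is simply quoted from \cite[3.6]{trlifaj}, so there is nothing in the paper to compare your argument against. Your identification of $\Phi_{\mc{T}}$ as the shift map $(x_i)\mapsto (x_i-t_i(x_{i+1}))$ and your overall strategy --- pass to the stable-image subtower $(V_i)$, on which the transition maps are surjective, and then correct a truncated partial sum by an element of $V_i$ chosen recursively --- is exactly the classical proof that $\llim^{1}$ of a countable Mittag--Leffler tower vanishes.

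One small bookkeeping point: the relation $n_i=n_{i+1}+1$ and the tail you wrote, $\sum_{k=n_{i+1}+1}^{n_i-1}\phi_i^{i+k}(y_{i+k})$, both presuppose $n_i>n_{i+1}$, which, as you note, is impossible for all $i$. In the actual construction one takes the $n_i$ \emph{increasing} (say $n_i\geq N_i-i$), and then
\[
s_i^{(n_i)}-t_i\bigl(s_{i+1}^{(n_{i+1})}\bigr)=y_i-\sum_{j=n_i}^{n_{i+1}}\phi_i^{\,i+j}(y_{i+j}),
\]
so the tail appears with the opposite sign and the summation range runs the other way. Since each term in that tail lies in $V_i$ by the Mittag--Leffler choice of $n_i$, the recursive equation $v_i-t_i(v_{i+1})=w_i$ on the surjective subtower $(V_i)$ can be solved exactly as you indicate, and setting $x_i=s_i^{(n_i)}+v_i$ finishes the argument. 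Apart from this orientation slip, your sketch is correct and complete.
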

\ni
We can now prove the following result without much difficulty.
\begin{lem}
Let $\mc{T}$ be a tower in $\lcm{R}$ (resp. $\hcm{R}$). If the associated inverse system $\mc{T}'$ satisfies the Mittag-Leffler condition, then $\llim T_{i} \in\lcm{R}$ (resp. $\hcm{R}$).
\end{lem}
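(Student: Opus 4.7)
The plan is to build on the short exact sequence (\ref{inlim}) and use the characterising functors of each definable subcategory. First, by the preceding lemma, the Mittag--Leffler assumption on $\mc{T}'$ ensures $\t{Coker}\,\Phi_{\mc{T}}=0$, so we have the short exact sequence
$$0\ra \llim T_{i} \ra \prod_{i\in\mbb{N}}T_{i} \xra{\Phi_{\mc{T}}} \prod_{i\in\mbb{N}}T_{i} \ra 0$$
sitting inside $\Mod{R}$. Both $\hcm{R}$ and $\lcm{R}$ are definable, hence closed under arbitrary direct products, so the middle and right-hand terms lie in the relevant category; it remains to argue that the kernel does too.

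For $\hcm{R}$, I would apply $\t{Ext}_{R}^{\bullet}(k,-)$ to the above short exact sequence. Since $\t{Ext}_{R}^{i}(k,\prod T_{j})=0$ for all $i<d$, the long exact sequence yields $\t{Ext}_{R}^{i}(k,\llim T_{j})=0$ for $1\leq i<d$, while the $i=0$ case follows from the injection $\Hom_{R}(k,\llim T_{j})\hra \Hom_{R}(k,\prod T_{j})=0$ (the statement being trivial when $d=0$). Hence $\llim T_{j}\in\hcm{R}$.

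For $\lcm{R}$ I would use Holm's characterisation (5) of Theorem \ref{holm}: membership in $\lcm{R}$ is detected by the vanishing of $\t{Tor}_{i}^{R}(R/(\bm{x}),-)$ for every $R$-sequence $\bm{x}$ and every $i\geq 1$. Applying $\t{Tor}_{\bullet}^{R}(R/(\bm{x}),-)$ to the short exact sequence, the outer terms $\t{Tor}_{i+1}^{R}(R/(\bm{x}),\prod T_{j})$ and $\t{Tor}_{i}^{R}(R/(\bm{x}),\prod T_{j})$ vanish for $i\geq 1$, forcing the middle term $\t{Tor}_{i}^{R}(R/(\bm{x}),\llim T_{j})$ to vanish. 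Applying Theorem \ref{holm} in the reverse direction then places $\llim T_{j}$ in $\lcm{R}$.

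There is no real obstacle here: the content is entirely in the lemma quoted from \cite{trlifaj} producing the short exact sequence (\ref{inlim}), after which closure under products together with a single long exact sequence of derived functors does the work. The only mild subtlety is using the stronger version (5) of Holm's characterisation rather than the single-Ext version (4), since the long exact sequence argument for the kernel needs vanishing one degree higher in the right-hand term.
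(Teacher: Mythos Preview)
Your proposal is correct and follows essentially the same route as the paper: use the short exact sequence (\ref{inlim}) guaranteed by the Mittag--Leffler hypothesis, invoke closure under products to place $\prod T_{i}$ in the relevant class, and then run a long exact sequence of the defining functors to conclude that the kernel $\llim T_{i}$ also lies there. The paper only writes out the $\lcm{R}$ case and is terser about which part of Theorem \ref{holm} is being used; your observation that one really needs condition (5) rather than (4), because the long exact sequence forces vanishing of $\t{Tor}_{2}$ on the right-hand term, is a valid clarification of the same argument.
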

\begin{proof}
We will prove the result for $\lcm{R}$. By our assumptions, the sequence (\ref{inlim}) is exact. Since $\lcm{R}$ is definable, $\prod_{i\in\mbb{N}}T_{i}$ is also in $\lcm{R}$. If $\bm{x}$ is an $R$-sequence, applying the functor $R/(\bm{x})\otimes -$ to (\ref{inlim}) shows that $\t{Tor}_{1}(R/(\bm{x}), \llim T_{i})=0$, showing $\llim T_{i}\in\lcm{R}$ by \ref{holm}.
\end{proof}
\ni
Related to the Mittag-leffler conditions on an inverse system of modules is the notion of a Mittag-Leffler module.

\begin{defn}
Let $\mc{Q}$ be a class of $R$-modules. We say that an $R$-module $M$ is $\ti{$\mc{Q}$-Mittag-Leffler}$ if, for every collection $\{N_{i}\}_{i\in I}$ in $\mc{Q}$, the canonical morphism
$$M\otimes\prod_{i\in I} N_{i}\ra \prod_{i}(M\otimes N_{i})$$
is injective. If $\mc{Q}=\Mod{R}$, we say that an $R$-module is \ti{Mittag-Leffler}.
\end{defn}
\ni
$\mc{Q}$-Mittag-Leffler modules were considered from a Model theoretic perspective by P. Rothmaler in \cite{Rothmaler} and from a more algebraic viewpoint by D. Herbera and L. Angeleri-H\"{u}gel and in \cite{ahh}.
\begin{defn}
\begin{enumerate}
\item
Let $B$ be an $R$-module. A directed system $(M_{i},f_{ij})_{I}$ of $R$-modules is said to be \ti{$B$-stationary} if the inverse system $(\Hom_{R}(M_{i},B), \Hom_{R}(f_{ij},B))_{I}$ satisfies the Mittag-Leffler condition \ref{ml}. 
\item
If $M$ is an $R$-module, we say it is $B$-stationary if there is a directed system $(M_{i},f_{ij})$ of finitely presented $R$-modules with $M=\rlim M_{i}$ such that $(M_{i},f_{ij})$ is $B$-stationary. 
\item 
If $\mc{B}$ is a class of $R$-modules, we say $M$ is $\mc{B}$-\ti{stationary} if it is $B$-stationary for all $B\in\mc{B}$.
\end{enumerate}
\end{defn}

\begin{prop}\label{lml}
Let $R$ be a Gorenstein ring, then every module of finite injective dimension is $\lcm{R}$-Mittag-Leffler. Moreover, a module is $\lcm{R}$-Mittag-Leffler if and only if it is $\t{GInj-}R$-stationary.
\end{prop}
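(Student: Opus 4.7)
My plan is to reduce both claims to the general framework connecting Mittag-Leffler modules with stationary systems developed in \cite{ahh}, exploiting two identifications available over a Gorenstein ring $R$: the category $\lcm{R}$ coincides with $\t{GFlat}(R)$ (by \ref{holm} together with \ref{props}), and its dual definable subcategory $\lcm{R}^{d}$ is $\t{GInj}(R)$, as noted after \ref{hom2}. With these in hand, the biconditional reduces to showing that $\lcm{R}$-Mittag-Leffler coincides with $\lcm{R}^{d}$-stationary, and the finite injective dimension statement reduces to showing every such module is $\t{GInj}(R)$-stationary.

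For the equivalence, I would first invoke the general fact that $\mc{Q}$-Mittag-Leffler and $\mc{Q}$-stationary coincide for any class $\mc{Q}$, so it suffices to compare stationarity relative to $\lcm{R}$ with stationarity relative to $\t{GInj}(R)$. Matlis duality combined with the Ext--Tor interchange of \ref{relations} then does the work: for any directed system $(M_{i}, f_{ij})$ of finitely presented modules and any $N \in \lcm{R}$, the inverse system $(\Hom_{R}(M_{i}, N^{\vee\vee}))$ is Matlis-dual to the directed system $(N^{\vee} \otimes_{R} M_{i})$, and since $N^{\vee} \in \t{GInj}(R)$ one can transfer Mittag-Leffler conditions between the relevant inverse systems of Hom-groups. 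Because stationarity is detected only by pure-injective test modules and Matlis duality pairs pure-injectives in $\lcm{R}$ with those in $\t{GInj}(R)$, the two stationarity conditions coincide.

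For the first claim, it then suffices to show that $M$ with $\t{id}_{R} M < \infty$ is $\t{GInj}(R)$-stationary. Over a Gorenstein ring, finite injective dimension is equivalent to finite projective dimension, so $M$ admits a finite projective resolution; the defining property of Gorenstein injective modules then yields $\t{Ext}_{R}^{i}(M, B) = 0$ for every $B \in \t{GInj}(R)$ and all $i \geq 1$. Writing $M = \rlim M_{i}$ as a directed system of finitely presented submodules and applying the Roos-type exact sequence
$$0 \to \llim\nolimits^{1} \Hom_{R}(M_{i}, B) \to \t{Ext}_{R}^{1}(M, B) \to \llim \t{Ext}_{R}^{1}(M_{i}, B),$$
the Ext-vanishing forces $\llim\nolimits^{1} \Hom_{R}(M_{i}, B) = 0$, which is precisely the Mittag-Leffler condition on $(\Hom_{R}(M_{i}, B))$ and gives the required stationarity.

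The principal obstacle I anticipate is the duality step in the equivalence: products and coproducts do not interchange cleanly under Matlis duality, so translating the tensor-Mittag-Leffler condition on products over $\lcm{R}$ into the Hom-stationary condition on directed systems over $\t{GInj}(R)$ requires a careful pure-injectivity argument. Isolating this step via the characterization of stationarity through pure-injective envelopes, and invoking that the Matlis dual of a pure-injective in $\lcm{R}$ is pure-injective in $\t{GInj}(R)$, should allow one to sidestep the direct manipulation of products and coproducts under duality.
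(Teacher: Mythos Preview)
Your argument for the first claim has a genuine gap. The step ``$\llim^{1}\Hom_{R}(M_{i},B)=0$, hence $(\Hom_{R}(M_{i},B))$ satisfies the Mittag-Leffler condition'' is false: vanishing of $\llim^{1}$ is strictly weaker than the Mittag-Leffler condition, even for countable towers of abelian groups. Only the converse implication holds. So the Ext-vanishing you correctly extract from finite projective dimension, together with the Roos sequence, does not deliver $B$-stationarity. There is also a scope issue: the Roos-type sequence you invoke is valid for countable towers, while the directed system of finitely generated submodules of $M$ need not be countable, so $\llim^{1}$ in your sense is not even the right object.

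Your treatment of the equivalence also starts from an imprecise claim. The assertion that ``$\mc{Q}$-Mittag-Leffler and $\mc{Q}$-stationary coincide for any class $\mc{Q}$'' conflates a tensor condition with a Hom condition; the correct relationship passes through character or Matlis duality, so $\mc{Q}$-Mittag-Leffler matches stationarity relative to a \emph{dual} class, not to $\mc{Q}$ itself. You appear to recognise this in the next paragraph, but the duality argument you sketch remains heuristic, and as you yourself note, the product/coproduct mismatch under Matlis duality is not resolved.

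The paper's proof bypasses both difficulties by working structurally. It records that over a Gorenstein ring there is a cotorsion pair $(\mc{I},\t{GInj-}R)$ with $\mc{I}$ the modules of finite injective (equivalently projective) dimension, observes that this pair is of \emph{finite type} and that $\t{GInj-}R$ is closed under direct limits, and identifies $\lcm{R}$ as the Tor-orthogonal $\mc{I}^{\top}$. Both assertions of the proposition are then read off directly from \cite[Prop.~9.2, Thm.~9.5]{ahh}, which for a finite-type cotorsion pair $(\mc{A},\mc{B})$ identify the $\mc{A}^{\top}$-Mittag-Leffler modules with the $\mc{B}$-stationary ones and place every member of $\mc{A}$ among them. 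This absorbs the delicate duality step you flagged into a single citable result and avoids any $\llim^{1}$ computation.
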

\begin{proof}
As $R$ is Gorenstein, there is a cotorsion pair $(\mc{I},\t{GInj-}R)$, where $\mc{I}$ denotes the class of modules of finite injective dimension and $\t{GInj-}R$ is the category of Gorenstein injective modules (see \cite[§10.1]{rha} for more details). Since $R$ is Gorenstein, the class $\t{GInj-}R$ is closed under direct limits \cite[Lemma 11.1.2]{rha}, and therefore direct sums. As $R$ is Gorenstein, the class $\mc{I}$ is equal to the class of all modules of finite projective dimension, so we can conclude from \cite[Prop. 4.1]{bh} that the cotorsion pair $(\mc{I},\t{GInj-}R)$ is of finite type. Moreover, $M\in \lcm{R}$ if and only if $\t{Tor}_{1}(I,M)=0$ for every $I\in\mc{I}$. The result then follows immediately from \cite[Prop. 9.2]{ahh} and \cite[Theorem 9.5]{ahh}.
\end{proof}
\begin{rmk}
The above proof does not use any property of $R$ apart from it having finite injective dimension over itself. Consequently the above result holds for any Iwanaga-Gorenstein ring if one replaces $\cm{R}$ with the class of finitely generated Gorenstein projective $R$-modules.
\end{rmk}

\section{The dimension one case}
\ni
For this section, we will let $(R,\mf{m},k)$ denote a one-dimensional Cohen-Macaulay ring. Since $\t{dim }R=1$, we have $\lcm{R}=\hcm{R}$ even when $R$ does not admit a canonical module. Since $\lcm{R}=\{M\in\Mod{R}:\Hom_{R}(k,M)=0\}$, it is clear that $\lcm{R}$ is closed under submodules. Several interesting phenomena occur in this situation that do not occur in higher dimensional cases, for example, since $\Hom_{R}(k,-)$ preserves inverse limits, we immediately get the following result.

\begin{lem}
For $R$ as above, $\lcm{R}$ is closed under inverse limits.
\end{lem}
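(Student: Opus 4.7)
The plan is to use the explicit characterisation of $\lcm{R}$ in dimension one together with the fact that $\Hom_R(k,-)$ commutes with inverse limits in its second argument. Since $\dim R = 1$, the category of interest is
\[
\lcm{R} = \hcm{R} = \{M \in \Mod{R} : \Hom_R(k,M) = 0\},
\]
so membership is witnessed by the vanishing of a single additive functor, rather than by a family of $\Ext^{i}$ or $\Tor_i$ conditions as in higher dimensions.

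First I would take an arbitrary inverse system $(M_i, f_{ij})_I$ of modules in $\lcm{R}$ and let $M = \llim M_i$. The point is then to compute $\Hom_R(k, M)$ and show it vanishes. Since $\Hom_R(k,-)$ is a representable (hence limit-preserving) functor on $\Mod{R}$, there is a natural isomorphism
\[
\Hom_R(k, M) = \Hom_R\!\left(k, \llim M_i\right) \simeq \llim \Hom_R(k, M_i).
\]
By hypothesis each $\Hom_R(k, M_i) = 0$, so the inverse limit on the right is zero, and therefore $\Hom_R(k, M) = 0$, giving $M \in \lcm{R}$.

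There is no real obstacle here; the entire content is the coincidence of $\lcm{R}$ with $\hcm{R}$ in dimension one (Theorem \ref{sep}), which reduces the defining condition to the vanishing of a single Hom functor, combined with the elementary fact that Hom out of a fixed module is continuous in its second argument. This last fact does not require any finite generation or Noetherian hypothesis on $k$, so no subtle commutation of limits with Ext is needed, which is precisely what fails in higher dimensions where one would have to control $\Ext^{i}_R(k,-)$ for $i \geq 1$ along inverse limits.
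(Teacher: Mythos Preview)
Your proof is correct and follows exactly the same idea as the paper: in dimension one the defining condition collapses to $\Hom_R(k,M)=0$, and since $\Hom_R(k,-)$ preserves inverse limits the result is immediate. The paper states this in a single line before the lemma; your version simply spells out the computation.
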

\ni
As we previously showed, $\lcm{R}$ is in general not closed under inverse limits when $\t{dim }R\geq 2$. Recall that $\lcm{R}_{\infty}$ consists of all $R$-modules of infinite Ext-depth. In this situation $$\lcm{R}_{\infty} = \{M\in\Mod{R}:\Hom_{R}(k,M)=0=\t{Ext}_{R}^{1}(k,M)\}.$$
Recall from \ref{serre} that $\lcm{R}_{\infty}$ sits inside $\lcm{R}$ in the manner of a Serre subcategory. In dimension one, this inclusion enables us to prove the following result, which is the main result of the section.

\begin{thm}\label{abelian}
Let $\t{dim }R=1$, then $\lcm{R}_{\infty}$ is a Grothendieck abelian category.
\end{thm}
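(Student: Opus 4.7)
The plan is to identify $\lcm{R}_{\infty}$ explicitly with a module category over a localisation of $R$, from which the Grothendieck abelian structure will follow automatically.

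Since $R$ is Cohen-Macaulay of dimension one, there is a non-zerodivisor $x\in\mf{m}$; the quotient $R/(x)$ is then zero-dimensional, so $\sqrt{(x)}=\mf{m}$. For any $R$-module $M$, membership in $\lcm{R}_{\infty}$ is equivalent to $H_{\mf{m}}^{i}(M)=0$ for every $i\geq 0$. Grothendieck's vanishing theorem kills these automatically for $i\geq 2$. Because local cohomology depends only on the radical of the defining ideal, the two remaining groups coincide with $H_{(x)}^{0}(M)$ and $H_{(x)}^{1}(M)$, which the \v{C}ech complex $M\ra M[1/x]$ identifies with the kernel and cokernel of the localisation map respectively.

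Therefore $M\in\lcm{R}_{\infty}$ if and only if multiplication by $x$ is a bijection on $M$, equivalently, $M$ admits a (necessarily unique) $R[1/x]$-module structure extending the $R$-action. As any $R$-linear map between two such modules is automatically $R[1/x]$-linear, restriction of scalars along the localisation homomorphism $R\ra R[1/x]$ produces an equivalence of categories
\[
\Mod{R[1/x]}\simeq \lcm{R}_{\infty}.
\]
Since any module category over a commutative ring is Grothendieck abelian, the conclusion is immediate.

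The only non-routine step is the translation of the infinite-Ext-depth condition into the bijectivity of multiplication by $x$, and this is immediate from Grothendieck vanishing combined with the \v{C}ech description of local cohomology. Once this is in hand, the Grothendieck structure, the generator, and the exactness of filtered colimits are all inherited free of charge from $\Mod{R[1/x]}$; there is no genuine obstacle, and in particular no ad hoc generator argument is required because the identification produces one (the ring $R[1/x]$ itself).
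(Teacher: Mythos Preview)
Your argument is correct and takes a genuinely different route from the paper. The paper verifies the Grothendieck axioms directly inside $\Mod{R}$: closure under kernels and cokernels is deduced from the Serre-type Lemma~\ref{serre} together with the depth lemma, exactness of filtered colimits is inherited from $\Mod{R}$, and a generator is produced by an appeal to the Downward L\"{o}wenheim--Skolem theorem for definable subcategories. You instead identify $\lcm{R}_{\infty}$ outright with $\Mod{R[1/x]}$ via the \v{C}ech description of $H_{(x)}^{0}$ and $H_{(x)}^{1}$, so that every Grothendieck axiom, including the existence of a generator (namely $R[1/x]$ itself), comes for free. Your approach yields a strictly stronger conclusion and a more transparent picture of what the infinite-depth modules actually are; the paper's argument, by contrast, stays within the model-theoretic framework of definable categories and would adapt more readily to settings where no such explicit localisation description is available.
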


\begin{proof}
By \cite[Prop. 5.92]{rot}, in order to show $\lcm{R}_{\infty}$ is abelian it suffices to show that $\lcm{R}_{\infty}$ is closed under direct sums, contains a zero object and if $f:M\ra N$ is a morphism in $\lcm{R}_{\infty}$, then both $\t{Ker }f$ and $\t{Coker }f$ lie in $\lcm{R}_{\infty}$. Clearly the first two hold. Suppose $f:M\ra N$ is a morphism in $\lcm{R}_{\infty}$. There are two associated short exact sequences $\mc{S}_{1}:0\ra \t{Ker }f\ra M \ra \t{Im }f \ra 0$ and $\mc{S}_{2}: \t{Im }f\ra N\ra \t{Coker }f\ra 0$, from which it is clear that both $\t{Ker }f$ and $\t{Im }f$ are elements of $\lcm{R}$. Applying \ref{serre} to $\mc{S}_{1}$ we see that $\t{Ker }f$ and $\t{Im }f$ are both in $\lcm{R}_{\infty}$. Applying $\Hom_{R}(k,-)$ to $\mc{S}_{2}$ shows that $\t{Coker }f$ also has infinite Ext-depth. This shows that $\lcm{R}_{\infty}$ is closed under kernels and cokernels, so is an Abelian category. Since $\lcm{R}_{\infty}$ is definable, it is closed under coproducts, so is cocomplete, and products, so is complete. Suppose 
$$\{0\ra L_{i}\ra M_{i}\ra N_{i}\ra 0\}_{I}$$
 is a directed system of short exact sequence with terms in $\lcm{R}_{\infty}$, then it is also a directed system in $\Mod{R}$ whose direct limit is the short exact sequence $\mbb{S}:0\ra \rlim L_{i}\ra \rlim M_{i}\ra \rlim N_{i}\ra 0$. Yet all three terms of this exact sequence lie in $\lcm{R}_{\infty}$, so $\mbb{S}$ is actually short exact sequence in $\lcm{R}_{\infty}$. Lastly, we have to show that $\lcm{R}_{\infty}$ contains a generator. Since $\lcm{R}_{\infty}$ is definable, there is a set of objects $\mc{X}$ such that every object in $\lcm{R}_{\infty}$ can be realised as the direct limit of a directed system in $\mc{X}$ (this is a consequence of the Downwards L\"{o}wenheim-Skolem theorem, see \cite[\S 18.1.4]{psl} for more details). The module $G = \oplus_{X\in\mc{X}} X$ acts as a generator for $\lcm{R}_{\infty}$. Indeed, let $M$ be a module in $\lcm{R}_{\infty}$ and $(X_{i},f_{i,j})_{i,j\in I}$ a directed system in $\mc{X}$ with direct limit $M$. By properties of direct limits, there is a pure epimorphism in $\lcm{R}_{\infty}$
$$\bigoplus_{i\in I}X_{i}\ra M.$$
There is then a projection $G^{(I)}\ra \oplus_{I}X_{i}$ and we may compose with $\pi$ to obtain the required surjection $G^{(I)}\ra M$.
\end{proof}
\ni
There is another way $\hcm{R}_{\infty}$ sits inside $\hcm{R}$ which is also specific to the dimension one case. For this, we will need some definitions from exact categories. As both $\hcm{R}$ and $\hcm{R}_{\infty}$ are extension closed, we can view them as exact categories where the exact structure is inherited from $\Mod{R}$. We will say that $L\hra M\tra N$ is a conflation in $\hcm{R}$ if $0\ra L\ra M\ra N\ra 0$ is exact in $\Mod{R}$, and similarly for $\hcm{R}_{\infty}$. We will say that a map $L\ra M$ is an \ti{admissible monomorphism} if it arises in a conflation $L\hra M\tra N$, and we similarly define \ti{admissible epimorphism}. 

\begin{defn}{\cite[Def. 2.15]{tate}}
Let $\mc{D}$ be an exact category. An exact full subcategory $\mc{C}\subset \mc{D}$ is \ti{left filtering} if every morphism $X\ra F$ in $\mc{D}$, with $X\in\mc{C}$, factors through an admissible monomorphism $X\hra F$, with $X'\in\mc{C}$:
$$
\begin{tikzcd}
X \arrow{r} \arrow{dr} & F \\
& X' \arrow[hook]{u}
\end{tikzcd}
$$
\end{defn}

\begin{prop}
When $\t{dim }R=1$, $\hcm{R}_{\infty}$ is left filtering in $\hcm{R}$.
\end{prop}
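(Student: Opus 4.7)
The plan is to take $X' = \t{im}\,f$ itself: given $f : X \ra F$ with $X \in \hcm{R}_{\infty}$ and $F \in \hcm{R}$, one factors $f$ as $X \tra \t{im}\,f \hra F$, and it will suffice to verify that $\t{im}\,f$ lies in $\hcm{R}_{\infty}$ and that the inclusion $\t{im}\,f \hra F$ is an admissible monomorphism in $\hcm{R}$, that is, that $F/\t{im}\,f \in \hcm{R}$.

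First I would exploit the dimension-one hypothesis: $\hcm{R} = \{M : \Hom_{R}(k, M) = 0\}$ is visibly closed under submodules, so both $\ker f$ and $\t{im}\,f$ lie in $\hcm{R}$, and the short exact sequence $0 \ra \ker f \ra X \ra \t{im}\,f \ra 0$ lives entirely in $\hcm{R}$ with its middle term in $\hcm{R}_{\infty}$. Applying the Serre-type property of $\hcm{R}_{\infty}$ inside $\hcm{R}$ (the analogue of \ref{serre} collated in the proposition of the previous section) then forces both $\ker f$ and $\t{im}\,f$ into $\hcm{R}_{\infty}$.

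To handle the quotient, I would apply $\Hom_{R}(k, -)$ to $0 \ra \t{im}\,f \ra F \ra F/\t{im}\,f \ra 0$ and read off the fragment
\[
\Hom_{R}(k, F) \ra \Hom_{R}(k, F/\t{im}\,f) \ra \t{Ext}^{1}_{R}(k, \t{im}\,f),
\]
whose outer terms vanish because $F \in \hcm{R}$ and $\t{im}\,f \in \hcm{R}_{\infty}$ respectively. Hence $\Hom_{R}(k, F/\t{im}\,f) = 0$, so $F/\t{im}\,f \in \hcm{R}$ and the inclusion is admissible, delivering the required factorisation of $f$ through $\t{im}\,f \in \hcm{R}_{\infty}$.

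The one subtle point, rather than a genuine obstacle, is ensuring that $\t{im}\,f$ (and not merely $\ker f$) inherits infinite Ext-depth from $X$; this is exactly what the Serre-type property guarantees in dimension one, and it is what allows the naive choice $X' = \t{im}\,f$ to succeed without any further enlargement inside $F$.
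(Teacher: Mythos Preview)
Your proof is correct and essentially identical to the paper's: both factor $f$ through $\t{im}\,f$, use submodule closure of $\hcm{R}$ in dimension one together with the Serre-type property (Lemma~\ref{serre}) to place $\t{im}\,f$ in $\hcm{R}_{\infty}$, and then use the vanishing of $\Hom_{R}(k,F)$ and $\t{Ext}^{1}_{R}(k,\t{im}\,f)$ to show the cokernel lies in $\hcm{R}$. The only cosmetic difference is that you write out the relevant fragment of the long exact sequence explicitly where the paper invokes the depth lemma.
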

\begin{proof}
Let $f:X\ra Y$ be a morphism in $\hcm{R}$ with $X\in\hcm{R}_{\infty}$. Then there are two short exact sequences of $R$-modules
$$0 \ra \t{Ker}(f)\ra X \ra \t{Im}(f)\ra 0$$
$$0 \ra \t{Im}(f)\ra Y \ra \t{Coker}(f)\ra 0.$$
Since $\hcm{R}$ is closed under submodules, we see that $\t{Ker}(f)$ and $\t{Im}(f)$ lie in $\hcm{R}$, and therefore $\t{Ker}(f)\hra X\tra \t{Im}(f)$ is a conflation in $\hcm{R}_{\infty}$ by applying \ref{serre}. In particular, $\t{Ext}^{1}(k,\t{Im}(f))=0$ in $\Mod{R}$. Applying the functor $\Hom(k,-)$ to the second exact sequence and then applying the depth lemma (\ref{dl}) shows that $\t{Coker}(f)\in
\hcm{R}$, so $\t{Im}(f)\hra Y \tra \t{Coker}(f)$ is a conflation in $\hcm{R}$. Therefore $f:X\ra Y$ through the admissible monomorphism $\t{Im}(f)\hra Y$.
\end{proof}

\begin{rmk}
The assumption of $\t{dim }R=1$ is necessary for this result: if $\t{dim }R>1$, then in general $\hcm{R}$ will not be closed under submodules, so one cannot usually form the conflation $\t{Ker}(f)\hra X\tra \t{Im}(f)$ in $\hcm{R}$, let alone $\hcm{R}_{\infty}$.
\end{rmk}

\end{document}